\newtheorem{theorem}{Theorem}[section]
\newtheorem{lemma}[theorem]{Lemma}
\newtheorem{corollary}[theorem]{Corollary}
\newtheorem{proposition}[theorem]{Proposition}
\newtheorem{definition}[theorem]{Definition}
\newtheorem{remark}[theorem]{Remark}
\theoremstyle{definition}
\newtheorem{example}[theorem]{Example}
\newcommand{\beqa}{\begin{eqnarray*}}
\newcommand{\eeqa}{\end{eqnarray*}}
\DeclareMathOperator*{\supp}{supp}
\DeclareMathOperator*{\id}{id}
\DeclareMathOperator*{\Sp}{Sp}
\DeclareMathOperator*{\Mp}{Mp}
\DeclareMathOperator*{\Sym}{Sym}
\DeclareMathOperator*{\diag}{diag}
\DeclareMathOperator*{\GL}{GL}
\newcommand{\field}[1]{\mathbb{#1}}
\newcommand{\bR}{\field{R}}        
\newcommand{\bN}{\field{N}}        
\newcommand{\bZ}{\field{Z}}        
\newcommand{\bC}{\field{C}}        
\newcommand{\bT}{\field{T}}        %
 \newcommand{\rd}{\mathbb{R}^n}       %
  \newcommand{\rdd}{\mathbb{R}^{2n}} 
\def\la{\lambda}
\def\cF{\mathcal{F}}              
\def\cS{\mathcal{S}}
\def\cD{\mathcal{D}}
\def\cB{\mathcal{B}}
\def\cK{\mathcal{K}}
\def\cA{\mathcal{A}}
\def\cI{\mathcal{I}}
\def\cC{\mathcal{C}}
\def\R{\right)}
\def\<{\left<}
\def\>{\right>}
\def\mv1{M_v^1}
\def\mn{(m,n)}
\def\mn'{(m',n')}
\newcommand{\abs}[1]{\lvert#1\rvert}
\newcommand{\norm}[1]{\lVert#1\rVert}
\def\R{\mathbb{R}}
\def\Ren{\mathbb{R}^n}
\def\f{\varphi}
\def\Sn2{S_{2}(L^{2}(\Ren))}
\def\S1{S_{1}(L^{2}(\Ren))}
\def\sig00{\sigma_{0,0}}
\def\la{\langle}
\def\ra{\rangle}
\newcommand{\Op}{\mathrm{Op}}
\newcommand{\w}{\mathrm{w}}
\begin{document}

\begin{abstract} 

In this work, we investigate the microlocal properties of the evolutions of Schr\"odinger equations using metaplectic Wigner distributions. So far, only restricted classes of metaplectic Wigner distributions, satisfying particular structural properties, have allowed the analysis of microlocal properties. We first extend the microlocal results to all metaplectic Wigner distributions, and examine these findings in the framework of Fourier integral operators  with quadratic phase. Finally, we analyze the phase space concentration of the (cross) Wigner distribution arising from the interaction of two states, with particular attention to interactions generated by certain Schr\"odinger evolutions. These contributions enable a more refined study of the so-called ghost frequencies.
\end{abstract}

\title[On the microlocal phase-space concentration of Wigner distributions associated with Schr\"odinger evolutions]{On the microlocal phase-space concentration of Wigner distributions associated with Schr\"odinger evolutions}

\author{Gianluca Giacchi}
\address{Università della Svizzera Italiana, IDSIA, Dipartimento di Informatica, Via La Santa 1, 6900 Lugano, Switzerland}
\email{gianluca.giacchi@usi.ch}
\author{Davide Tramontana}
\address{Università di Bologna, Dipartimento di matematica, Piazza di Porta San Donato 5, 40126 Bologna, Italy}
\email{davide.tramontana4@unibo.it}

\thanks{}
\subjclass[2020]{35A18, 35J10, 35Q40, 35S10, 70S20, 81S10, 81S30}
\keywords{Wigner wave front set, Wigner distribution, metaplectic operators, Schr\"odinger equations, symplectic deformations}
\maketitle
\tableofcontents                        

\section{Introduction}
Microlocal analysis and time-frequency analysis have long been regarded as two complementary perspectives on phase space analysis, offering distinct yet interconnected approaches and tools to the study of operators and singularities in phase space.
On the one hand, microlocal analysis is based on the notion of wave front set, introduced by H\"ormander with the aim of analyzing the microlocal $\cC^\infty$ and Sobolev singularities of distributions, and the directions in which these singularities occur, cf. \cite[Chapter 8]{hormander1} and \cite[Chapter 18]{hormander3}. Later, in \cite{hormander1495quadratic}, he introduced a different notion of wave front set, denoted by $WF_{\mathrm{iso}}$, tailored to detect Schwartz singularities. These singularities are called {\em isotropic} because the variables $x$ and $\xi$ are treated symmetrically. This concept has been extensively studied by many authors from both pseudodifferential and time-frequency analysis perspectives (see e.g. \cite{cappiellohanarmonic,cappielloFios,MPT2025,PravdaRodinoWahlberg,wahlberg2018propagation,MR4182467}). In the latter context, the notion is commonly referred to as the Gabor wave front set and it is given via the {\em short-time Fourier transform} (STFT) \cite{cordero2020time,grochenig2013foundations}, 
\begin{equation}\label{def.STFTintro}
	V_gf(x,\xi)=\int_{\rd}e^{-2\pi i\xi \cdot y}f(y)\overline{g(y-x)}dy, \qquad f,g\in L^2(\rd), \, (x,\xi)\in\rdd.
\end{equation}
Specifically,
\begin{definition}\label{def.WFG}
Let $f\in \cS'(\rd)$ and $g \in \mathcal{S}(\mathbb{R}^n)\setminus \lbrace 0 \rbrace$. We say that $0\neq X_0\notin WF_G(f)$, the {\em Gabor wave front set} of $f$, if there exists an open cone $\Gamma_{X_0}\subseteq\rdd$ containing $X_0$ such that $V_gf \in L_{v_s}^2(\Gamma_{X_0})$ for every $s\geq0$.
\end{definition}
In \cite{cordero2022wigner}, a further alternative definition of the wave front set is proposed basically by replacing the STFT with the {\em Wigner distribution} \cite{wigner1932quantum}, here normalized as
\begin{equation}\label{defWignerIntro}
	Wf(x,\xi)=\int_{\rd} e^{-2\pi i\xi\cdot y}f\left(x+\frac y 2\right)\overline{f\left(x-\frac y 2\right)}dy, \qquad f\in L^2(\rd).
\end{equation}
The reader may observe, in comparison with \eqref{def.STFTintro}, the absence of the Schwartz window function $g$. Precisely, the {\em Wigner wave front set} of $f \in L^2(\mathbb{R}^n)$ consists of all points $X_0\neq0$ in phase space where $\chi_{\Gamma_{X_0}}Wf$ does not decay faster than the inverse of any polynomial, where again $\Gamma_{X_0}$ is an open cone containing $X_0$, while $\chi_{\Gamma_{X_0}}$ is the characteristic function of $\Gamma_{X_0}$.

On the other hand, this qualitative approach of capturing singularities via wave front sets is reinterpreted by time-frequency analysis, where the STFT and the Wigner distribution are used as tools to quantify phase space concentration and how it changes through the action of evolution operators, pseudodifferential operators and, more generally, Fourier integral operators (FIOs), see \cite{MR4182445,MR4286055,MR3571901,cordero2010time,MR3391896,MR3392649,MR2294795,MR2356420,MR2208883} and the references therein. In the Weyl-Wigner formulation of non-commutative quantum mechanics, the Wigner distribution is modified to encode the geometry induced by a canonical deformation of the Heisenberg algebra  \cite{MR2432025,MR2718929}.
The Wigner distribution is a fundamental tool in both quantum mechanics and signal analysis, providing a powerful phase space representation of states and signals. Yet, in many concrete situations, ranging from quantum evolutions governed by non-standard symplectic forms to time-frequency transforms tailored to specific signal geometries, the classical Wigner distribution must be modified to properly capture the underlying structure \cite{MR2843220,PrataDias2018}. Metaplectic Wigner distributions, recently defined by Cordero and Rodino, provide a fruitful framework encompassing all the main generalizations of the Wigner distribution in the literature. Their contributions also show that the Wigner distribution can effectively be used to study the evolution operators of Schrödinger equations, the so-called Wigner analysis. Ghost frequencies, however, arise naturally from the non-linear nature of the Wigner distribution. Alongside the development of Wigner analysis, the propagation of singularities measured by means of the Wigner distribution has been studied, even in a metaplectic framework. However, only restricted classes of metaplectic Wigner distributions, whose expressions can be derived explicitly, have been considered so far.

Cordero and Rodino introduced {\em metaplectic Wigner distributions} as generalizations of the classical Wigner distribution encompassing many of the most significant and widely used time-frequency representations. To be definite, they use the metaplectic group $\Mp(2n,\bR)$, the double cover of the group $\Sp(2n,\bR)$ of $4n\times4n$ symplectic matrices, see Section \ref{subsec:MOps} below for the detailed definitions. The {\em (cross-)metaplectic Wigner distribution}, or (cross-)$\cA$-Wigner distribution, corresponding to $\hat\cA\in\Mp(2n,\bR)$, is then the time-frequency representation defined by
\begin{equation}
	W_\cA(f,g)=\hat\cA(f\otimes\bar g), \qquad f,g\in L^2(\rd).
\end{equation}
We write $W_\cA f:=W_\cA(f,f)$. Many time–frequency representations that fall within the framework of metaplectic Wigner distributions have been extensively studied from various perspectives over the past decade. We refer the reader to the works \cite{bayer2020linear,cordero2020linear,MR3471302,grochenig2024benedicks,grochenig2025characterization,grochenig2025more,Minh:2024aa,zhang2019uncertainty,zhang2021scaled}, among the aforementioned works and others, though the literature is considerably broader.
Our starting point is \cite{cordero2024wigner}, which ends with a brief discussion of $\cA$-{\em Wigner wave fronts}, focusing on metaplectic Wigner distributions $W_\cA$ satisfying certain structural properties, precisely {\em covariance} and {\em shift-invertibility}, we leave the details to Definitions \ref{def29} and \ref{def292} below, as they are secondary for the aim of this Introduction. 
\begin{definition}
	Let $W_\cA$ be covariant and shift-invertible. Let $f\in L^2(\rd)$. We say that $0\neq X_0\notin WF_\cA(f)$, the {\em $\cA$-Wigner wave front set} of $f$, if there exists an open cone $\Gamma_{X_0}\subseteq\rdd$ containing $X_0$ such that for every integer $s\geq0$,
	\begin{equation}
		\int_{\Gamma_{X_0}}\la X\ra^{2s}|W_\cA f(X)|^2dX<\infty.
	\end{equation}
\end{definition}
Under covariance and shift-invertibility assumptions, $W_\cA$ can be expressed explicitly, allowing to obtain the microlocal inclusion property for Weyl operators with symbols in the H\"ormander class of smooth functions that are bounded along with all of their derivatives, herein denoted as $S^0_{0,0}(\rdd)$, i.e.,
\begin{equation}
\Op^\mathrm{w}(a)f(x)=\int_{\rdd} e^{2\pi i(x-y)\xi}a\Big(\frac{x+y}{2},\xi\Big)f(y)dyd\xi,
\end{equation}
$a\in S^0_{0,0}(\rdd)$.
\begin{theorem}\cite[Theorem 7.6]{cordero2024wigner}\label{thmPartII1}
	Let $W_\cA$ be a shift-invertible and covariant metaplectic Wigner distribution. Then, for every $a\in S^0_{0,0}(\rdd)$ and every $f\in L^2(\rd)$,
	\begin{equation}
		WF_\cA(\Op^\w(a)f)\subseteq WF_\cA(f).
	\end{equation}
\end{theorem}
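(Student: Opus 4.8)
The plan is to reduce the assertion to the classical microlocal inclusion for the Gabor (isotropic) wave front set, and then to recall why the latter holds for Weyl operators with symbol in $S^0_{0,0}(\rdd)$. Since $W_\cA$ is covariant and shift-invertible (Definitions~\ref{def29} and~\ref{def292}), it admits an explicit description: up to a unimodular chirp and a constant, $W_\cA f$ is a short-time Fourier transform of $f$ evaluated at $\mathcal{E}_\cA X$ for a fixed $\mathcal{E}_\cA\in GL(2n,\bR)$ and a window determined by $\cA$. From this one reads off that $WF_\cA(f)=\mathcal{L}_\cA\,WF_G(f)$ for every $f\in L^2(\rd)$, where $\mathcal{L}_\cA\in GL(2n,\bR)$ depends only on $\cA$ (for $W_\cA$ the Wigner distribution this is the identity $WF_W(f)=WF_G(f)$ of \cite{cordero2022wigner}). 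Since $\mathcal{L}_\cA$ is a linear bijection it maps cones to cones, so $WF_\cA(\Op^\w(a)f)\subseteq WF_\cA(f)$ is equivalent to
\begin{equation}
WF_G(\Op^\w(a)f)\subseteq WF_G(f),\qquad a\in S^0_{0,0}(\rdd),\ f\in L^2(\rd);
\end{equation}
here $\Op^\w(a)$ maps $L^2(\rd)$ into itself by the Calder\'on--Vaillancourt theorem, so both wave front sets are well defined.

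\textbf{The Gabor matrix estimate.} The analytic core is an off-diagonal decay bound for the Gabor matrix of $\Op^\w(a)$. Fixing a Gaussian $g\in\cS(\rd)$ with $\|g\|_2=1$ and writing $\Phi_0:=Wg\in\cS(\rdd)$, the standard identity relating Weyl symbols and Gabor matrices reads
\begin{equation}
\big|\langle\Op^\w(a)\pi(\mu)g,\pi(\lambda)g\rangle\big|=\Big|V_{\Phi_0}a\Big(\tfrac{\lambda+\mu}{2},\,J(\lambda-\mu)\Big)\Big|,\qquad\lambda,\mu\in\rdd,
\end{equation}
with $J$ the standard symplectic matrix on $\rdd$. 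For $a\in S^0_{0,0}(\rdd)$, integrating by parts $2N$ times in the defining STFT integral moves the derivatives onto $a$ (which stays bounded) and onto the Schwartz window, yielding $|V_{\Phi_0}a(z,\zeta)|\le C_N\langle\zeta\rangle^{-N}$ \emph{uniformly in} $z$, for every $N\ge0$; since $J$ is orthogonal, $|\langle\Op^\w(a)\pi(\mu)g,\pi(\lambda)g\rangle|\le C_N\langle\lambda-\mu\rangle^{-N}$ for all $N$. Expanding $f=\|g\|_2^{-2}\int_{\rdd}V_gf(\mu)\,\pi(\mu)g\,d\mu$ weakly, applying $\Op^\w(a)$, and taking the STFT then gives $V_g(\Op^\w(a)f)(\lambda)=\int_{\rdd}H_a(\lambda,\mu)\,V_gf(\mu)\,d\mu$ with $|H_a(\lambda,\mu)|\le C_N\langle\lambda-\mu\rangle^{-N}$ for every $N$.

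\textbf{The cone argument and the main obstacle.} Let $0\neq X_0\notin WF_G(f)$ and fix an open cone $\Gamma\ni X_0$ with $V_gf\in L^2_{v_s}(\Gamma)$ for every $s\ge0$; choose a smaller open cone $\Gamma'\ni X_0$ with $\overline{\Gamma'}\cap\mathbb{S}^{2n-1}\subseteq\Gamma$, so that $|\lambda-\mu|\gtrsim|\lambda|+|\mu|$ whenever $\lambda\in\Gamma'$ and $\mu\notin\Gamma$. Fix $s\ge0$. For $\lambda\in\Gamma'$ I split $V_g(\Op^\w(a)f)(\lambda)$ over $\{\mu\in\Gamma\}$ and $\{\mu\notin\Gamma\}$ and choose $N$ large: on the first region, $\langle\lambda\rangle^s\lesssim\langle\lambda-\mu\rangle^s\langle\mu\rangle^s$ together with Young's inequality $L^1\ast L^2\hookrightarrow L^2$ (using $\langle\cdot\rangle^s\langle\cdot\rangle^{-N}\in L^1(\rdd)$ and $\langle\cdot\rangle^s\chi_\Gamma V_gf\in L^2(\rdd)$) show this contribution lies in $L^2(\rdd)$; on the second, $\langle\lambda\rangle^s\langle\lambda-\mu\rangle^{-N}\lesssim\langle\lambda\rangle^{-(2n+1)}\langle\mu\rangle^{-(2n+1)}$, so the contribution is dominated by $\langle\lambda\rangle^{-(2n+1)}\|V_gf\|_{L^2(\rdd)}\in L^2(\rdd)$. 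Hence $V_g(\Op^\w(a)f)\in L^2_{v_s}(\Gamma')$ for every $s\ge0$, i.e. $X_0\notin WF_G(\Op^\w(a)f)$, which proves the reduced inclusion and, with Step~1, the theorem. I expect the main obstacle to be the structural reduction of Step~1 — extracting from covariance and shift-invertibility the explicit formula for $W_\cA$ and the identity $WF_\cA(f)=\mathcal{L}_\cA WF_G(f)$, with its metaplectic bookkeeping — together with confirming that the weak hypothesis $a\in S^0_{0,0}(\rdd)$, rather than a symbol class with decay, still delivers the super-polynomial off-diagonal decay of the Gabor matrix; the cone-splitting is routine. An alternative is to work directly on $W_\cA$ via the identity $W_\cA(\Op^\w(a)f)=\Op^\w(b)\,W_\cA f$ for a suitable $b\in S^0_{0,0}(\mathbb{R}^{4n})$ obtained from $a$ by the symplectic covariance of the Weyl calculus, which reduces matters to the analogous estimate in dimension $2n$.
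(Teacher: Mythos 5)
The gap is exactly where you flag it: Step~1, the structural reduction $WF_\cA(f)=\mathcal{L}_\cA\,WF_G(f)$. You describe $W_\cA f$ as ``a short-time Fourier transform of $f$ \ldots\ and a window determined by $\cA$,'' but for shift-invertible $W_\cA$ the STFT representation of $W_\cA(f,g)$ has window $\hat S g$ for a metaplectic $\hat S$ fixed by $\cA$, so when $g=f$ the window is a metaplectic transform of $f$ itself --- not a fixed Schwartz window. (For the ordinary Wigner distribution this is the identity $|Wf(x,\xi)|=2^n|V_{\mathcal{I}f}f(2x,2\xi)|$, window $\mathcal{I}f\in L^2$.) The Gabor wave front $WF_G(f)$ of Definition~\ref{def.WFG} is computed against a fixed $g\in\cS\setminus\{0\}$, and its window-independence is a theorem that holds only for Schwartz windows. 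So the asserted coincidence is not a linear change of coordinates; it would need an argument comparing decay of $V_{\hat S f}f$ with decay of $V_gf$ ($g$ Schwartz fixed), and covariance adds a chirp twist on top (Theorem~\ref{thmCovSI}, $W_\cA(f,g)=\cF^{-1}\Phi_{-B_\cA}\ast W(f,g)$) that can move cone-directional decay around. Even the base case $WF_\w(f)=WF_G(f)$ for $f\in L^2$ is a nontrivial theorem, and the paper's concluding section is devoted to clarifying that the unwindowed and Gaussian-smoothed conditions are distinct, so appealing to this identity as if it were routine leaves the proof incomplete for a general covariant shift-invertible $\cA$.

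By contrast, the one-sentence ``alternative'' you mention at the end is precisely the route the present paper takes (and which lets it drop the covariance and shift-invertibility hypotheses altogether in Theorem~\ref{theo.microlocalitycA}). One uses the intertwining $W_\cA(\Op^\w(a)f)=\Op^\w(c)\,W_\cA f$ with $c\in S^0_{0,0}(\bR^{4n})$ (Proposition~\ref{propGCR1}), then replays your cone-splitting argument directly on $W_\cA f$ rather than on $V_gf$: the role of your off-diagonal Gabor-matrix bound $|H_a(\lambda,\mu)|\lesssim\langle\lambda-\mu\rangle^{-N}$ is played by the bound on $\langle X-Y\rangle^{2N}k_\cA(X,Y)$ and the $L^2(\rdd)$-boundedness of the localized operator $P_s$ with kernel $\phi(X)\langle X\rangle^s k_\cA(X,Y)\varphi(Y)$ (Lemma~\ref{lemma53}), together with $L^2_{v_s}(\rdd)$-boundedness of $\Op^\w(c)$ (Proposition~\ref{propBddL2}). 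Your Steps~2--3 are technically sound --- the $V_{\Phi_0}a$ decay estimate, the split over $\{\mu\in\Gamma\}$ and $\{\mu\notin\Gamma\}$, and the Young/$\langle\lambda-\mu\rangle\gtrsim\langle\lambda\rangle+\langle\mu\rangle$ estimates are all correct --- they are simply aimed at $WF_G$ instead of $WF_\cA$. If you run the same scheme one dimension up, on $W_\cA f$ via $\Op^\w(c)$, the proof closes without ever invoking the unproven dictionary between $WF_\cA$ and $WF_G$.
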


The crucial tool for this analysis was the so-called {\em Wigner kernel}. Namely, for a given linear and continuous operator $T:\cS(\rd)\to\cS'(\rd)$, Wigner considered another operator $K:\cS(\rdd)\to\cS'(\rdd)$ satisfying the intertwining relation
\begin{equation}\label{defdiK}
	W(Tf)=K(Wf), \qquad f\in L^2(\rd).
\end{equation}
The formal definition of $K$, investigated in \cite{cordero2024understanding,cordero2023wigner}, requires the use of the {\em cross-Wigner distribution}, 
\begin{equation}\label{defWigner-gg}
	W(f,g)(x,\xi)=\int_{\rd} e^{-2\pi i\xi\cdot y}f\left(x+\frac y 2 \right)\overline{g\left(x-\frac y 2 \right)}dy, \qquad x,\xi\in\rd,\, f,g\in L^2(\rd),
\end{equation}
If $f=g$, we retrieve \eqref{defWignerIntro}. For $T$ as above, the authors of the aforementioned works have proven the existence of a unique tempered distribution $k\in\cS'(\bR^{4n})$, called the {\em Wigner kernel} of $T$, which satisfies
\begin{equation}
	W(Tf,Tg)(X)=\int_{\rdd}k(X,Y)W(f,g)(Y)dY, \qquad f,g\in \cS(\rd),
\end{equation}
with distributional interpretation of the integral. As a byproduct, they obtain the existence of $K$ in \eqref{defdiK}, as the operator with kernel $k$. 

The analysis through Wigner kernels, called {\em Wigner analysis}, turned out to be very powerful in the description of pseudodifferential operators ($\Psi$DOs) and, more generally, of FIOs with quadratic phase and symbols in $S^0_{0,0}(\rdd)$. For a more systematic treatment of $\cA$-wave front sets and to obtain results like Theorem \ref{thmPartII1} in a full-generality, we need to extend Wigner analysis in a metaplectic framwork.
\begin{definition}\label{defkAintro}
	Fix a metaplectic Wigner distribution $W_\cA$. The {\em $\cA$-Wigner kernel} of a linear and continuous operator $T:\cS(\rd)\to\cS'(\rd)$ is the tempered distribution $k_\cA\in\cS'(\bR^{4n})$ such that 
	\begin{equation}
		W_\cA(Tf,Tg)(X)=\int_{\rdd}k_\cA(X,Y)W_\cA(f,g)(Y)dY, \qquad f,g\in\cS(\rd), \qquad X\in\rdd.
	\end{equation}
\end{definition}
The existence of the $\cA$-Wigner kernel of Definition \ref{defkAintro} implies that there exists an operator $K_\cA:\cS(\rdd)\to\cS'(\rdd)$ so that
\begin{equation}\label{defKAintro}
		W_\cA(Tf,Tg)=K_\cA W_\cA(f,g), \qquad f\in\cS(\rd),
	\end{equation}
	as detailed in Section \ref{sec:WKers}. 
The first main results of this work concerns how the phase space concentration, measured by means of an arbitrary $\cA$-Wigner distribution, changes under the action of Weyl operators with symbols in $S_{0,0}^0(\rdd)$. Our definition of metaplectic wave front is as follows.
\begin{definition}\label{defWFtau-intro}
Let $f \in L^2(\mathbb{R}^n)$ and $\hat\cA \in \Mp(2n,\mathbb{R})$. We say that $0 \neq X_0 \notin WF_\cA (f)$, \emph{the $\cA$-Wigner wave front set} of $f$, 
if there exists an open cone $\Gamma_{X_0}\subseteq \dot{\R}^{2n}$, containing $X_0$ such that 
\begin{equation}\label{condWFA2}
W_\cA f \in \bigcap_{s \geq 0} L^2_{v_s}(\Gamma_{X_0}).
\end{equation}
\end{definition}
We obtain the following generalization of Theorem \ref{thmPartII1}.
\begin{theorem}\label{theo.microlocalitycA}
	Let $a\in S^0_{0,0}(\rdd)$ and $W_\cA$ be a metaplectic Wigner distribution. Then, for every $f\in L^2(\rd)$,
	\begin{equation}\label{microlocalitycA}
		WF_\cA(\Op^\w(a)f)\subseteq WF_\cA(f).
	\end{equation}
\end{theorem}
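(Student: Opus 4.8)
\noindent The plan is to compute the $\cA$-Wigner kernel of $\Op^\w(a)$ for an \emph{arbitrary} $\hat\cA\in\Mp(2n,\bR)$, recognize it as a Weyl pseudodifferential operator on $\rdd$ with symbol in $S^0_{0,0}(\bR^{4n})$, and then run the microlocality argument for such operators exactly as in the proof of Theorem \ref{thmPartII1}, now with no covariance or shift-invertibility assumption. For the first step, fix $a\in S^0_{0,0}(\rdd)$, put $T=\Op^\w(a)$, and note that $\overline{T\bar h}=\Op^\w(\tilde a)h$ with $\tilde a(y,\eta):=\overline{a(y,-\eta)}$. Hence the bounded operator on $L^2(\rdd)$ determined by $f\otimes\bar g\mapsto Tf\otimes\overline{Tg}$ is the Weyl operator $\Op^\w(a\boxtimes\tilde a)$, where $(a\boxtimes\tilde a)\big((x,y),(\xi,\eta)\big)=a(x,\xi)\,\tilde a(y,\eta)$, and clearly $a\boxtimes\tilde a\in S^0_{0,0}(\bR^{4n})$. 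Since $W_\cA(f,g)=\hat\cA(f\otimes\bar g)$,
\[
W_\cA(Tf,Tg)=\hat\cA\,\Op^\w(a\boxtimes\tilde a)\,\hat\cA^{-1}\,W_\cA(f,g),\qquad f,g\in\cS(\rd),
\]
so that, by uniqueness of the $\cA$-Wigner kernel (Definition \ref{defkAintro}), $K_\cA=\hat\cA\,\Op^\w(a\boxtimes\tilde a)\,\hat\cA^{-1}$. Writing $\cA\in\Sp(2n,\bR)$ for the ($4n\times4n$ symplectic) projection of $\hat\cA$, the symplectic covariance of the Weyl calculus, $\hat\cA\,\Op^\w(\sigma)\,\hat\cA^{-1}=\Op^\w(\sigma\circ\cA^{-1})$, gives $K_\cA=\Op^\w(b)$ with $b:=(a\boxtimes\tilde a)\circ\cA^{-1}\in S^0_{0,0}(\bR^{4n})$, the membership because $S^0_{0,0}$ is stable under composition with linear isomorphisms.

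\noindent For the microlocal transfer I would invoke the almost diagonalization of $S^0_{0,0}$-operators: for $b\in S^0_{0,0}(\bR^{4n})$ and $G\in\cS(\rdd)\setminus\{0\}$, and every $N\ge0$,
\[
\big|\langle\Op^\w(b)\,\pi(Z')G,\,\pi(Z)G\rangle\big|\le C_N\,\langle Z-Z'\rangle^{-N},\qquad Z,Z'\in\bR^{4n},
\]
which follows from the translation invariance of the $S^0_{0,0}$ seminorms together with the continuity $\Op^\w(b):\cS(\rdd)\to\cS(\rdd)$, and is the estimate underlying Theorem \ref{thmPartII1}. Combined with the STFT characterization of conical $L^2_{v_s}$-membership and a weighted Schur estimate — split the Gabor matrix into a short-range part, harmless at infinity once a cone $\Gamma$ is shrunk to a subcone $\Gamma'$ with $\overline{\Gamma'}\setminus\{0\}\subseteq\Gamma$, and a rapidly decaying long-range part — this gives: if $F\in L^2(\rdd)$ lies in $\bigcap_{s\ge0}L^2_{v_s}(\Gamma)$, then $\Op^\w(b)F\in\bigcap_{s\ge0}L^2_{v_s}(\Gamma')$. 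Applying it with $F=W_\cA f$: given $0\neq X_0\notin WF_\cA(f)$, pick $\Gamma\ni X_0$ with $W_\cA f\in\bigcap_s L^2_{v_s}(\Gamma)$ and a subcone $\Gamma'\ni X_0$ as above; since $W_\cA(\Op^\w(a)f)=K_\cA W_\cA f=\Op^\w(b)W_\cA f$, we obtain $W_\cA(\Op^\w(a)f)\in\bigcap_s L^2_{v_s}(\Gamma')$, i.e. $X_0\notin WF_\cA(\Op^\w(a)f)$, which is \eqref{microlocalitycA}.

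\noindent The first step is short and is, conceptually, the new point: the $\cA$-Wigner kernel of a Weyl operator with $S^0_{0,0}$ symbol is \emph{always} a Weyl operator on $\rdd$ with $S^0_{0,0}$ symbol, which is exactly what dispenses with the structural hypotheses of Theorem \ref{thmPartII1}. The main obstacle is the second step — the weighted, cone-localized $L^2$ estimate converting off-diagonal decay of the Gabor matrix into the microlocal inclusion — but this is essentially the computation already carried out in \cite{cordero2024wigner} for the covariant, shift-invertible case, so I expect to adapt rather than reprove it; the care lies in balancing the weights $\langle Z\rangle^{s}$ against the nested cones $\Gamma'\Subset\Gamma$ and in the fact that $W_\cA f$ is only known to be globally square-integrable, with the extra decay confined to $\Gamma$.
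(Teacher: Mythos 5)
Your first step is correct and reaches the same intermediate result as the paper, namely that the $\cA$-Wigner operator of $\Op^\w(a)$ is a Weyl operator $\Op^\w(b)$ on $\rdd$ with $b\in S^0_{0,0}(\bR^{4n})$, and in fact presents it more transparently: by recognizing $f\otimes\bar g\mapsto Tf\otimes\overline{Tg}$ as $\Op^\w(a\boxtimes\tilde a)$ you make it immediate both why the symbol $c$ in the paper's Proposition \ref{propGCR1} is a \emph{pointwise} (rather than Moyal) product $b\tilde b$ — something the paper leaves implicit in the separated-variable structure of $\sigma,\tilde\sigma$ — and why no covariance or shift-invertibility of $W_\cA$ is needed: general $\hat\cA$ enters only through $\hat\cA\,\Op^\w(\cdot)\,\hat\cA^{-1}=\Op^\w(\cdot\circ\cA^{-1})$. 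Your second step, however, is a genuinely different route from the paper's. The paper never invokes Gabor-matrix almost-diagonalization; it stays at the kernel level, writes $k_\cA(X,Y)=\int e^{2\pi i(X-Y)Z}c\bigl(\frac{X+Y}{2},Z\bigr)dZ$, integrates by parts in $Z$ to extract $\la X-Y\ra^{-2N}$, absorbs $\la X\ra^s$ using the Peetre-type inequality \eqref{ineqImp} on the disjoint conic supports of $\phi,\varphi$, and concludes $L^2$-boundedness by Calder\'on--Vaillancourt (Lemma \ref{lemma53}), handling the near-diagonal part by the $L^2_{v_s}$-boundedness of $\Op^\w(c)$ (Proposition \ref{propBddL2}). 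Your proposal — Gabor-matrix decay plus a weighted Schur test and an STFT characterization of conic $L^2_{v_s}$-membership — exploits the same off-diagonal decay but reroutes it through Gabor frames. It is a plausible strategy and buys conceptual uniformity with the Gabor wave front machinery, but the passage between $F\in\bigcap_s L^2_{v_s}(\Gamma)$ (a condition on $F$ itself on $\rdd$) and decay of $V_GF$ on cones in $\bR^{4n}$ is precisely the delicate point you flag as to-be-adapted, and it is not automatic; the paper's kernel-level argument sidesteps it entirely and is therefore shorter and self-contained. Both approaches are sound; the paper's is the more economical for this statement.
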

We also examine the behavior of the phase space concentration under the action of a FIO with quadratic phase $\Phi$,
\begin{equation}
	\cK f(x)=\int_{\rd} e^{2\pi i\Phi(x,\xi)}a(x,\xi)\hat f(\xi)d\xi,
\end{equation}
see Definition \ref{def.quadraticfio} for the details. However, a finer analysis of these operators requires to restrict to the Wigner wave front set $WF_\w$ already considered in \cite{cordero2022wigner}. 
\begin{theorem}\label{theo.microFIO}
Let $\cK$ be a quadratic FIO {associated} with canonical mapping $S$ and symbol $a \in S_{0,0}^0(\mathbb{R}^{2n})$, and let $f \in L^2(\mathbb{R}^n)$. Then, 
\begin{equation}\label{microlocalityFIO}
WF_\mathrm{w}(\mathcal{K}f) \subseteq S(WF_\mathrm{w}(f)).
\end{equation}
\end{theorem}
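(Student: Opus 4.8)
The plan is to exploit the normal form of quadratic FIOs so as to reduce the statement to two elementary microlocal facts — one for Weyl operators with $S^0_{0,0}$ symbols, the other for metaplectic operators — and then compose them. First I would recall, from Definition \ref{def.quadraticfio} and the composition calculus of quadratic FIOs (cf.\ \cite{cordero2020time}), that a quadratic FIO $\cK$ with canonical mapping the symplectic matrix $S$ on $\rdd$ and symbol $a\in S^0_{0,0}(\rdd)$ can be written as
\[
\cK=\Op^\w(b)\circ\widehat S,
\]
where $\widehat S$ is a metaplectic operator on $L^2(\rd)$ projecting onto $S$ and $b\in S^0_{0,0}(\rdd)$. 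Indeed $\cK\circ\widehat S^{-1}$ is a quadratic FIO with canonical mapping $S\circ S^{-1}=\Id$, hence a Weyl operator, and its symbol stays in $S^0_{0,0}(\rdd)$ because metaplectic conjugation acts on Weyl symbols by a linear change of variables, and linear substitutions preserve the H\"ormander class $S^0_{0,0}(\rdd)$. In particular $\Op^\w(b)$ is bounded on $L^2(\rd)$ by Calder\'on--Vaillancourt and $\widehat S$ is unitary, so $\cK f\in L^2(\rd)$ and $WF_\mathrm{w}(\cK f)$ is well defined.

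For the Weyl factor, observe that the classical Wigner distribution $W$ — the one defining $WF_\mathrm{w}$ — is itself a metaplectic Wigner distribution, so Theorem \ref{theo.microlocalitycA} applies with $W_\cA=W$ and gives $WF_\mathrm{w}(\Op^\w(b)h)\subseteq WF_\mathrm{w}(h)$ for every $h\in L^2(\rd)$. For the metaplectic factor I would use the symplectic covariance $W(\widehat S h)(X)=W(h)(S^{-1}X)$, $X\in\rdd$. Let $X_0\notin S(WF_\mathrm{w}(h))$; then $S^{-1}X_0\notin WF_\mathrm{w}(h)$, so there is an open cone $\Gamma\ni S^{-1}X_0$ with $Wh\in\bigcap_{s\ge0}L^2_{v_s}(\Gamma)$. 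Since $S$ is a linear symplectic isomorphism, $S\Gamma$ is an open cone containing $X_0$, $|\det S|=1$, and $v_s(SY)\le C_S^{\,s}\,v_s(Y)$ for some $C_S\ge1$; hence, substituting $Y=S^{-1}X$,
\[
\int_{S\Gamma}|W(\widehat S h)(X)|^2\,v_s(X)^2\,dX=\int_{\Gamma}|Wh(Y)|^2\,v_s(SY)^2\,dY\le C_S^{\,2s}\int_{\Gamma}|Wh(Y)|^2\,v_s(Y)^2\,dY<\infty
\]
for every $s\ge0$, whence $X_0\notin WF_\mathrm{w}(\widehat S h)$, i.e.\ $WF_\mathrm{w}(\widehat S h)\subseteq S(WF_\mathrm{w}(h))$. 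Chaining the two inclusions with $h=\widehat S f$ and then $h=f$,
\[
WF_\mathrm{w}(\cK f)=WF_\mathrm{w}\big(\Op^\w(b)\widehat S f\big)\subseteq WF_\mathrm{w}(\widehat S f)\subseteq S\big(WF_\mathrm{w}(f)\big),
\]
which is \eqref{microlocalityFIO}. (One could alternatively argue directly via the Wigner kernel of $\cK$, showing that it concentrates along the graph of $S$, but the factorization above is shorter.)

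The hard part will be the first step: matching the specific definition of quadratic FIO in Definition \ref{def.quadraticfio} to the normal form $\Op^\w(b)\circ\widehat S$ with the \emph{correct} symplectic matrix $S$ and with $b$ genuinely in $S^0_{0,0}(\rdd)$. This demands some care with the bookkeeping of the quadratic-FIO calculus — left versus right composition, $S$ versus $S^{-1}$, the absorption of any unimodular chirp $e^{i\pi x\cdot Lx}$ into $\widehat S$ (such a chirp is itself a metaplectic operator), and the verification that the Weyl symbol of $\cK\circ\widehat S^{-1}$ remains bounded together with all its derivatives, which ultimately reduces to the invariance of $S^0_{0,0}(\rdd)$ under linear changes of variables. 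Once this normal form is secured, the two remaining ingredients — Theorem \ref{theo.microlocalitycA} for $W$ and the exact transformation law of $WF_\mathrm{w}$ under metaplectic operators — are immediate, and this is precisely why the refined FIO statement is phrased for $WF_\mathrm{w}$ rather than for a general $WF_\cA$.
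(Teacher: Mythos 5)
Your proof is correct, but it takes a genuinely different path from the one in the paper. The paper works directly with the Wigner kernel $\tilde\kappa$ of $\cK$ (Theorem \ref{theo.K.FIO}), proves an $L^2$-boundedness lemma (Lemma \ref{lemma53FIO}) for a cut-off version $\phi(X)\langle X\rangle^s\tilde\kappa(X,Y)\varphi(S(Y))$, and then repeats the two-cut-off microlocalization argument from the pseudodifferential case essentially verbatim. You instead factor $\cK=\Op^\w(b)\circ\widehat S$ with $b\in S^0_{0,0}(\rdd)$, reduce the Weyl factor to Theorem \ref{theo.microlocalitycA} applied to $W_\cA=W$, and handle the metaplectic factor by the exact transformation law $W(\widehat Sh)(X)=Wh(S^{-1}X)$ together with $|\det S|=1$ and $v_s(SY)\lesssim_S v_s(Y)$; chaining gives \eqref{microlocalityFIO}. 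Your route is more modular — it reuses the pseudodifferential microlocality rather than re-deriving it for the FIO kernel — while the paper's route is self-contained at the price of redoing the cut-off analysis with $S(Y)$ inserted into the second cut-off (which is also exactly where you correctly place $\varphi=(1-\psi)\circ S^{-1}$, though implicitly).

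One caveat you already flag: the factorization $\cK=\Op^\w(b)\widehat S$ with $b\in S^0_{0,0}(\rdd)$ is not quite a consequence of ``metaplectic conjugation preserves $S^0_{0,0}$'' — $\cK\widehat S^{-1}$ is a one-sided composition, not a conjugation, and turning the Kohn--Nirenberg-type amplitude $a(x,\xi)$ in the FIO into a Weyl symbol of the residual $\Psi$DO is a genuine calculus step. The correct citation is the characterization of generalized metaplectic operators in \cite{cordero2014generalized} (quadratic FIOs of type I with $\det A\neq0$ and amplitude in a Sjöstrand/Hörmander class factor as $\widehat S\,\Op^\w(b_1)=\Op^\w(b_2)\,\widehat S$ with $b_1,b_2$ in the same class), combined with $S^0_{0,0}(\rdd)=\bigcap_s M^{\infty,1}_{1\otimes v_s}$ as in \cite{BastianoniDecay}. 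The paper already invokes exactly this machinery to write the Schr\"odinger propagators as $\widehat{S_{j,t}}\Op^\w(b_{j,t})$ in \eqref{introGMOps} and \eqref{eq.solrepform}, so your factorization is available within the paper's own framework; it just needs to be cited rather than argued from scratch.
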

Most of the time, results such as Theorem \ref{theo.microFIO} can also be established for all metaplectic Wigner distributions $W_\cA$. However, the explicit expression of $W_\cA$ is typically required to get \eqref{microlocalityFIO}, making it practically impossible to obtain a statement for general wavefronts $WF_\cA$. For this reason, when it comes to FIOs, we choose to remain in the classical setting of the Wigner distribution.

Next, we study the propagation of phase space concentration of the Wigner distribution associated with two states, in terms of the following wave front sets.
\begin{definition}\label{def.crossWF}
Let $f,g \in L^2(\mathbb{R}^n)$. 
We say that $0\neq X_0 \notin WF^s_\cA(f,g)$
{\em the (cross) $\cA$-Wigner wave front set of $f$ and $g$ of order $s \geq 0$},  
if there exists an open cone $\Gamma_{X_0}\subseteq \dot{\R}^{2n}$ containing $X_0$ such that  $W_\cA(f,g) \in L^2_{v_s}(\Gamma_{X_0})$. In addition, we say that $0\neq X_0 \notin WF_\cA (f,g)$, \emph{the (cross) $\cA$-Wigner wave front set of $f$ and $g$},  
if there exists an open cone $\Gamma_{X_0}\subseteq \dot{\R}^{2n}$ containing $X_0$ such that $W_\cA(f,g) \in L^2_{v_s}(\Gamma_{X_0})$, for every $s\in \mathbb{R}$.
\end{definition}
This notion provides a natural tool to describe correlations between states, how they mutually affect their singularities and eventually have insights on their propagation in phase space. Indeed, the interaction between two states produces the so-called {\em ghost frequencies}, fast oscillations that show up when computing the Wigner distribution of a signal, due to the absence of the mitigating effect of the window function present instead in Definition \ref{def.WFG}, see \cite{cordero2024wigner,cordero2025wigner,cordero2022wigner} and the quadratic nature of the Wigner distribution. For the sake of clarity, consider a limit case where $f=1$ and $g=\delta$, the point-mass distribution. Then, 
\begin{equation}
	W(1+\delta)=1\otimes\delta+\delta\otimes 1+2^{d+1}\cos(4\pi x\xi).
\end{equation}
The third addendum is the real part of $W(f,g)$ and produces oscillations that neither $f$ nor $g$ themselves exhibit, precisely the ghost frequencies \cite{cordero2025wigner}. We are therefore naturally led to study the mutual interaction, from a microlocal perspective, of $u_1$ and $u_2$ Schr\"odinger evolutions of $u_{0,1}, u_{0,2} \in L^2(\R^n)$ determined by
\begin{equation}\label{eq.cauchyj}
\begin{cases}
\frac{1}{2\pi}i\partial_t u_j=H_ju_j & \text{on $\bR \times \R^n$},\\
u_j(0,\cdot)=u_{0,j} & \text{on $\rd$}.
\end{cases}
\end{equation}
Here, the Hamiltonians are
\[
H_1 = \mathrm{Op}^{\mathrm{w}}(a_1) + \mathrm{Op}^{\mathrm{w}}(\sigma_1), \quad H_2 = \mathrm{Op}^{\mathrm{w}}(a_2) + \mathrm{Op}^{\mathrm{w}}(\sigma_2),
\]
where $a_1, a_2\in\cS'(\rdd)$ are real quadratic forms and $\sigma_1, \sigma_2\in S^0_{0,0}(\rdd)$ are bounded perturbations, see also Section \ref{sec.cross} for the details. For $j=1,2$, the solutions to \eqref{eq.cauchyj} can be expressed as {\em generalized metaplectic operators}
\begin{equation}\label{introGMOps}
	e^{2\pi itH_j}u_{0,j}=\widehat{S_{j,t}}\Op^\w(b_{j,t})u_{0,j},
\end{equation}
where, $t\in\bR\mapsto S_{j,t}\in\Sp(n,\bR)$ describes the solution of the classical equations of motion with Hamiltonian $a_j(x,\xi)$ in phase-space, and the symbols $b_{j,t}\in S^0_{0,0}(\rdd)$, see \cite{cordero2014generalized}.
In the case where $a_1=a_2$ the propagation of interferences between $u_1$ and $u_2$ is governed by the following result.
\begin{theorem}\label{theo.crossa1equala2}
Let $u_{0,1},u_{0,2} \in L^2(\mathbb{R}^n)$ and $t\in\bR$. If $a_1=a_2$ and $u_j(t,\cdot)=e^{itH_j}u_{0,j}$ denotes the solution of \eqref{eq.cauchyj} at time $t$ ($j=1,2$), then 
\[
WF_\w(u_1(t,\cdot),u_2(t,\cdot)) \subseteq S_t \, WF_\w(u_{0,1},u_{0,2}),
\]  
where $S_t=S_{1,t}=S_{2,t}$, according to \eqref{introGMOps}.
\end{theorem}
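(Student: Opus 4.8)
The plan is to reduce the cross-Wigner statement to the already-available microlocal machinery for Weyl operators (Theorem~\ref{theo.microlocalitycA}, specialized to $W_\w$, together with the metaplectic covariance of $WF_\w$ under $\widehat{S_t}$) by exploiting the factorization \eqref{introGMOps} and the hypothesis $a_1=a_2$. First I would write, using \eqref{introGMOps} with the common classical flow $S_t := S_{1,t}=S_{2,t}$,
\begin{equation}
u_1(t,\cdot)=\widehat{S_t}\,\Op^\w(b_{1,t})u_{0,1}, \qquad u_2(t,\cdot)=\widehat{S_t}\,\Op^\w(b_{2,t})u_{0,2},
\end{equation}
so that the cross-Wigner distribution becomes $W_\w(u_1(t,\cdot),u_2(t,\cdot)) = W_\w\big(\widehat{S_t}v_1,\widehat{S_t}v_2\big)$ with $v_j:=\Op^\w(b_{j,t})u_{0,j}$. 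The key algebraic identity I would invoke is the metaplectic covariance of the (cross-)Wigner distribution, $W_\w(\widehat{S}v_1,\widehat{S}v_2)=W_\w(v_1,v_2)\circ S^{-1}$ for $\widehat S\in\Mp(2n,\bR)$ projecting to $S\in\Sp(2n,\bR)$; this moves the symplectic matrix out of the arguments and turns it into a linear change of variables on phase space. Consequently $WF_\w(u_1(t,\cdot),u_2(t,\cdot)) = S_t\,WF_\w(v_1,v_2)$, reducing matters to controlling $WF_\w(v_1,v_2)$ in terms of $WF_\w(u_{0,1},u_{0,2})$.

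The second ingredient is a cross-version of the microlocal inclusion for Weyl operators with $S^0_{0,0}$ symbols: I claim
\begin{equation}\label{crossWeylincl}
WF_\w\big(\Op^\w(b_1)f,\Op^\w(b_2)g\big)\subseteq WF_\w(f,g), \qquad b_1,b_2\in S^0_{0,0}(\rdd).
\end{equation}
To prove this I would pass to the Wigner-kernel formulation: by the results recalled before Definition~\ref{defkAintro}, the operator $f\otimes\bar g\mapsto \Op^\w(b_1)f\otimes\overline{\Op^\w(b_2)g}$ has a Wigner kernel $k\in\cS'(\bR^{4n})$ with $W(\Op^\w(b_1)f,\Op^\w(b_2)g)=\int k(X,Y)W(f,g)(Y)\,dY$. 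The crucial point is that this kernel enjoys the same fast off-diagonal decay estimates that drive the proof of Theorem~\ref{thmPartII1}/Theorem~\ref{theo.microFIO} in the diagonal case $b_1=b_2$, $f=g$: schematically, $|k(X,Y)|\lesssim_N \langle X-Y\rangle^{-N}$ for all $N$, because $\Op^\w(b_j)$ has Weyl kernel which is a symbol-type function and the Wigner kernel of such an operator is concentrated near the diagonal $X=Y$. Given such a kernel estimate, a standard cone-decomposition argument (split $\Gamma_{X_0}$ into a slightly smaller cone where one integrates $k$ against the $L^2_{v_s}$-good part of $W(f,g)$, and a complementary region handled by the rapid decay of $k$) yields \eqref{crossWeylincl}; this is exactly the argument used in the diagonal setting, and it goes through verbatim with $f\otimes\bar g$ in place of $f\otimes\bar f$ since the Wigner kernel and its estimates never used $b_1=b_2$. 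Chaining \eqref{crossWeylincl} with the covariance identity gives $WF_\w(u_1(t,\cdot),u_2(t,\cdot))=S_t\,WF_\w(v_1,v_2)\subseteq S_t\,WF_\w(u_{0,1},u_{0,2})$, which is the claim.

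The main obstacle I anticipate is establishing \eqref{crossWeylincl} cleanly, i.e.\ verifying that the bilinear Wigner-kernel estimates for the pair $(\Op^\w(b_1),\Op^\w(b_2))$ are as good as in the single-operator case; one must be careful that the two distinct symbols $b_1\neq b_2$ do not spoil the near-diagonal concentration of the kernel $k$. I expect this to be fine because the Wigner kernel of $A\otimes \bar B$ (in the sense that sends $W(f,g)$ to $W(Af,Bg)$) factorizes through the Weyl symbols of $A$ and $B$ separately, and each symbol lying in $S^0_{0,0}$ contributes its own Schwartz-type off-diagonal decay; the product of two such contributions is again rapidly decaying off the diagonal. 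A secondary, more bookkeeping-type obstacle is tracking that $b_{1,t},b_{2,t}\in S^0_{0,0}(\rdd)$ with the required uniformity — but this is guaranteed by the generalized-metaplectic representation \eqref{introGMOps} from \cite{cordero2014generalized}, so it is not a genuine difficulty. Finally, one should double check that the hypothesis $a_1=a_2$ is used in exactly one place — ensuring the \emph{same} $\widehat{S_t}$ appears in both factorizations so that covariance applies — and that without it one would instead get a two-sided containment governed by $S_{1,t}$ and $S_{2,t}$ separately, which is presumably treated in the companion result in Section~\ref{sec.cross}.
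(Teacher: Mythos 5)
Your proposal follows essentially the same route as the paper: factor the propagators as $\widehat{S_t}\Op^\w(b_{j,t})$, use metaplectic covariance of the cross-Wigner distribution to pull $\widehat{S_t}$ out (this is Proposition~\ref{covpropWigner}), reduce to a cross-version of the Weyl microlocal inclusion (the paper gets this from \eqref{CGR-f1}, \eqref{CGR-f2}, Theorem~\ref{compweyl}, and then reruns the argument of Theorem~\ref{theo.microlocalitycA} as noted in Remark~\ref{rmk.stillholdscrosswigner}), and finish with the change of variables $X=S_tY$ and the weight equivalence $\langle Y\rangle\approx\langle S_tY\rangle$.

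One caution on your sketch of the inclusion \eqref{crossWeylincl}: the literal pointwise bound $|k(X,Y)|\lesssim_N\langle X-Y\rangle^{-N}$ is \emph{not} correct for Wigner kernels of Weyl operators with $S^0_{0,0}$ symbols (already $T=\mathrm{Id}$ gives $k(X,Y)=\delta(X-Y)$, which has no pointwise meaning). The correct and usable statement is the one in Lemma~\ref{lemma53}: $\langle X-Y\rangle^{2N}k(X,Y)$ is again the Schwartz kernel of a Weyl operator with $S^0_{0,0}$ symbol, which, combined with Calder\'on--Vaillancourt, gives the $L^2$-boundedness of the microlocally cut-off operator. You already flag your bound as "schematic," and the overall architecture is sound, but the rigorous implementation must go through the off-diagonal regularization of Lemma~\ref{lemma53} rather than a genuine pointwise decay of the kernel.
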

Furthermore, when $a_1\neq a_2$ we may study the interaction in terms of the corresponding metaplectic operators as shown in the next Theorem. 
\begin{theorem}\label{theo.crossa1neqa2}
Let $t\in\bR$. Then, if $\cA \in \Sp(2n,\mathbb{R})$ and if $a_1\neq a_2$, we have 
\[
WF_\cA(u_1(t,\cdot),u_2(t,\cdot)) \subseteq WF_{\cC_t}(u_{0,1},u_{0,2}),
\]
where $\cC_t=\cA\cS_t$, with $\cS_t=S_{1,t} \otimes \overline{S_{2,t}}$, see \eqref{tensmates} and \eqref{conjmates}.
\end{theorem}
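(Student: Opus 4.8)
The plan is to strip off the metaplectic part of the two propagators and reduce the statement to the cross analogue of the microlocality Theorem~\ref{theo.microlocalitycA} for Weyl operators with $S^0_{0,0}$ symbols. Since for $j=1,2$ the symbol $a_j$ is a real quadratic form and $\sigma_j\in S^0_{0,0}(\rdd)$, representation~\eqref{introGMOps} (see \cite{cordero2014generalized}) gives $u_j(t,\cdot)=\widehat{S_{j,t}}\,v_j$, where $v_j:=\Op^\w(b_{j,t})u_{0,j}\in L^2(\rd)$ (Calder\'on--Vaillancourt) and $b_{j,t}\in S^0_{0,0}(\rdd)$.

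Next I would push the two metaplectic factors through the cross-Wigner transform. By the tensor and conjugate matrix constructions of \eqref{tensmates} and \eqref{conjmates} one has $\widehat{S\otimes\bar{S'}}(h_1\otimes\bar h_2)=(\widehat S h_1)\otimes(\overline{\widehat{S'}h_2})$, so, recalling $W_\cB(f,g)=\widehat\cB(f\otimes\bar g)$ and the composition law of $\Mp(2n,\bR)$,
\begin{align*}
W_\cA(u_1(t,\cdot),u_2(t,\cdot))
&= \widehat\cA\big((\widehat{S_{1,t}}v_1)\otimes\overline{\widehat{S_{2,t}}v_2}\big)
= \widehat\cA\,\widehat{S_{1,t}\otimes\overline{S_{2,t}}}\,(v_1\otimes\bar v_2) \\
&= \widehat{\cA\,\cS_t}\,(v_1\otimes\bar v_2)
= W_{\cC_t}(v_1,v_2),
\end{align*}
with $\cC_t=\cA\,\cS_t$; the sign ambiguity in the lift $\cA\mapsto\widehat\cA$ is immaterial, as it does not affect $|W_{\cC_t}(v_1,v_2)|$. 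As the two distributions coincide, so do the associated conical $L^2_{v_s}$-decay sets, whence
\[
WF_\cA(u_1(t,\cdot),u_2(t,\cdot))=WF_{\cC_t}\!\big(\Op^\w(b_{1,t})u_{0,1},\ \Op^\w(b_{2,t})u_{0,2}\big).
\]

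It then suffices to prove the cross version of Theorem~\ref{theo.microlocalitycA}: for every metaplectic Wigner distribution $W_\cB$, all $b_1,b_2\in S^0_{0,0}(\rdd)$ and all $h_1,h_2\in L^2(\rd)$,
\[
WF_\cB\big(\Op^\w(b_1)h_1,\ \Op^\w(b_2)h_2\big)\subseteq WF_\cB(h_1,h_2).
\]
This is carried out exactly as in the diagonal case of Section~\ref{sec.cross}: one introduces the cross $\cB$-Wigner kernel $k_\cB$ of the pair $(\Op^\w(b_1),\Op^\w(b_2))$, defined by $W_\cB(\Op^\w(b_1)h_1,\Op^\w(b_2)h_2)(X)=\int_{\rdd}k_\cB(X,Y)W_\cB(h_1,h_2)(Y)\,dY$, shows that $k_\cB$ is rapidly decreasing off the diagonal, $|k_\cB(X,Y)|\lesssim_N\langle X-Y\rangle^{-N}$ for every $N$, so that the associated operator $K_\cB$ is bounded on $L^2_{v_s}(\rdd)$ for every $s\ge0$; then, given $X_0\notin WF_\cB(h_1,h_2)$ and a cone $\Gamma\ni X_0$ realising the decay of $W_\cB(h_1,h_2)$, one splits $W_\cB(h_1,h_2)=\chi_\Gamma W_\cB(h_1,h_2)+(1-\chi_\Gamma)W_\cB(h_1,h_2)$, sends the first summand through $K_\cB$ (it stays in $L^2_{v_s}(\rdd)$), and uses the off-diagonal decay of $k_\cB$ together with $W_\cB(h_1,h_2)\in L^2(\rdd)$ to estimate the image of the second summand on a slightly narrower cone $\Gamma'\ni X_0$. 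Applying this with $\cB=\cC_t$, $b_j=b_{j,t}$, $h_j=u_{0,j}$ and combining with the identity displayed above yields $WF_\cA(u_1(t,\cdot),u_2(t,\cdot))\subseteq WF_{\cC_t}(u_{0,1},u_{0,2})$.

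The main obstacle is precisely the off-diagonal decay of the cross $\cB$-Wigner kernel for an \emph{arbitrary} metaplectic $W_\cB$: this is the two-symbol, metaplectic counterpart of the estimates underlying Theorem~\ref{theo.microlocalitycA}, and once it is in hand the rest is the standard two-cone localisation. A secondary point requiring care is the matrix bookkeeping: one must check that $\cS_t=S_{1,t}\otimes\overline{S_{2,t}}$ is symplectic on $\bR^{4n}$---so that $\cC_t=\cA\,\cS_t$ genuinely defines a metaplectic Wigner distribution---and that the intertwining $\widehat{S\otimes\bar{S'}}(h_1\otimes\bar h_2)=(\widehat S h_1)\otimes(\overline{\widehat{S'}h_2})$ holds with the normalisations fixed in \eqref{tensmates}--\eqref{conjmates}.
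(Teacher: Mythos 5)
Your proposal is correct and follows essentially the same route as the paper: strip the metaplectic factors from the propagators via Propositions~\ref{propGC1} and~\ref{propGC2}, absorb them into $\widehat{\cC_t}=\hat\cA\,\widehat{\cS_t}$, and reduce to the pseudodifferential microlocality for $S^0_{0,0}$ symbols at the cross level. The only organizational difference is that you keep the Weyl factors on the inner functions, writing $W_\cA(u_1,u_2)=W_{\cC_t}(\Op^\w(b_{1,t})u_{0,1},\Op^\w(b_{2,t})u_{0,2})$ and then invoking $WF_{\cC_t}(\Op^\w(b_1)h_1,\Op^\w(b_2)h_2)\subseteq WF_{\cC_t}(h_1,h_2)$, which is precisely the unnumbered Proposition at the start of Section~\ref{sec.cross}; the paper instead commutes the conjugation through $\Op^\w(b_{2,t})$, tensors the two Weyl operators into a single $\Op^\w(b_t)$ on $\bR^{4n}$, and uses the intertwining relation~\eqref{intertOpwMetap} to land on $W_\cA(u_1,u_2)=\Op^\w(b_t\circ(\cA\cS_t)^{-1})W_{\cC_t}(u_{0,1},u_{0,2})$ before applying the microlocalization of Theorem~\ref{theo.microlocalitycA}. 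Your version saves the intertwining and conjugation bookkeeping, at the cost of needing the cross statement of the pseudodifferential microlocality, which the paper has anyway (and which Remark~\ref{rmk.stillholdscrosswigner} confirms follows by the same argument). One small imprecision in your sketch of that ingredient: for $b\in S^0_{0,0}$ the pointwise bound $|k_\cB(X,Y)|\lesssim_N\langle X-Y\rangle^{-N}$ does not hold in general, since the Schwartz kernel of a Calder\'on--Vaillancourt operator is only a tempered distribution; what Lemma~\ref{lemma53} actually establishes is that $\langle X-Y\rangle^{2N}k_\cB(X,Y)$ is again the kernel of a Weyl operator with $S^0_{0,0}$ symbol, hence $L^2$-bounded, which is the correct form of ``off-diagonal decay'' feeding the two-cone localisation. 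Since the cross proposition is already proved in the paper, this does not affect the validity of your argument as stated.
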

 
{\bf Outline.} In Section \ref{sec.pre} we recall the main facts of symplectic and pseudodifferential theory that we need throughout the work. In Section \ref{sec:WKers} we study the $\cA$-Wigner Kernels used in Section \ref{sec.Awigmic} to investigate how the microlocal phase space concentration of the ($\cA$-)Wigner distribution changes under the action of Weyl operators. In Section \ref{sec:WMFIOs} we study the microlocal properties of FIOs with quadratic phases and bounded symbols. In Section \ref{sec.cross}, we address the problem of examining the behavior of the $\cA$-Wigner distributions corresponding to the interaction of two states. The conclusion is withdrawn in Section \ref{sec:conclusion}, where we also compare the classical framework of Gabor wavefronts with Wigner wave fronts.

We point out that theory stated in this work for operators having symbols in $S^0_{0,0}(\rdd)$ can be rephrased verbatim to far wider classes of symbols, the so-called {\em weighted Sj\"ostrand classes} \cite{MR1266757}. The intersection of these spaces coincides with $S^0_{0,0}(\rdd)$, as it is detailed in \cite[Lemma 2.3]{BastianoniDecay}, and eventually our results can be derived as particular instances of this more general framework. Our restriction to $S^0_{0,0}(\rdd)$ is therefore made purely for the sake of simplicity.

\section{Prerequisites}\label{sec.pre}
We denote by $xy=x\cdot y=\langle x, y \rangle_{\rd}$ the standard inner product in $\mathbb{R}^{n}$, whereas $B_R=\{x\in\mathbb{R}^{n}:|x|<R\}$. If $A\subseteq \rd$, then $A^c$ denotes its complement in $\rd$. We denote by $I_n$ the $n\times n$ identity matrix and by $0_n$ the $n\times n$ null matrix. If $Q\in\bR^{n\times n}$, we denote by $Q^\top$ its transpose. The set of real $n\times n$ symmetric matrices is denoted by $\Sym(n,\bR)$. The group of real $n\times n$ invertible matrices is denoted by $\GL(n,\bR)$. By $\langle \cdot,\cdot \rangle_{\mathbb{C}^{2n}}$ we mean the non-Hermitian form on $\mathbb{C}^{2n}$ defined by $\langle X,Y \rangle_{\mathbb{C}^{2n}}=\sum_iX_iY_i$ for $X,Y \in \mathbb{C}^{2n}$.
We denote by $X=(x,\xi) \in \mathbb{R}^{2n}$ the global variable of the phase space and we write $\dot{\mathbb{R}}^{2n}=\mathbb{R}^{2n}\setminus \lbrace 0 \rbrace$.
With $\mathbb{N}_0$ we mean $\mathbb{N} \cup \lbrace 0 \rbrace$. 
We say that a set $\Gamma\subseteq \dot{\mathbb{R}}^{2n}$ is \textit{conic} if it is invariant under multiplication by positive real numbers. Hence a cone (i.e. a conic set) $\Gamma \subseteq \dot{\mathbb{R}}^{2n}$ is uniquely determined by its intersection with the unit sphere $\mathbb{S}^{2n-1}$. We write $A \approx B$ if there exists a universal constant $C>0$ such that $C^{-1}B \leq A \leq CB$ and $A\lesssim B$ if $A \leq CB$. By abuse,
if $f\in\cS'(\rd)$ and $E\in\GL(n,\bR)$, we write $f(Ex)$ to indicate the pullback of $f$ through the diffeomorphism induced by $E$. 

The Schwartz class of rapidly decreasing functions is denoted by $\cS(\mathbb{R}^{n})$. Its topological dual is the space of tempered distributions. The notation $\langle \cdot,\cdot\rangle$ stands for both the sesquilinear inner product in $L^2(\mathbb{R}^{n})$, $\la f,g\ra =\int_{\mathbb{R}^{n}}f(x)\overline{g(x)}dx$, $f,g\in L^2(\mathbb{R}^{n})$, and for its unique extension to a duality pairing on $\cS'\times\cS$ (antilinear in the second component), i.e., $\la f,\varphi\ra=f(\bar\varphi)$, $f\in \cS'(\mathbb{R}^{n})$, $\varphi\in\cS(\mathbb{R}^{n})$. Moreover, by $\|\cdot \|$ we mean the $L^2$-norm. The flip operator is $\cI g(t)=g(-t)$.

We write $v_s(x)=\la x\ra^s=(1+|x|^2)^{s/2}$, the Japanese brackets. A complex-valued measurable function $f$ on $\rd$ belongs to $L^2_{v_s}(\rd)$ if its weighted Lebesgue norm
\begin{equation}
	\norm{f}_{L^2_{v_s}}=\left(\int_{\rd}|f(x)|^2v_s(x)^2dx\right)^{1/2}
\end{equation} 
is finite. If $\Gamma\subseteq\rd$, we write $f\in L^2_{v_s}(\Gamma)$ if $\chi_{\Gamma}f\in L^2_{v_s}(\rd)$, where $\chi_\Gamma$ is the characteristic function of $\Gamma$.

In what follows, for $x,\xi \in \mathbb{R}^n$ we denote by $T_x$ and $M_\xi$ the translation and the modulation operators respectively, defined as 
\[
T_xf(y)=f(y-x), \quad M_\xi f(y)=e^{2\pi i \xi y}f(y), \quad f \in L^2(\mathbb{R}^n).
\]
Then, for $(x,\xi) \in \mathbb{R}^{2n}$, the \emph{time-frequency shift} is the operator $\pi(x,\xi)$
defined by
\[
\pi(x,\xi)f=M_\xi T_x f, \quad f \in L^2(\mathbb{R}^n).
\]
For the Fourier transform, we use the normalization
\begin{equation}\label{FTdef}
	\hat f(\xi)=\int_{\rd}e^{-2\pi ix \xi}f(x)dx, \qquad f\in L^1(\rd), \; \xi\in\rd.
\end{equation}

\subsection{Wigner distribution}\cite{MR3643624,folland1989harmonic,grochenig2013foundations,lernermetrics}

Recall the definition of the cross-Wigner distribution given in \eqref{defWigner-gg} for $f,g\in L^2(\rd)$. Its main continuity properties are summarized by the following result.
\begin{proposition}
The operator $W$ maps $\mathcal{S}(\R^n) \times \mathcal{S}(\R^n)$ into $\mathcal{S}(\R^{2n})$ and extends to a map $\mathcal{S}'(\R^n) \times \mathcal{S}'(\R^n)$ into $\mathcal{S}'(\R^{2n})$. Moreover, $W$ maps $L^2(\R^n) \times L^2(\R^n)$ into $L^2(\R^{2n})\cap \mathcal{C}(\R^{2n})$ and 
\[
\langle W(f_1,g_1), W(f_2,g_2) \rangle =\langle f_1,f_2 \rangle \overline{\langle g_1,g_2 \rangle}, \quad f_1,f_2,g_1,g_2 \in L^2(\R^n).
\]
\end{proposition}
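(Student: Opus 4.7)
The plan is to decompose $W$ as a composition of three elementary operations whose mapping properties on $\mathcal{S}$, $\mathcal{S}'$ and $L^2$ are all standard, and then deduce Moyal's identity from this factorization together with Plancherel's theorem.

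First, for $f,g\in\mathcal{S}(\mathbb{R}^n)$ I would set
\begin{equation}
T(f,g)(x,y)=f\!\left(x+\tfrac{y}{2}\right)\overline{g\!\left(x-\tfrac{y}{2}\right)},
\end{equation}
and observe that $T(f,g)=(f\otimes \bar g)\circ L$, where $L\in\GL(2n,\bR)$ is the change of variables $(x,y)\mapsto(x+y/2,x-y/2)$. Since the tensor product is continuous $\mathcal{S}(\rd)\times\mathcal{S}(\rd)\to\mathcal{S}(\rdd)$ and composition with $L$ preserves $\mathcal{S}(\rdd)$ continuously, $T$ is continuous $\mathcal{S}(\rd)\times\mathcal{S}(\rd)\to\mathcal{S}(\rdd)$. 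Writing $W(f,g)=\mathcal{F}_2\, T(f,g)$, where $\mathcal{F}_2$ denotes the partial Fourier transform in the second variable, the continuity of $\mathcal{F}_2$ on $\mathcal{S}(\rdd)$ yields the first claim.

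Next, I would extend $W$ to $\mathcal{S}'(\rd)\times\mathcal{S}'(\rd)$ by duality: the bilinear map $T$ extends to $\mathcal{S}'\times\mathcal{S}'\to\mathcal{S}'(\rdd)$ via $\langle T(f,g),\varphi\rangle=\langle f\otimes\bar g,|\det L^{-1}|\varphi\circ L^{-1}\rangle$, and the partial Fourier transform extends to $\mathcal{S}'(\rdd)$, so $W(f,g)=\mathcal{F}_2 T(f,g)\in\mathcal{S}'(\rdd)$. For the $L^2$ statement, the change of variables $L$ is volume-preserving (up to a sign), so $T:L^2(\rd)\times L^2(\rd)\to L^2(\rdd)$ is an isometric bilinear map, and $\mathcal{F}_2$ is an isometry on $L^2(\rdd)$; hence $W$ maps $L^2\times L^2$ into $L^2$. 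Continuity of $W(f,g)$ as a function on $\rdd$ for $f,g\in L^2(\rd)$ follows from approximation: on Schwartz pairs $W(f,g)$ is even Schwartz, and using $\|W(f,g)\|_\infty\le \|f\|_2\|g\|_2$ (a direct Cauchy--Schwarz on the defining integral, or via the relation with the STFT) one obtains uniform continuity by density of $\mathcal{S}$ in $L^2$.

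Finally, Moyal's identity is a direct computation based on the above factorization. For $f_j,g_j\in\mathcal{S}(\rd)$, Plancherel for $\mathcal{F}_2$ gives
\begin{equation}
\langle W(f_1,g_1),W(f_2,g_2)\rangle=\langle T(f_1,g_1),T(f_2,g_2)\rangle,
\end{equation}
and performing the change of variables $L^{-1}:(x,\xi)\mapsto\big(\tfrac{x+\xi}{2},\tfrac{x-\xi}{2}\big)$ (with unit Jacobian) in the right-hand side yields
\begin{equation}
\int_{\rdd} f_1(u)\overline{f_2(u)}\,\overline{g_1(v)}g_2(v)\,du\,dv=\langle f_1,f_2\rangle\overline{\langle g_1,g_2\rangle}.
\end{equation}
The identity then extends to all $f_j,g_j\in L^2(\rd)$ by density and the continuity of both sides in $(f_1,f_2,g_1,g_2)\in (L^2)^4$.

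The only delicate point I anticipate is justifying the $\mathcal{S}'$ extension in a coordinate-free way compatible with the $L^2$ action (so that all three instances of $W$ genuinely coincide on overlaps); this is routine but requires unwinding the definitions of pullback and partial Fourier transform of tempered distributions carefully. The Moyal identity itself and all the continuity statements reduce to standard properties of Fourier transform, tensor product, and linear change of variables.
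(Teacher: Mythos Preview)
Your proof is correct and follows the standard textbook argument: factor $W=\mathcal{F}_2\circ T$ with $T(f,g)=(f\otimes\bar g)\circ L$, then read off all mapping properties from the corresponding well-known properties of tensor product, linear change of variables, and partial Fourier transform; Moyal's identity then drops out of Plancherel for $\mathcal{F}_2$ plus the unit-Jacobian substitution. This is exactly the approach in the references the paper cites for this subsection (Folland, Gr\"ochenig, Lerner); the paper itself does not give a proof, treating the proposition as classical background.

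Two minor comments. First, your formula for $L^{-1}$ is off: from $L(x,y)=(x+y/2,\,x-y/2)$ one gets $L^{-1}(u,v)=\big((u+v)/2,\,u-v\big)$, not $\big((u+v)/2,\,(u-v)/2\big)$. This does not affect the argument since you only need $|\det L|=1$ and the decoupling of the $u$ and $v$ integrals. Second, for the uniform bound $\|W(f,g)\|_\infty\lesssim\|f\|_2\|g\|_2$ used in the density step for continuity, the cleanest one-line justification is that for fixed $x$ the map $y\mapsto f(x+y/2)\overline{g(x-y/2)}$ has $L^1$-norm bounded by $2\|f\|_2\|g\|_2$ (Cauchy--Schwarz after substituting $u=x+y/2$), so its Fourier transform is bounded by that constant uniformly in $(x,\xi)$.
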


\subsection{Symplectic matrices}\cite{folland1989harmonic,grochenig2013foundations}
	Let
	\begin{equation}\label{defJ}
		J=\begin{pmatrix}
			0_n & I_n\\
			-I_n & 0_n
		\end{pmatrix}
	\end{equation}
	be the matrix of the standard symplectic form of $\mathbb{R}^{2n}$. A matrix $S\in\bR^{2n\times2n}$ is \textit{symplectic} if $S^\top JS=J$. We denote by $\Sp(n,\bR)$ the group of $2n\times 2n$ symplectic matrices. If
	\begin{equation}\label{blockS}
		S=\begin{pmatrix}
			A & B\\
			C & D
		\end{pmatrix},
	\end{equation}
	with $A,B,C,D\in\bR^{n\times n}$, then $S\in \Sp(n,\bR)$ if and only if
	\begin{align}
		\label{Sp1}
		&A^\top C=C^\top A,\\
		\label{Sp2}
		&B^\top D=D^\top B,\\
		\label{Sp3}
		&A^\top D-C^\top B=I_n.
	\end{align}
	\begin{proposition}
		The symplectic group $\Sp(n,\bR)$ is generated by matrices in the form
		\begin{equation}\label{defDEVQ}
			\cD_E=\begin{pmatrix}
			E^{-1} & 0_n\\
			0_n & E^\top
			\end{pmatrix},
			\quad 
			V_Q=\begin{pmatrix}
			I_n & 0_n\\
			Q & I_n
			\end{pmatrix}, \quad E\in \GL(n,\bR), \ Q\in\Sym(n,\bR)
		\end{equation}
		and by the matrix $J$ defined in \eqref{defJ}.
	\end{proposition}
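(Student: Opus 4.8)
The plan is to prove that the subgroup $G\subseteq\Sp(n,\bR)$ generated by the matrices $\cD_E$ ($E\in\GL(n,\bR)$), $V_Q$ ($Q\in\Sym(n,\bR)$) and $J$ is all of $\Sp(n,\bR)$. The strategy is to exhibit an arbitrary symplectic matrix as a product of two \emph{free} symplectic matrices --- those whose $(1,2)$-block $B$ (in the notation of \eqref{blockS}) is invertible --- and to write each free matrix explicitly as a word in $\cD_E$, $V_Q$, $J$. Recall that $S^\top$ is again symplectic, so that \eqref{Sp1}--\eqref{Sp3} hold for $S$ and for $S^\top$.

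\textbf{Step 1: every free matrix lies in $G$.} For $S\in\Sp(n,\bR)$ written as in \eqref{blockS} with $\det B\neq0$, I would verify the identity
\[
S=V_{DB^{-1}}\,\cD_{B^{-1}}\,J\,V_{B^{-1}A}.
\]
First one checks that the right-hand side is legitimate: $DB^{-1}$ is symmetric because $B^\top D=D^\top B$ by \eqref{Sp2}, and $B^{-1}A$ is symmetric because $AB^\top=BA^\top$, which is \eqref{Sp1} applied to $S^\top$. Then one multiplies the four block matrices from right to left; every block comes out immediately except the $(2,1)$-block of the product, namely $DB^{-1}A-(B^\top)^{-1}$, whose equality with $C$ reduces, after left multiplication by $B^\top$ and use of $B^\top D=D^\top B$, to $D^\top A-B^\top C=I_n$, i.e. to the transpose of \eqref{Sp3}. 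Hence every free matrix belongs to $G$.

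\textbf{Step 2: reduction to the free case.} Given an arbitrary $S\in\Sp(n,\bR)$, I look for a free $M\in\Sp(n,\bR)$ with $SM^{-1}$ also free; Step 1 then gives $M,SM^{-1}\in G$ and hence $S=(SM^{-1})M\in G$. The set $\mathcal{F}$ of free matrices is the complement in $\Sp(n,\bR)$ of the zero set of the polynomial function sending $M$ to the determinant of its $(1,2)$-block; this function does not vanish at $J$, so $\mathcal{F}$ is open, and it is dense because the zero set of a not-identically-zero polynomial on the connected manifold $\Sp(n,\bR)$ is nowhere dense. Similarly, using $M^{-1}=-JM^\top J$ for $M\in\Sp(n,\bR)$, the set $\mathcal{F}':=\{M\in\Sp(n,\bR):SM^{-1}\in\mathcal{F}\}$ is the complement of the zero set of a polynomial in $M$ that does not vanish at $M=-JS$ (there $SM^{-1}=J$), so $\mathcal{F}'$ is open and dense as well. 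A dense set meets every nonempty open set, so $\mathcal{F}\cap\mathcal{F}'\neq\emptyset$, and any $M$ in the intersection does what is required; thus $G=\Sp(n,\bR)$.

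\textbf{Where the difficulty lies.} Step 1 is a routine block-matrix computation and Step 2 is a short genericity argument, so the only substantive external ingredient is the density statement, which rests on connectedness of $\Sp(n,\bR)$. That fact is standard and logically prior to the present proposition --- e.g. via the Cartan decomposition of $\Sp(n,\bR)$, whose maximal compact subgroup is isomorphic to $U(n)$ --- so no circularity arises. If one prefers to avoid connectedness altogether, one may instead invoke the classical existence of a Lagrangian subspace of $\bR^{2n}$ transverse to two prescribed ones: applied to $\{0\}\times\bR^n$ and to the kernel of $(x,\xi)\mapsto Ax+B\xi$ (which is Lagrangian by the symplectic relations), it produces $Q\in\Sym(n,\bR)$ with $\det(A+BQ)\neq0$, so that $SV_QJ$ is free and $S=(SV_QJ)\,J^{-1}V_{-Q}\in G$, using $J^{-1}=-J=J^3\in G$ and Step 1.
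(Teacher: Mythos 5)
Your proof is correct. The paper states this proposition without proof, citing the standard references (Folland, Gr\"ochenig), and your argument is precisely the standard one found there: the factorization $S=V_{DB^{-1}}\,\cD_{B^{-1}}\,J\,V_{B^{-1}A}$ for free symplectic matrices ($\det B\neq0$) checks out block by block, with the symmetry of $DB^{-1}$ and $B^{-1}A$ and the $(2,1)$-entry all reducing to the relations \eqref{Sp1}--\eqref{Sp3} for $S$ and $S^\top$ exactly as you indicate; the genericity argument in Step 2 correctly uses connectedness of $\Sp(n,\bR)$ to conclude that the open sets $\mathcal{F}$ and $\mathcal{F}'$ are dense and hence meet. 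The alternative ending via a Lagrangian transverse to $\{0\}\times\bR^n$ and to $S^{-1}(\{0\}\times\bR^n)$, producing $Q$ with $\det(A+BQ)\neq0$, is also valid and has the virtue of being purely algebraic.
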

	
	In this work, we shall also consider $4n\times4n$ symplectic matrices. To distinguish these matrices from $2n\times 2n$ symplectic matrices, we denote the former with calligraphic capital letters (e.g., $\cA$), and the latter with capital letters (e.g. $S$). 

\subsection{Metaplectic operators}\label{subsec:MOps}
	Consider \textit{the Schr\"odinger representation} of the Heisenberg group:
	\begin{equation}\label{eq19}
		\rho(x,\xi;\tau)g(y)=e^{2\pi i\tau}e^{-i\pi x\xi}e^{2\pi i\xi y}g(y-x), \qquad x,\xi\in\mathbb{R}^{n}, \; \tau\in\bR, \; g\in L^2(\mathbb{R}^{n}).
	\end{equation}
	For every symplectic matrix $S\in \Sp(n,\bR)$ there exists an operator $\hat S$ unitary on $L^2(\mathbb{R}^{n})$ such that
	\begin{equation}\label{intertMp}
		\hat S\rho(x,\xi;\tau)\hat S^{-1}=\rho(S(x,\xi);\tau), \qquad x,\xi\in\mathbb{R}^{n}, \; \tau\in\bR.
	\end{equation}
	Any such operator $\hat S$ is called a {\em metaplectic operator}. A careful analysis of \eqref{intertMp} reveals that if it is satisfied by $\hat S$, then it is also satisfied by $c\hat S$ for every $c\in\bT=\{z\in\bC:|z|=1\}$ ({\em phase factor}). The group $\{\hat S:S\in\Sp(n,\bR)\}$ has a subgroup consisting of exactly two metaplectic operators for each symplectic matrix, the so-called {\em metaplectic group}, denoted by $\Mp(n,\bR)$. The projection $\pi^{Mp}:\hat S\in \Mp(n,\bR)\mapsto S\in\Sp(n,\bR)$ is a group homomorphism with kernel $\{\pm \id_{L^2}\}$. For the theory of metaplectic operators we refer to \cite{cordero2020time}, but the interested reader may also refer to \cite{de2021quantum,folland1989harmonic,grochenig2013foundations}. 
	
	In synthesis, a symplectic matrix $S$ defines a metaplectic operator up to a phase factor. For the purposes of this work, this constant is irrelevant and, therefore, we omit its specific choice in the discussion whenever it does not cause confusion.
	
	\begin{proposition}\label{propContMetap}
		The metaplectic group $\Mp(n,\bR)$ is generated by the operators in the form
		\begin{align}
			\label{FTMp}
			&\cF f(\xi)=i^{-n/2} \hat f(\xi)= i^{-n/2}\lim_{R\to+\infty}\int_{B_R}e^{-2\pi i\xi x}f(x)dx,\\
			\label{TEMp}
			&\mathfrak{T}_Ef(x)=i^m |\det(E)|^{1/2}f(Ex), \qquad E\in \GL(n,\bR),\\
			\label{pQMp}
			&\mathfrak{p}_Qf(x)=\Phi_Q(x)f(x), \qquad Q\in\Sym(n,\bR),
		\end{align}
		$ f\in L^2(\mathbb{R}^{n})$, where $m$ is the argument of $\det(E)^{1/2}$ ({\em Maslov index}) and
		\begin{equation}\label{defPhi}
			\Phi_Q(x)=e^{i\pi Qx\cdot x}
		\end{equation}
		is a chirp. Consequently, metaplectic operators restrict to topological isomorphisms of $\cS(\mathbb{R}^{n})$ and extend to topological isomorphisms of $\cS'(\mathbb{R}^{n})$ by duality:
		\begin{equation}
			\la \hat Sf,g\ra=\la f,\hat S^{-1} g\ra, \qquad f\in\cS'(\rd),\; g\in\cS(\rd).
		\end{equation}
	\end{proposition}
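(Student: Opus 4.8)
\emph{Generation.} The plan is to leverage the fact, recorded just above, that $\Sp(n,\bR)$ is generated by the matrices $\cD_E$ and $V_Q$ of \eqref{defDEVQ} together with $J$. First I would check, by a direct computation with the Schr\"odinger representation \eqref{eq19}, that $\cF$, $\mathfrak{T}_E$ and $\mathfrak{p}_Q$ fulfil the intertwining relation \eqref{intertMp} with $J$, $\cD_E$ and $V_Q$ respectively; i.e.\ $\pi^{Mp}$ maps $\cF\mapsto J$, $\mathfrak{T}_E\mapsto\cD_E$, $\mathfrak{p}_Q\mapsto V_Q$. The mechanism is classical: $\cF$ interchanges $T_x$ with $M_{-x}$ and $M_\xi$ with $T_\xi$, so conjugating $\pi(x,\xi)$ by $\cF$ produces a time-frequency shift along $(\xi,-x)=J(x,\xi)$; conjugating $\pi(x,\xi)$ by $\mathfrak{T}_E$ yields $\pi(E^{-1}x,E^\top\xi)=\pi(\cD_E(x,\xi))$; and conjugating $\pi(x,\xi)$ by $\mathfrak{p}_Q$ yields $\pi(x,Qx+\xi)=\pi(V_Q(x,\xi))$ up to a chirp phase. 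The only bookkeeping needing attention is that the normalizing factor $e^{-i\pi x\xi}$ in $\rho$ is carried along consistently---for $\mathfrak{T}_E$ via $(E^{-1}x)\cdot(E^\top\xi)=x\cdot\xi$, and for $\mathfrak{p}_Q$ via the symmetry of $Q$---and both checks are short.

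\emph{From $\Sp$ to $\Mp$.} Granting this, the subgroup $G\le\Mp(n,\bR)$ generated by $\cF$ and the families $\{\mathfrak{T}_E\}_{E\in\GL(n,\bR)}$, $\{\mathfrak{p}_Q\}_{Q\in\Sym(n,\bR)}$ satisfies $\pi^{Mp}(G)=\Sp(n,\bR)$. Since $\ker\pi^{Mp}=\{\pm\id_{L^2}\}$, the proof is finished once one shows $-\id_{L^2}\in G$. I would argue this topologically: the one-parameter subgroups $t\mapsto\mathfrak{p}_{tQ}$ and $t\mapsto\mathfrak{T}_{e^{tA}}$ ($Q\in\Sym(n,\bR)$, $A\in\mathfrak{gl}(n,\bR)$) lie in $G$, and the Lie algebra they generate is all of $\mathfrak{sp}(n,\bR)$; hence $G$ contains a neighbourhood of $\id_{L^2}$, so $G$ is open---hence closed---in the connected group $\Mp(n,\bR)$, forcing $G=\Mp(n,\bR)$. (For odd $n$ one can also see $-\id_{L^2}\in G$ at once from Fourier inversion, $\cF^4=(-1)^n\id_{L^2}$; for even $n$ a hands-on alternative compares $\cF^2$ with $\mathfrak{T}_{-I_n}$---both lying over $J^2=-I_{2n}$---and tracks the square-root normalization in \eqref{TEMp}.) Pinning down that $-\id_{L^2}\in G$ is the one genuinely non-formal point and the step I expect to be the main obstacle; the rest is routine.

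\emph{Isomorphisms of $\cS$ and $\cS'$.} For the ``consequently'', each of the three generators is a topological isomorphism of $\cS(\rd)$: this is the classical theory of the Fourier transform for $\cF$; for $\mathfrak{T}_E$ it is immediate since $f\mapsto f(E\,\cdot)$ and $f\mapsto f(E^{-1}\,\cdot)$ are continuous on $\cS(\rd)$; and for $\mathfrak{p}_Q$ it follows because multiplication by the chirp $\Phi_Q$ of \eqref{defPhi}, or by $\Phi_{-Q}=\Phi_Q^{-1}$---smooth functions all of whose derivatives grow at most polynomially---maps $\cS(\rd)$ continuously into itself. By the first part every $\hat S\in\Mp(n,\bR)$, and likewise $\hat S^{-1}\in\Mp(n,\bR)$, is a finite product of such generators, so $\hat S$ restricts to a topological isomorphism of $\cS(\rd)$. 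Defining $\hat S$ on $\cS'(\rd)$ by $\la\hat Sf,g\ra=\la f,\hat S^{-1}g\ra$ then gives a topological isomorphism of $\cS'(\rd)$, and this is consistent with the $L^2$-action because $\hat S$ is unitary on $L^2(\rd)$.
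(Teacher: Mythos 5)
The paper gives no proof of this proposition: it is stated as known and referred to the literature (Cordero--Rodino's monograph, de Gosson, Folland, Gr\"ochenig). So the only thing to assess is whether your self-contained argument is sound.

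Your step that computes the projections $\pi^{Mp}(\cF)=J$, $\pi^{Mp}(\mathfrak{T}_E)=\cD_E$, $\pi^{Mp}(\mathfrak{p}_Q)=V_Q$ is correct, and so is the last paragraph on $\cS$ and $\cS'$. The genuine gap is in the middle step, precisely where you locate the obstacle. You claim that the Lie algebra generated by the tangent vectors of $t\mapsto\mathfrak{p}_{tQ}$ and $t\mapsto\mathfrak{T}_{e^{tA}}$ is all of $\mathfrak{sp}(n,\bR)$. That is false: differentiating at $t=0$ gives precisely
\begin{equation}
\begin{pmatrix} 0 & 0 \\ Q & 0 \end{pmatrix} \quad (Q\in\Sym(n,\bR)), \qquad
\begin{pmatrix} A & 0 \\ 0 & -A^\top \end{pmatrix} \quad (A\in\mathfrak{gl}(n,\bR)),
\end{equation}
and the Lie algebra these span and close under brackets is the block-lower-triangular maximal parabolic
\begin{equation}
\mathfrak{q}=\Big\{\begin{pmatrix} A & 0 \\ C & -A^\top \end{pmatrix} : C=C^\top\Big\},
\end{equation}
of dimension $\tfrac{3n^2+n}{2}$, whereas $\dim\mathfrak{sp}(n,\bR)=2n^2+n$. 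One never reaches an upper-right block by bracketing inside $\mathfrak{q}$, so ``$G$ contains a neighbourhood of $\id_{L^2}$'' does not follow from what you wrote. The fix is cheap: the subgroup $G$ also contains the conjugated one-parameter families $t\mapsto\cF\,\mathfrak{p}_{tQ}\,\cF^{-1}$, with projections $JV_{tQ}J^{-1}=\bigl(\begin{smallmatrix} I & -tQ \\ 0 & I\end{smallmatrix}\bigr)$, whose tangent vectors supply the upper-right blocks; together with $\mathfrak{q}$ these do generate all of $\mathfrak{sp}(n,\bR)$, and your openness/connectedness argument then goes through. I'd also flag that your ``hands-on'' alternative for even $n$ (comparing $\cF^{2}$ with $\mathfrak{T}_{-I_n}$) is not a self-contained argument: whether $\mathfrak{T}_{-I_n}^{-1}\cF^{2}=\pm\id$ depends on $n\bmod 4$ and on the branch used for $\det(-I_n)^{1/2}$ in the Maslov normalization \eqref{TEMp}, so it can perfectly well come out as $+\id$ and teach you nothing---you acknowledge this, but as written it is not a reliable fallback. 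The Lie-algebra route, once corrected as above, is the one to keep.
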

	
	\begin{example}\label{exampleMp}
		\begin{enumerate}[(i)]
			\item The Fourier transform \eqref{FTdef} is a metaplectic operator, and its corresponding projection is $J$, defined as in \eqref{defJ}.
			\item For every $E\in\GL(n,\bR)$ and every $Q\in\Sym(n,\bR)$, we have $\pi^{Mp}(\mathfrak{T}_E)=\cD_E$ and $\pi^{Mp}(\mathfrak{p}_Q)=V_Q$, where the matrices involved are defined in \eqref{defDEVQ}.
			\item The partial Fourier transform with respect to the second $n$ variables of $f\in\cS(\mathbb{R}^{2n})$ is defined as
			\begin{equation}\label{defFT2}	
				\cF_2f(x,\xi)=\int_{\mathbb{R}^{n}}e^{-2\pi i\xi y}f(x,y)dy, \qquad x,\xi\in\mathbb{R}^{n}.
			\end{equation}
			It defines a metaplectic operator on $L^2(\rdd)$ and its projection is the symplectic interchange
			\begin{equation}\label{defAFT2}	
				\cA_{FT2}=\begin{pmatrix}
					I_n & 0_n & 0_n & 0_n\\
					0_n & 0_n & 0_n & I_n\\
					0_n & 0_n & I_n & 0_n\\
					0_n & -I_n & 0_n & 0_n 
				\end{pmatrix}.
			\end{equation}
		\end{enumerate}
	\end{example}
	
	We will use the following two results concerning tensor products of metaplectic operators and the interchange of metaplectic operators and complex conjugation, respectively. 
	
	\begin{proposition}[Appendix B, \cite{cordero2023symplectic}]\label{propGC1}
		Let $\hat S_1,\hat S_2\in\Mp(n,\bR)$. There exists a unique metaplectic operator $\hat S\in\Mp(2n,\bR)$ such that $\hat S(f\otimes g)=\hat S_1f\otimes \hat S_2g$ for every $f,g\in L^2(\rd)$. If
		\begin{equation}
			S_j=\begin{pmatrix}
				A_j & B_j\\
				C_j & D_j
			\end{pmatrix}, \qquad j=1,2
		\end{equation}
		are the projections of $S_1$ and $S_2$, then the projection of $\hat S$ is the symplectic matrix
		\begin{equation}\label{tensmates}
			S=\left(\begin{array}{cc|cc}
				A_1 & 0_n & B_1 & 0_n\\
				0_n & A_2 & 0_n & B_2\\
				\hline
				C_1 & 0_n & D_1 & 0_n\\
				0_n & C_2 & 0_n & D_2\\
			\end{array}\right).
		\end{equation}
		We denote $S=S_1\otimes S_2$ and $\hat S=\hat S_1\otimes \hat S_2$.
	\end{proposition}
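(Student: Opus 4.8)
The plan is to reduce everything to the generators of $\Mp(n,\bR)$ given in Proposition \ref{propContMetap}, namely $\cF$, $\mathfrak{T}_E$, and $\mathfrak{p}_Q$. First I would establish \emph{existence and uniqueness} of $\hat S$ satisfying $\hat S(f\otimes g)=\hat S_1 f\otimes \hat S_2 g$: since the simple tensors $f\otimes g$ with $f,g\in L^2(\rd)$ have dense span in $L^2(\rdd)$, the map $f\otimes g\mapsto \hat S_1 f\otimes\hat S_2 g$ extends \emph{at most} one way to a bounded operator on $L^2(\rdd)$, giving uniqueness; for existence, note that $\hat S_1\otimes\hat S_2$ in the naive sense is a well-defined unitary on the Hilbert-space tensor product $L^2(\rd)\hat\otimes L^2(\rd)\cong L^2(\rdd)$ (with $\|\hat S_1\otimes\hat S_2\|=\|\hat S_1\|\,\|\hat S_2\|=1$), so the only real content is that this unitary is \emph{metaplectic}, i.e. lies in $\Mp(2n,\bR)$, and that its projection is the matrix in \eqref{tensmates}.

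The core of the argument is then multiplicative: if the claim holds for pairs $(\hat S_1,\hat S_2)$ and $(\hat S_1',\hat S_2')$, it holds for $(\hat S_1\hat S_1',\hat S_2\hat S_2')$, because $(\hat S_1\otimes\hat S_2)(\hat S_1'\otimes\hat S_2')(f\otimes g)=\hat S_1\hat S_1'f\otimes\hat S_2\hat S_2'g$, and on the symplectic side the block matrix \eqref{tensmates} is easily checked to be multiplicative in $(S_1,S_2)$ (the off-diagonal $0_n$ blocks are preserved under the product, and each diagonal $2\times 2$-block-row multiplies as the ordinary product $S_1 S_1'$, resp. $S_2 S_2'$). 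Since every element of $\Mp(n,\bR)$ is a finite product of the generators \eqref{FTMp}--\eqref{pQMp}, it suffices to verify the statement when $\hat S_1$ and $\hat S_2$ are each one of these generators (and when one of them is the identity, which is the trivial case $\hat S_2=\id$, handled directly since $\id\otimes\id=\id$ projects to $I_{4n}$). For $\hat S_1=\mathfrak{p}_{Q_1}$, $\hat S_2=\mathfrak{p}_{Q_2}$ one has $(\mathfrak{p}_{Q_1}f)\otimes(\mathfrak{p}_{Q_2}g)(x_1,x_2)=e^{i\pi Q_1 x_1\cdot x_1}e^{i\pi Q_2 x_2\cdot x_2}f(x_1)g(x_2)$, which is multiplication by $\Phi_{\diag(Q_1,Q_2)}$, hence $\mathfrak{p}_{\diag(Q_1,Q_2)}$, whose projection is $V_{\diag(Q_1,Q_2)}$ — exactly \eqref{tensmates} with $A_j=D_j=I_n$, $B_j=0_n$, $C_j=Q_j$. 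Similarly $\mathfrak{T}_{E_1}\otimes\mathfrak{T}_{E_2}=\mathfrak{T}_{\diag(E_1,E_2)}$ (up to the irrelevant Maslov phase) with projection $\cD_{\diag(E_1,E_2)}$, matching \eqref{tensmates} with $A_j=E_j^{-1}$, $D_j=E_j^\top$; and $\cF\otimes\cF$ is the full Fourier transform on $\rdd$, whose symplectic projection is the $4n\times 4n$ matrix $\begin{pmatrix}0&I_{2n}\\-I_{2n}&0\end{pmatrix}$ after the obvious coordinate reordering, again an instance of \eqref{tensmates}. The remaining mixed cases (e.g. $\cF\otimes\mathfrak{p}_{Q}$) follow by writing each factor as the relevant generator tensored with $\id$ and invoking multiplicativity, or are checked directly by the same one-line computation.

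Alternatively — and perhaps more cleanly — one can bypass the generator bookkeeping by verifying the \emph{intertwining relation} \eqref{intertMp} directly: the Schrödinger representation of the $(2n{+}1)$-dimensional Heisenberg group factors, on simple tensors, as $\rho_{2n}\big((x_1,x_2,\xi_1,\xi_2);\tau\big)(f\otimes g)=e^{2\pi i\tau}\,\rho_n(x_1,\xi_1;0)f\otimes\rho_n(x_2,\xi_2;0)g$ up to the cocycle phase, so conjugating by $\hat S_1\otimes\hat S_2$ turns this into $\rho_n(S_1(x_1,\xi_1);0)f\otimes\rho_n(S_2(x_2,\xi_2);0)g$, which is precisely $\rho_{2n}\big(S(x_1,x_2,\xi_1,\xi_2);\tau\big)(f\otimes g)$ for $S$ the matrix \eqref{tensmates}; density of simple tensors then gives \eqref{intertMp} on all of $L^2(\rdd)$, so $\hat S_1\otimes\hat S_2$ is metaplectic with the asserted projection. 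The main obstacle in either route is purely notational bookkeeping: keeping track of the variable ordering $(x_1,x_2,\xi_1,\xi_2)$ versus $(x_1,\xi_1,x_2,\xi_2)$ so that the block structure \eqref{tensmates} comes out in the stated form, and absorbing the Maslov/phase-factor ambiguities, which are harmless since metaplectic operators are only defined up to a phase. No genuine analytic difficulty arises.
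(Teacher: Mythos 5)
The paper gives no proof of this proposition — it is imported verbatim with the citation to Appendix B of \cite{cordero2023symplectic} — so there is nothing internal to compare your argument against. On its own merits your proposal is correct and complete. Both routes you sketch work: the generator decomposition (which is the standard one, and the one most naturally establishing that $\hat S_1\otimes\hat S_2$ lands in the specific double cover $\Mp(2n,\bR)$ rather than merely being metaplectic up to an arbitrary phase), and the direct verification of the intertwining relation \eqref{intertMp} on simple tensors, which cleanly identifies the symplectic projection as \eqref{tensmates} but, taken alone, only pins down $\hat S_1\otimes\hat S_2$ up to a unimodular constant. You are right that the surviving phase ambiguity is at worst a sign, absorbed by the $\pm\id$ kernel of $\pi^{Mp}$, and your explicit checks ($\mathfrak{p}_{Q_1}\otimes\mathfrak{p}_{Q_2}=\mathfrak{p}_{\diag(Q_1,Q_2)}$, $\mathfrak{T}_{E_1}\otimes\mathfrak{T}_{E_2}=\mathfrak{T}_{\diag(E_1,E_2)}$, $\cF\otimes\cF=\cF_{2n}$) together with multiplicativity of the block structure in \eqref{tensmates} carry the argument. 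One small notational point: since $\Mp$-generators do not generally commute, the reduction to generators should proceed via $\hat S_1\otimes\hat S_2=(\hat S_1\otimes\id)(\id\otimes\hat S_2)$ and then factor each side through $G\otimes\id$ and $\id\otimes H$ for $G,H$ generators, rather than through $G\otimes H$ with both being generators; you gesture at this in your final sentence on the mixed cases, but it would be cleaner to set it up that way from the start.
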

	
	\begin{proposition}[Proposition A.2, \cite{cordero2023metaplectic}]\label{propGC2}
		Let $\hat S\in \Mp(n,\bR)$ have projection $S$ with blocks \eqref{blockS}. Then, the operator
		\begin{equation}
			\hat{T}f=\overline{\hat S\bar f}, \qquad f\in L^2(\rd)
		\end{equation}
		is metaplectic, and its projection is the symplectic matrix
		\begin{equation}\label{conjmates}
			T=\begin{pmatrix}
				A & -B\\
				-C & D
			\end{pmatrix}.
		\end{equation}
		We denote $T=\bar S$ and, correspondingly, $\hat T=\hat{\bar S}$.
	\end{proposition}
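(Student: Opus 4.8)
The plan is to realize $\hat T$ as the conjugate of $\hat S$ by the antilinear complex-conjugation operator and then transport the defining intertwining relation \eqref{intertMp}. Write $\cC$ for the antilinear involution $\cC f = \bar f$ on $L^2(\rd)$, so that $\hat T = \cC\hat S\cC$. The first step is bookkeeping: $\cC$ is an isometry with $\cC^2 = \Id$, hence $\cC\hat S\cC$ is linear (antilinear $\circ$ linear $\circ$ antilinear), unitary as a composition of isometries, and boundedly invertible with $(\cC\hat S\cC)^{-1} = \cC\hat S^{-1}\cC$. So $\hat T$ is a well-defined unitary operator on $L^2(\rd)$.

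Second, I would check that the candidate $T$ of \eqref{conjmates} is symplectic. This is immediate from the block criterion \eqref{Sp1}--\eqref{Sp3}: the relations $A^\top C = C^\top A$, $B^\top D = D^\top B$, $A^\top D - C^\top B = I_n$ valid for $S$ are, respectively, identical to $A^\top(-C) = (-C)^\top A$, $(-B)^\top D = D^\top(-B)$, $A^\top D - (-C)^\top(-B) = I_n$, i.e.\ the block criterion for $T$. (Equivalently, $T = LSL$ with $L = \diag(I_n,-I_n)$; although $L$ is anti-symplectic, $L^\top JL = -J$, the product $LSL$ is symplectic whenever $S$ is.)

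The core of the argument is the elementary identity
\[
\cC\,\rho(x,\xi;\tau)\,\cC = \rho(x,-\xi;-\tau), \qquad x,\xi\in\rd, \ \tau\in\bR,
\]
obtained by conjugating the phases in \eqref{eq19}. Conjugating $\hat S\rho(x,\xi;\tau)\hat S^{-1} = \rho(S(x,\xi);\tau)$ by $\cC$ and inserting $\cC^2 = \Id$ then gives
\[
\hat T\rho(x,\xi;\tau)\hat T^{-1} = \cC\hat S\big(\cC\rho(x,\xi;\tau)\cC\big)\hat S^{-1}\cC = \cC\,\rho\big(S(x,-\xi);-\tau\big)\,\cC = \rho\big((y,-\eta);\tau\big),
\]
where $(y,\eta) = S(x,-\xi)$. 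From the block form of $S$ one reads $y = Ax - B\xi$ and $\eta = Cx - D\xi$, hence $(y,-\eta) = (Ax - B\xi,\ -Cx + D\xi) = T(x,\xi)$. Thus $\hat T$ is unitary and satisfies \eqref{intertMp} with the symplectic matrix $T$; by irreducibility of $\rho$ the projection is unambiguous (any two operators intertwining $\rho$ with $\rho\circ T$ differ by a phase factor), so $\hat T$ is metaplectic with projection $T$, which is the claim.

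I do not expect a real obstacle; the only delicate points are bookkeeping ones. One is tracking the sign change of the central variable $\tau$ across the two conjugations (harmless, since the projection ignores the center). The other arises only if one insists that $\hat T$ lie in the two-fold cover $\Mp(n,\bR)$ in the strict sense, rather than merely being a unitary satisfying \eqref{intertMp}: then one verifies that conjugation by $\cC$ sends the generators of Proposition \ref{propContMetap} back into $\Mp(n,\bR)$, namely $\cC\cF\cC = \cF^{-1}$, $\cC\mathfrak p_Q\cC = \mathfrak p_{-Q}$, and $\cC\mathfrak T_E\cC = \pm\mathfrak T_E$, and combines this with the multiplicativity $\cC\hat S_1\cC\cdot\cC\hat S_2\cC = \cC\hat S_1\hat S_2\cC$ to conclude $\hat T\in\Mp(n,\bR)$.
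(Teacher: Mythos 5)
Your argument is correct. Since the paper merely cites this result (Proposition A.2 of \cite{cordero2023metaplectic}) without reproducing a proof, there is no in-paper argument to compare against; what you give is the natural and standard route. The identity $\cC\rho(x,\xi;\tau)\cC=\rho(x,-\xi;-\tau)$ follows directly from \eqref{eq19}, the conjugation of \eqref{intertMp} by $\cC$ is set up correctly, and the bookkeeping $(y,-\eta)=(Ax-B\xi,\,-Cx+D\xi)=T(x,\xi)$ with $S(x,-\xi)=(y,\eta)$ lands exactly on \eqref{conjmates}. The block-criterion check of symplecticity (equivalently $T=LSL$, $L=\diag(I_n,-I_n)$ anti-symplectic) is fine. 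You are also right to flag the distinction between ``$\hat T$ is unitary and intertwines $\rho$ with $\rho\circ T$'' and ``$\hat T$ lies in the two-fold cover $\Mp(n,\bR)$''; your closing verification on the generators of Proposition \ref{propContMetap}, together with the multiplicativity $\cC\hat S_1\cC\cdot\cC\hat S_2\cC=\cC\hat S_1\hat S_2\cC$ and the fact that $\pm\id_{L^2}\in\Mp(n,\bR)$, closes that gap. (One can check explicitly that $\cC\mathfrak T_E\cC=i^{-2m}\mathfrak T_E=(-1)^m\mathfrak T_E$, consistent with your $\pm$ sign.)
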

	Finally, the following covariance property for the Wigner distribution \eqref{defWigner-gg} holds (see \cite{lernermetrics}, p. 61). 
	\begin{proposition}\label{covpropWigner}
	Let $\hat{S}$ be a metaplectic operator with projection $S \in \Sp(n,\bR)$. Then, if $f,g \in L^2(\mathbb{R}^n)$ one has
	\[
	W(\hat{S}f,\hat{S}g)(X)=W(f,g)(S^{-1}(X)), \quad X \in \mathbb{R}^{2n}.
	\]
	\end{proposition}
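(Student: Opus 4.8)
The plan is to reduce the identity to a short verification on the generators of $\Mp(n,\bR)$ exhibited in Proposition \ref{propContMetap}, and then propagate it by composition. Two elementary observations make this work. First, $W$ is sesquilinear, so $W(cf,cg)=|c|^2W(f,g)=W(f,g)$ for every $c\in\bT$; hence the covariance identity is insensitive to the phase factor attached to $\hat S$, and it suffices to treat one metaplectic operator above each symplectic matrix. Second, the identity is stable under composition: if it holds for $\hat S_1$ and $\hat S_2$, then writing $f'=\hat S_2f$, $g'=\hat S_2g$ gives
\[
W(\hat S_1\hat S_2 f,\hat S_1\hat S_2 g)(X)=W(f',g')(S_1^{-1}X)=W(f,g)(S_2^{-1}S_1^{-1}X)=W(f,g)\big((S_1S_2)^{-1}X\big),
\]
which is exactly the identity for $\widehat{S_1S_2}$ (equal to $\pm\hat S_1\hat S_2$, a difference absorbed by the first observation). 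Since $\Mp(n,\bR)$ is generated by $\cF$, the $\mathfrak{T}_E$ and the $\mathfrak{p}_Q$, whose projections are $J$, $\cD_E$ and $V_Q$, it is enough to check these three cases.

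I would carry out the three computations for $f,g\in\cS(\rd)$, where \eqref{defWigner-gg} is an absolutely convergent integral. For the chirp $\mathfrak{p}_Qf=\Phi_Qf$, using the symmetry of $Q$ to expand $Q(x+\tfrac y2)\cdot(x+\tfrac y2)-Q(x-\tfrac y2)\cdot(x-\tfrac y2)=2\,Qx\cdot y$ shows that the quadratic phase contributes only the extra factor $e^{2\pi i\,Qx\cdot y}$ inside \eqref{defWigner-gg}; this merges with $e^{-2\pi i\xi y}$ into $e^{-2\pi i(\xi-Qx)y}$, so that $W(\mathfrak{p}_Qf,\mathfrak{p}_Qg)(x,\xi)=W(f,g)(x,\xi-Qx)=W(f,g)(V_Q^{-1}(x,\xi))$. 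For $\mathfrak{T}_E$, the substitution $y\mapsto E^{-1}y$ in \eqref{defWigner-gg} produces a Jacobian cancelling the normalizing prefactor $|\det E|$ carried by $\mathfrak{T}_E$, and one is left with $W(\mathfrak{T}_Ef,\mathfrak{T}_Eg)(x,\xi)=W(f,g)(Ex,E^{-\top}\xi)=W(f,g)(\cD_E^{-1}(x,\xi))$. For $\cF$, I would invoke the classical Fourier-covariance of the Wigner distribution, $W(\hat f,\hat g)(x,\xi)=W(f,g)(-\xi,x)$ (see \cite{folland1989harmonic,grochenig2013foundations}, or reprove it with Parseval's formula), together with the fact that $J^{-1}=-J$ maps $(x,\xi)$ to $(-\xi,x)$.

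Combining the three cases with the composition step gives the identity for all $S\in\Sp(n,\bR)$ and all $f,g\in\cS(\rd)$. To finish, I would extend to $f,g\in L^2(\rd)$ by density: metaplectic operators are unitary on $L^2(\rd)$ and, by the continuity of $W$ on $L^2(\rd)\times L^2(\rd)$ recalled in Section \ref{sec.pre}, both sides of the asserted equality depend continuously on $(f,g)$ and agree on the dense subspace $\cS(\rd)\times\cS(\rd)$. I do not expect a genuine obstacle here; the points deserving attention are bookkeeping ones: tracking the Maslov-index and phase constants (neutralised by sesquilinearity), matching the orientation conventions hidden in $J^{-1}$, $\cD_E^{-1}$ and $V_Q^{-1}$, and, if one wants a self-contained argument, supplying the short Parseval computation behind the Fourier case.
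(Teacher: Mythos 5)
Your proof is correct. The paper does not actually supply an argument here: it states the covariance property and cites Lerner's book (p.~61). Your generator-based proof (reduce to $\cF$, $\mathfrak{T}_E$, $\mathfrak{p}_Q$ via Proposition~\ref{propContMetap}, absorb phase factors by sesquilinearity of $W$, then extend from $\cS$ to $L^2$ by unitarity of $\hat S$ and continuity of $W$) is the standard route and all three generator computations check out, including the inverse conventions $J^{-1}(x,\xi)=(-\xi,x)$, $\cD_E^{-1}(x,\xi)=(Ex,E^{-\top}\xi)$ and $V_Q^{-1}(x,\xi)=(x,\xi-Qx)$.
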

	
\subsection{Metaplectic Wigner distributions}\cite{ cordero2024wigner,cordero2022wigner}
Our theory is set in the framework of metaplectic Wigner distributions, a class of time-frequency representations generalizing the Wigner distribution using metaplectic operators. They were defined in \cite{cordero2022wigner}, and their properties were later established in \cite{cordero2023symplectic,cordero2024excursus,cordero2024metaplectic,cordero2024wigner,cordero2024unified,cordero2023characterization,cordero2023metaplectic}.
\begin{definition} 
Let $\hat\cA\in \Mp(2n,\bR)$ and let $\cA \in \Sp(2n,\bR)$ be its projection. The corresponding {\em metaplectic Wigner distribution}, or $\cA$-{\em Wigner distribution}, is defined as
\begin{equation}
	W_\cA(f,g)=\hat\cA(f\otimes\bar g), \qquad f,g\in L^2(\mathbb{R}^{n}).
\end{equation}
If $f=g$, we write $W_\cA f=W_\cA(f,f)$.
\end{definition}
\begin{example}\label{exampleMWDs}
	\begin{enumerate}[(i)]
		\item The STFT defined in \eqref{def.STFTintro} is a metaplectic Wigner distribution, indeed 
		 \begin{equation}\label{STFTMp}
		 V_gf=\cF_2\mathfrak{T}_{E_{st}}(f\otimes\bar g), 
		 \end{equation}
		where
		 \begin{equation}
		 	E_{st}=\begin{pmatrix}
				0_n & I_n\\
				-I_n & I_n
			\end{pmatrix},
		 \end{equation}
		 and $\cF_2$ is defined as in Example \ref{exampleMp}. Precisely, $V_gf=\widehat{\cA_{st}}(f\otimes\bar g)$, with
		 \begin{equation}\label{Ast}
			\cA_{st}=\begin{pmatrix}
				I_n & -I_n & 0_n & 0_n\\
				0_n & 0_n & I_n & I_n\\
				0_n & 0_n & 0_n & -I_n\\
				-I_n & 0_n & 0_n & 0_n
			\end{pmatrix},
		\end{equation}
		up to a phase factor.
		 \item The (cross-)$\tau$-Wigner distributions \cite{boggiatto2010time,janssen1985bilinear}, defined by
		 \begin{equation}\label{defWtau}
		 	W_\tau(f,g)(x,\xi)=\int_{\mathbb{R}^{n}}e^{-2\pi i\xi y}f(x+\tau y)\overline{g(x-(1-\tau)y)}dy, \qquad f,g\in L^2(\mathbb{R}^{n}),
		 \end{equation}
		 where $\tau\in\bR$, are metaplectic Wigner distributions. This is due to the fact that 		 \begin{equation}
		 	W_\tau(f,g)=\cF_2\mathfrak{T}_{E_\tau}(f\otimes\bar g), \qquad f,g\in L^2(\rd),
		 \end{equation}
		 where
		  \begin{equation}
		 	E_{\tau}=\begin{pmatrix}
				 I_n & \tau I_n\\
				I_n & -(1-\tau) I_n
			\end{pmatrix}.
		 \end{equation}
		 For $\tau=1/2$ and $\tau=0$ we retrieve the classical (cross-)Wigner distribution \eqref{defWigner-gg} and the Rihackzek distribution \cite{Kirkhood1933,Rihaczek1968}, respectively. Again, $W_\tau(f,g)=\widehat{\cA_\tau}(f\otimes\bar g)$, where
			\begin{equation}\label{Atau}
			\cA_\tau=\begin{pmatrix}
				(1-\tau)I_n & \tau I_n & 0_n & 0_n\\
				0_n & 0_n & \tau I_n & -(1-\tau)I_n\\
				0_n & 0_n & I_n & I_n\\
				-I_n & I_n & 0_n & 0_n
			\end{pmatrix}.
		\end{equation}
		\item An intermediate generalization of the STFT, of the $\tau$-Wigner distributions and of other quadratic time-frequency representations, such as the radar ambiguity function, can be obtained by considering the so-called {\em matrix Wigner distributions}. They are defined as
		\begin{equation}\label{defMatWD}
			W_{\cA_M}(f,g)=W^{(M)}(f,g)=\cF_2\mathfrak{T}_M(f\otimes\bar g), \qquad f,g\in L^2(\rd),
		\end{equation} 
		for a fixed $M\in \GL(2n,\bR)$. If 
		\begin{equation}\label{blockM}
			M=\begin{pmatrix}
				M_{11} & M_{12}\\
				M_{21} & M_{22}
			\end{pmatrix}, \qquad M^{-1}=\begin{pmatrix}
				M_{11}' & M_{12}'\\
				M_{21}' & M_{22}'
			\end{pmatrix},
		\end{equation}
		then the projection of $\hat\cA_M$ is
		\begin{equation}\label{defMtWd}
			\cA_M=\begin{pmatrix}
				M_{11}' & M_{12}' & 0_n & 0_n\\
				0_n & 0_n & M_{12}^\top & M_{22}^\top\\
				0_n & 0_n & M_{11}^\top & M_{21}^\top\\
				-M_{21}' & -M_{22}' & 0_n & 0_n
			\end{pmatrix},
		\end{equation}
		see \cite{cordero2024wigner}, where these distributions are called {\em totally Wigner decomposable}.
		\item The modified Wigner distributions in \cite{PrataDias2018}, arising when considering different symplectic forms on $\rdd$, are rescaled Wigner distributions and, therefore, metaplectic Wigner distributions.
		 \item We also mention that many time-frequency representations of the metaplectic type were considered in the last decades, with applications to signal analysis, cf. \cite{Bai2012,Minh1,Pei2001,ZhangLCWD,ZHANG2023108846,ZhangNewWigner} and the references therein. 
	\end{enumerate}
\end{example}
The continuity properties of Proposition \ref{propContMetap} are naturally transferred to metaplectic Wigner distributions.
\begin{proposition}
	Let $W_\cA$ be a metaplectic Wigner distribution.
	\begin{enumerate}[(i)]
	\item $W_\cA:\cS(\mathbb{R}^{n})\times\cS(\mathbb{R}^{n})\to\cS(\mathbb{R}^{2n})$ is continuous.
	\item $W_\cA:L^2(\mathbb{R}^{n})\times L^2(\mathbb{R}^{n})\to L^2(\mathbb{R}^{2n})$ is bounded and Moyal's identity holds:
	\begin{equation}\label{Moyal}
		\la W_\cA(f_1,g_1),W_\cA(f_2,g_2)\ra = \la f_1,f_2\ra\overline{\la g_1,g_2\ra}, \qquad f_1,g_1,f_2,g_2\in L^2(\rd).
	\end{equation}
	\item $W_\cA:\cS'(\mathbb{R}^{n})\times\cS'(\mathbb{R}^{n})\to\cS'(\mathbb{R}^{2n})$ is continuous.
	\end{enumerate}
\end{proposition}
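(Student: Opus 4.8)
The plan is to exploit the defining factorization $W_\cA(f,g)=\hat\cA(f\otimes\bar g)$: the metaplectic Wigner distribution is the composition of the bilinear ``conjugate tensor'' map $(f,g)\mapsto f\otimes\bar g$ with the fixed metaplectic operator $\hat\cA\in\Mp(2n,\bR)$, and each of the three assertions is obtained by tracking this composition through the Gelfand triple $\cS(\rdd)\hookrightarrow L^2(\rdd)\hookrightarrow\cS'(\rdd)$. The facts I would use about $\hat\cA$ are exactly those recorded in Proposition~\ref{propContMetap} (applied with $2n$ in place of $n$): $\hat\cA$ restricts to a topological isomorphism of $\cS(\rdd)$, it is unitary on $L^2(\rdd)$ (the very definition of a metaplectic operator), and it extends to a topological isomorphism of $\cS'(\rdd)$ via $\la\hat\cA F,\Psi\ra=\la F,\hat\cA^{-1}\Psi\ra$. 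About the conjugate tensor map I would use that complex conjugation is a continuous involution of each of $\cS(\rd)$, $L^2(\rd)$, $\cS'(\rd)$, that $(u,v)\mapsto u\otimes v$ is continuous $\cS(\rd)\times\cS(\rd)\to\cS(\rdd)$ (an elementary seminorm estimate $\norm{u\otimes v}_{(\alpha_1,\alpha_2),(\beta_1,\beta_2)}=\norm{u}_{\alpha_1,\beta_1}\norm{v}_{\alpha_2,\beta_2}$), that it satisfies $\la u_1\otimes v_1,u_2\otimes v_2\ra_{L^2(\rdd)}=\la u_1,u_2\ra\,\la v_1,v_2\ra$ on $L^2(\rd)\times L^2(\rd)$ by Fubini, and that it extends to a (separately) continuous map $\cS'(\rd)\times\cS'(\rd)\to\cS'(\rdd)$ by nuclearity of $\cS$.

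Then (i) is immediate: $(f,g)\mapsto f\otimes\bar g$ is continuous into $\cS(\rdd)$ and $\hat\cA$ preserves $\cS(\rdd)$, so $W_\cA$ is continuous $\cS(\rd)\times\cS(\rd)\to\cS(\rdd)$. For (ii), boundedness on $L^2$ follows from $\norm{f\otimes\bar g}_{L^2(\rdd)}=\norm{f}_{L^2}\norm{g}_{L^2}$ and the fact that $\hat\cA$ is an isometry, while Moyal's identity is the chain
\[
\la W_\cA(f_1,g_1),W_\cA(f_2,g_2)\ra=\la\hat\cA(f_1\otimes\bar g_1),\hat\cA(f_2\otimes\bar g_2)\ra=\la f_1\otimes\bar g_1,f_2\otimes\bar g_2\ra=\la f_1,f_2\ra\,\la\bar g_1,\bar g_2\ra=\la f_1,f_2\ra\,\overline{\la g_1,g_2\ra},
\]
using unitarity of $\hat\cA$ in the second equality and $\la\bar g_1,\bar g_2\ra=\overline{\la g_1,g_2\ra}$ in the last. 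Finally (iii) is the same composition at the level of tempered distributions: the conjugate tensor map lands (separately continuously) in $\cS'(\rdd)$ and $\hat\cA$ is a topological automorphism of $\cS'(\rdd)$, so the composition is continuous $\cS'(\rd)\times\cS'(\rd)\to\cS'(\rdd)$.

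I do not expect a genuine obstacle, since the statement is an inheritance from the classical case; the one point that deserves care is the meaning of ``continuous'' in (iii). The tensor product of tempered distributions is only separately continuous — not jointly continuous for the strong dual topologies — so (iii) is to be read in that sense (equivalently, as hypocontinuity on bounded sets), which is the convention already adopted in \cite{cordero2022wigner,cordero2024wigner}; post-composing with the linear operator $\hat\cA$ does not affect this. Alternatively, one may bypass the tensor-product discussion altogether by reducing to the classical Wigner distribution: by Example~\ref{exampleMWDs}(ii) with $\tau=1/2$ one has $W=W_{\cA_{1/2}}$, hence $W_\cA(f,g)=\hat\cB\,W(f,g)$ with $\hat\cB:=\hat\cA\,\widehat{\cA_{1/2}}^{-1}\in\Mp(2n,\bR)$, and (i)--(iii) for $W_\cA$ then follow verbatim from the corresponding properties of $W$ recalled above, together with the mapping properties of the single metaplectic operator $\hat\cB$.
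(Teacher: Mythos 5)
Your proof is correct and matches the argument the paper has in mind: the paper states this proposition without proof, introducing it with the remark that the continuity properties of Proposition~\ref{propContMetap} are ``naturally transferred'' to metaplectic Wigner distributions, which is exactly the factorization $W_\cA=\hat\cA\circ\bigl((f,g)\mapsto f\otimes\bar g\bigr)$ you exploit. Your closing caveat on the meaning of continuity in (iii) and the alternative reduction $W_\cA=\hat\cB\,W$ via $\hat\cB=\hat\cA\,\widehat{\cA_{1/2}}^{-1}$ are both sound, though the paper leaves these points implicit.
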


The many properties of a metaplectic Wigner distribution $W_\cA$ can be explained through the structure of the corresponding projection $\cA\in\Sp(2n,\bR)$. A prototypical example is covariance.

\begin{definition}\label{def29}
	Let $\hat\cA \in \Mp(2n,\mathbb{R})$. The associated Wigner distribution $W_\cA$ is said to be {\em covariant} if for every $x,\xi,y,\eta\in\mathbb{R}^{n}$,
		\begin{equation}
			W_\cA (\pi(x,\xi)f,\pi(x,\xi)g)(y,\eta)=W_\cA(f,g)(y-x,\eta-\xi), \qquad f,g\in \cS(\mathbb{R}^{n}).
		\end{equation}
\end{definition}

Covariant metaplectic Wigner distributions read time-frequency shifts as phase space translations. They can be characterized in terms of their projection 
\begin{equation}\label{blockA}
	\cA=\begin{pmatrix}
		A_{11} & A_{12} & A_{13} & A_{14}\\
		A_{21} & A_{22} & A_{23} & A_{24}\\
		A_{31} & A_{32} & A_{33} & A_{34}\\
		A_{41} & A_{42} & A_{43} & A_{44}
	\end{pmatrix}, \qquad A_{i,j}\in\bR^{n\times n}, \quad i,j=1,\ldots,4
\end{equation}
as follows.

\begin{theorem}\label{thmCovSI}  $W_\cA$ is covariant if and only if 
		\begin{equation}\label{WAcov}
			W_\cA(f,g)=\cF^{-1}\Phi_{-B_\cA}\ast W(f,g), \qquad f,g\in L^2(\mathbb{R}^{n}),
		\end{equation}
		where 
		\begin{equation}\label{defBA}
			B_\cA=\begin{pmatrix}
		A_{13} & \frac{1}{2} I_n-A_{11}\\
		\frac 1 2 I_n-A_{11}^\top & -A_{21}
	\end{pmatrix},
\end{equation}
		and the chirp $\Phi_{-B_\cA}$ is defined as in \eqref{defPhi}.
		
\end{theorem}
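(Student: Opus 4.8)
The plan is to reduce the covariance condition to a computation on the metaplectic intertwining relation \eqref{intertMp}, and then to translate it into the convolution formula \eqref{WAcov} via the Fourier transform. First I would recall the well-known fact that the classical cross-Wigner distribution $W(f,g)$ itself is covariant in the sense that $W(\pi(x,\xi)f,\pi(x,\xi)g)(y,\eta)=W(f,g)(y-x,\eta-\xi)$ (this is Proposition \ref{covpropWigner} applied to the symplectic matrix corresponding to the time-frequency shift, or a direct calculation from \eqref{defWigner-gg}). The key structural observation is that any metaplectic Wigner distribution factors as $W_\cA = \mc{S} \circ W$ for a suitable operator $\mc{S}$ built from $\hat\cA$ and the metaplectic operator $\hat{\cA}_W$ whose projection sends $f\otimes\bar g$ to $W(f,g)$; in fact $W = \cF_2 \mathfrak{T}_{E_{1/2}}$ so $W_\cA = \hat\cA \hat{\cA}_W^{-1} W$, and the composite $\hat\cB := \hat\cA\hat{\cA}_W^{-1}$ is metaplectic on $L^2(\rdd)$. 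So $W_\cA(f,g) = \hat{\cB}\,(W(f,g))$, and covariance of $W_\cA$ becomes the requirement that $\hat{\cB}$ commute with the phase-space translation operators $W(f,g)\mapsto W(f,g)(\cdot - (x,\xi))$ — equivalently that the projection $\cB\in\Sp(2n,\bR)$ fix every translation vector in the appropriate sense.

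Next I would make this precise. Writing $T_{(x,\xi)}^{2n}$ for translation by $(x,\xi)$ on functions on $\rdd$, covariance says $\hat{\cB}\,T^{2n}_{(x,\xi)} = T^{2n}_{(x,\xi)}\,\hat{\cB}$ for all $(x,\xi)$. Translations on $\rdd$ are themselves (projectively) metaplectic — they are modulations after a partial Fourier transform — so by the intertwining relation \eqref{intertMp} the commutation is equivalent to the projection $\cB$ acting trivially on the corresponding subgroup of the Heisenberg group, which forces $\cB$ to be upper block-triangular of a specific shape: $\cB=\begin{pmatrix} I_{2n} & -B_\cA \\ 0_{2n} & I_{2n}\end{pmatrix}$ for some symmetric $B_\cA\in\Sym(2n,\bR)$, by the characterization of symplectic matrices in \eqref{defDEVQ} (a $V_Q$-type matrix). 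Unwinding the metaplectic operator attached to such a $\cB$ using Proposition \ref{propContMetap}, $\hat{\cB}$ is multiplication by the chirp $\Phi_{-B_\cA}$ conjugated by a Fourier transform, i.e. $\hat{\cB} u = \cF^{-1}(\Phi_{-B_\cA}\cdot \hat u) = \cF^{-1}\Phi_{-B_\cA} \ast u$, which is exactly \eqref{WAcov}.

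The remaining task is to identify the block $B_\cA$ explicitly in terms of the entries $A_{ij}$ of $\cA$ in \eqref{blockA}, i.e. to verify formula \eqref{defBA}. For this I would compute the projection $\cA_W\in\Sp(2n,\bR)$ of $W=\cF_2\mathfrak{T}_{E_{1/2}}$ (it is the special case $\tau=1/2$ of \eqref{Atau}), invert it, and multiply $\cA\,\cA_W^{-1}$ blockwise, reading off the top-right $2n\times 2n$ block and matching it against the requirement that $\cB$ be of $V_Q$ form — this pins down $B_\cA$ and simultaneously re-derives the constraints on $\cA$ (namely $A_{12}=A_{14}=0$, $A_{22}=A_{24}=0$, plus the symplectic relations) that are equivalent to covariance. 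The main obstacle I anticipate is purely bookkeeping: carrying the $4n\times 4n$ block multiplication and the inversion of $\cA_W$ carefully enough to land on the precise entries $A_{13}$, $\tfrac12 I_n - A_{11}$, $-A_{21}$ in \eqref{defBA}, and checking that the resulting matrix is automatically symmetric (which it must be, since $\cB$ is symplectic) — the symmetry of $B_\cA$ is a good internal consistency check that $\tfrac12 I_n-A_{11}$ appears transposed in the off-diagonal slots as written. No single step is conceptually hard; the content is the factorization $W_\cA=\hat{\cB}\circ W$ together with the classical covariance of $W$, and the rest is linear algebra in $\Sp(2n,\bR)$.
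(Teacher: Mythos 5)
Your plan rests on a sound reduction: write $W_\cA=\hat\cB\circ W$ with $\hat\cB=\hat\cA\,\hat\cA_W^{-1}\in\Mp(2n,\bR)$, use the classical covariance of $W$ to translate covariance of $W_\cA$ into $\hat\cB\,T_X=T_X\,\hat\cB$ for all translations $T_X$ of $\rdd$, use \eqref{intertMp} to pin down the projection $\cB$, identify the resulting metaplectic operator as a Fourier multiplier (hence a convolution), and then grind out $B_\cA$ by block multiplication. The paper does not display a proof, so I can only assess your argument on its own merits; the strategy is the natural one and, properly executed, does give the theorem. However, three concrete errors appear.

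First, the assertion that ``translations on $\rdd$ are themselves (projectively) metaplectic'' is false: $T_X=\rho(X,0;0)$ lies in the Heisenberg group, not in $\Mp(2n,\bR)$, and neither do modulations. What you actually need is precisely \eqref{intertMp}: $\hat\cB\,\rho(X,0;0)\,\hat\cB^{-1}=\rho(\cB(X,0);0)$, so commutation with every $T_X$ forces $\cB(X,0)=(X,0)$ for all $X$, i.e.\ the left column block of $\cB$ is $\left(\begin{smallmatrix}I_{2n}\\0_{2n}\end{smallmatrix}\right)$, and then the symplectic relations give $\cB=\left(\begin{smallmatrix}I_{2n}& B\\ 0_{2n}& I_{2n}\end{smallmatrix}\right)$ with $B$ symmetric. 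Your conclusion is right, your justification sentence is not.

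Second, you have a sign inconsistency. You write $\cB=\left(\begin{smallmatrix}I& -B_\cA\\ 0& I\end{smallmatrix}\right)$ and also $\hat\cB u=\cF^{-1}(\Phi_{-B_\cA}\hat u)$. But $\cF^{-1}(\Phi_{-B_\cA}\hat u)$ is the Fourier multiplier $\cF^{-1}\mathfrak{p}_{-B_\cA}\cF$, whose projection is $J^{-1}V_{-B_\cA}J=\left(\begin{smallmatrix}I& B_\cA\\ 0& I\end{smallmatrix}\right)$, with a plus sign. Carrying out the block product $\cA\cA_{1/2}^{-1}$ under the constraints $\cB_{11}=I_{2n}$, $\cB_{21}=0_{2n}$ one indeed finds the upper-right block equal to $+B_\cA$ with $B_\cA$ as in \eqref{defBA}; the formula \eqref{WAcov} then follows because $\Phi_{-B_\cA}$ is even, so $\cF\Phi_{-B_\cA}=\cF^{-1}\Phi_{-B_\cA}$. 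Also note that $\left(\begin{smallmatrix}I& B\\ 0& I\end{smallmatrix}\right)$ is not a $V_Q$-type matrix but the $J$-conjugate of one.

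Third, the constraints you predict for covariance, ``$A_{12}=A_{14}=0$, $A_{22}=A_{24}=0$'', are wrong. The block computation yields $A_{12}=I_n-A_{11}$, $A_{14}=A_{13}$, $A_{22}=-A_{21}$, $A_{24}=A_{23}-I_n$, $A_{31}=A_{32}=0$, $A_{33}=A_{34}=I_n$, $A_{41}=-I_n$, $A_{42}=I_n$, $A_{43}=A_{44}=0$, consistent with \eqref{Acov}; none of the blocks you listed vanish in general. None of these slips is fatal to the plan, but as written the middle step does not close, and the anticipated ``pure bookkeeping'' contains the actual content of the sign and of the block constraints, so it would have to be done explicitly before the argument could be accepted.
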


We also mention shift-invertible distributions, which play a fundamental role in classifying signals in terms of their time-frequency concentration, see \cite{cordero2023symplectic} and \cite{cordero2024metaplectic} for a more detailed discussion. 

\begin{definition}\label{def292}
Let $\hat \cA \in \Mp(2n,\mathbb{R})$ have projection $\cA$ with $n\times n$ blocks \eqref{blockA}. We say that $W_\cA$ is {\em shift-invertible} if
	\begin{equation}\label{EA}
		E_\cA=\begin{pmatrix}
			A_{11} & A_{13}\\
			A_{21} & A_{23}
			\end{pmatrix}\in\GL(2n,\bR).
	\end{equation}
\end{definition}

\subsection{Pseudodifferential operators}\cite{hormander3,lernermetrics} If $f,g\in\cS(\rd)$, then $W(f,g)\in\cS(\rdd)$. If $a\in\cS'(\rdd)$, we may consider the {\em Weyl-quantized pseudodifferential operator} (or, {\em Weyl operator}) with {\em symbol} $a$, that is the linear operator continuous from $\mathcal{S}(\R^n)$ to $\mathcal{S}'(\R^n)$ defined by
\begin{equation}\label{defOpwSp}
\langle \mathrm{Op}^\w(a)f,g \rangle=\langle a, W(g,f) \rangle, \quad f,g \in \mathcal{S}(\R^n).
\end{equation}

In this work, we focus on symbols without decay, belonging to so called $S_{0,0}^0$-H\"ormander class, for which we recall the definition here. 
\begin{definition}
Let $a\in C^\infty(\mathbb{R}^{2n})$. We say that {\em $a$ is a symbol of order $0$ without decay}, and write $a \in S_{0,0}^0(\mathbb{R}^{2n})$, if for each $\alpha \in \mathbb{N}_0^{2n}$ there exists a constant $C_\alpha>0$ such that 
\[
\abs{\partial_X^\alpha a(X)} \leq C_\alpha, \quad \forall X \in \mathbb{R}^{2n}. 
\]
\end{definition}

If $a\in S^0_{0,0}(\rdd)$, formula \eqref{defOpwSp} can be restated as the oscillatory integral

\begin{equation}
	\Op^{\mathrm{w}}(a)f(x)=\int_{\rdd}e^{2\pi i(x-y)\xi}a\Big(\frac{x+y}{2},\xi\Big)f(y)dyd\xi, \qquad f\in\cS(\rd),
\end{equation}
The Schwartz kernel $k_a$ of $\Op^{\mathrm{w}}(a)$ is related to $a$ by
\begin{equation}\label{kersym}
	a(x,\xi)=\int_{\rd}k_a\left(x+\frac y 2,x-\frac y 2 \right)e^{-2\pi i\xi y}dy.
\end{equation}
We conclude this subsection by recalling two fundamental properties of Weyl operators: their metaplectic invariance and a composition result. We limit here to state these results in the form we need them, and we refer to \cite[Theorem 2.1.2]{lernermetrics} and \cite[Chapter 18]{hormander3}, respectively, for a more detailed discussion.
\begin{theorem}
Let $\hat{S} \in \Mp(n,\mathbb{R})$ with projection $S \in \Sp(n,\mathbb{R})$ and let $a \in \mathcal{S}'(\R^{2n})$. Then
\begin{equation}\label{intertOpwMetap}
	\hat S^{-1}\Op^{\mathrm{w}}(a)\hat S=\Op^{\mathrm{w}}(a\circ S).
\end{equation}
\end{theorem}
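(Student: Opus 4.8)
The plan is to verify the identity by pairing both sides against Schwartz functions and unwinding the definition \eqref{defOpwSp} of the Weyl quantization, using the covariance of the Wigner distribution (Proposition \ref{covpropWigner}) as the one nontrivial input. Concretely, fix $f,g\in\cS(\rd)$ and $a\in\cS'(\rdd)$. First I would move $\hat S$ across the pairing: since metaplectic operators are unitary on $L^2$ and extend to $\cS'$ by the duality rule $\la \hat S u,v\ra=\la u,\hat S^{-1}v\ra$ recorded in Proposition \ref{propContMetap}, we get
\[
	\la \hat S^{-1}\Op^{\mathrm{w}}(a)\hat S f,\,g\ra=\la \Op^{\mathrm{w}}(a)\hat S f,\,\hat S g\ra.
\]

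Next I would apply the definition \eqref{defOpwSp} to the right-hand side, obtaining $\la a,\,W(\hat S g,\hat S f)\ra$, and then invoke Proposition \ref{covpropWigner}, which gives $W(\hat S g,\hat S f)(X)=W(g,f)(S^{-1}X)$. Thus the expression becomes $\la a,\,W(g,f)\circ S^{-1}\ra$, where $W(g,f)\in\cS(\rdd)$ and $W(g,f)\circ S^{-1}$ is again Schwartz because $S^{-1}$ is a linear isomorphism. Finally, I would use the definition of the pullback of a tempered distribution through the linear isomorphism $S$, namely $\la a\circ S,\psi\ra=|\det S|^{-1}\la a,\psi\circ S^{-1}\ra$, together with $\det S=1$ for $S\in\Sp(n,\bR)$, to rewrite $\la a,\,W(g,f)\circ S^{-1}\ra=\la a\circ S,\,W(g,f)\ra=\la \Op^{\mathrm{w}}(a\circ S)f,\,g\ra$. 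Since $f,g\in\cS(\rd)$ were arbitrary and both sides are continuous operators $\cS(\rd)\to\cS'(\rd)$, the claimed operator identity \eqref{intertOpwMetap} follows.

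I do not expect a genuine obstacle here: the content is entirely encoded in the covariance of $W$, which is quoted, and in the unit Jacobian of symplectic maps. The only points requiring a little care are bookkeeping ones — the antilinearity of the pairing in its second slot (which is harmless because only $a$, not $W(g,f)$, is conjugated in the manipulations above), and checking that $a\circ S\in\cS'(\rdd)$ is well defined and that all pairings extend by continuity from the dense subspace $\cS$. Both are standard, so the proof is short once Proposition \ref{covpropWigner} is in hand; for this reason one typically just cites \cite[Theorem 2.1.2]{lernermetrics} or \cite[Chapter 18]{hormander3}.
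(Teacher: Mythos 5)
Your proof is correct and is the standard argument; the paper itself does not prove this theorem but simply cites Lerner \cite[Theorem 2.1.2]{lernermetrics} and H\"ormander \cite[Chapter 18]{hormander3}, where essentially the same chain (duality, the definition \eqref{defOpwSp}, symplectic covariance of $W$, unit Jacobian) appears. The bookkeeping points you flag are indeed the only ones needing care, and you handle them correctly: the antilinearity in the second slot is harmless because the pullback by $S$ commutes with complex conjugation, and $a\circ S$ is a well-defined tempered distribution since $S$ is a linear isomorphism.
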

\begin{theorem}\label{compweyl} 
Let $a \in S_{0,0}^0(\R^{2n})$ and $b \in S_{0,0}^0(\R^{2n})$. Then, the composition $\mathrm{Op}^{\mathrm{w}}(a)\mathrm{Op}^{\mathrm{w}}(b)$ is well defined and
\[
\mathrm{Op}^{\mathrm{w}}(a)\mathrm{Op}^{\mathrm{w}}(b)=\mathrm{Op}^{\mathrm{w}}(c),
\]
with $c \in S_{0,0}^0(\rdd)$.
\end{theorem}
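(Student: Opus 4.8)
The plan is to realize the composition symbol as the Weyl (twisted) product $c=a\# b$ and to verify that this product preserves the class $S^0_{0,0}(\rdd)$. Recall that, at least formally, $\Op^\w(a)\Op^\w(b)=\Op^\w(a\# b)$ with
\begin{equation}
(a\# b)(X)=2^{2n}\iint_{\rdd\times\rdd}e^{-4\pi i\sigma(X-Y,X-Z)}\,a(Y)\,b(Z)\,dY\,dZ,
\end{equation}
$\sigma$ the standard symplectic form on $\rdd$ and the integral read in the oscillatory sense (see \cite[Chapter 18]{hormander3} for the precise normalization). The first step is to observe that both $\Op^\w(a)$ and $\Op^\w(b)$ are bounded on $L^2(\rd)$ by the Calder\'on--Vaillancourt theorem, so the composition makes sense as a continuous operator; the real content of the statement is then the \emph{regularity} $c\in S^0_{0,0}(\rdd)$. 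Note that the usual symbolic calculus, which would give $c\sim\sum_k (4\pi i)^{-k}k!^{-1}\{a,b\}_k$, does \emph{not} apply, since $S^0_{0,0}$ is the critical case $\rho=\delta=0$ and the asymptotic series need not make sense: a different argument is required.

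I would then transfer the regularity question to time-frequency analysis. By \cite[Lemma 2.3]{BastianoniDecay} one has $S^0_{0,0}(\rdd)=\bigcap_{s\geq0}\mifs$, where $\mif$ is Sj\"ostrand's class \cite{MR1266757} and $v_s$ the polynomial weight of order $s$; hence it suffices to prove that the twisted product is bounded from $\mifs\times\mifs$ into $\mifs$ for every $s\geq0$ and then intersect over $s$. This is the algebra property of the weighted Sj\"ostrand classes, which I would deduce from Gr\"ochenig's almost-diagonalization of Weyl operators: for $a\in\mifs$ and a fixed Gaussian window $\varphi$, the Gabor matrix $K_a(X,Y)=\la\Op^\w(a)\pi(Y)\varphi,\pi(X)\varphi\ra$ satisfies $|K_a(X,Y)|\le H_a(X-Y)$ with $H_a\in L^1_{v_s}(\rdd)$ controlled by the $\mifs$-norm of $a$, and, conversely, a Weyl operator whose Gabor matrix is dominated by such an $L^1_{v_s}$-convolution kernel has symbol in $\mifs$. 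Since the Gabor matrix of the composition is the continuous matrix product $K_{a\# b}(X,Y)=\int_{\rdd}K_a(X,U)K_b(U,Y)\,dU$, one obtains $|K_{a\#b}(X,Y)|\le (H_a\ast H_b)(X-Y)$, and $H_a\ast H_b\in L^1_{v_s}(\rdd)$ because $v_s$ is submultiplicative for $s\ge0$. Thus $c=a\#b\in\mifs$ for all $s\ge0$, i.e.\ $c\in S^0_{0,0}(\rdd)$.

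The main obstacle is exactly this algebra property: one must control, uniformly in the order $s$, the chirp $e^{-4\pi i\sigma(\cdot,\cdot)}$ in the twisted product — equivalently, the noncommutativity of time-frequency shifts — so that passing through the Gabor matrices does not destroy the $v_s$-weighted integrability, and one must check the converse half of almost-diagonalization in the weighted setting. A self-contained alternative that bypasses modulation spaces is to estimate $\partial_X^\alpha c$ directly from the oscillatory integral: inserting a cut-off $\chi(\ve Y,\ve Z)$ and integrating by parts with the differential operators that leave $e^{-4\pi i\sigma(X-Y,X-Z)}$ invariant yields bounds uniform in $\ve$ and in $X\in\rdd$ — crucially, every $X$-derivative moved onto $a$ or $b$ keeps them in $S^0_{0,0}(\rdd)$, so no growth is produced — and then one lets $\ve\to0$. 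I would use the time-frequency argument as the primary route and keep the oscillatory-integral estimates as a complementary remark.
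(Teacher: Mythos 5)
The paper does not prove this theorem itself: it states it and defers to H\"ormander, \cite[Chapter~18]{hormander3}, where the closure of $S^0_{0,0}$ under the Weyl product is obtained through oscillatory-integral estimates in the general Weyl calculus for H\"ormander metrics. Your primary route is a genuinely different one: you characterize $S^0_{0,0}(\rdd)=\bigcap_{s\ge 0} M^{\infty,1}_{1\otimes v_s}$ (exactly as the paper itself points out after the Introduction, citing \cite[Lemma 2.3]{BastianoniDecay}), pass to Gabor matrices via Gr\"ochenig's almost-diagonalization of Weyl operators, and use that convolution of $L^1_{v_s}$-dominated off-diagonal envelopes stays in $L^1_{v_s}$ by submultiplicativity of $v_s$. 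This is correct and arguably closer in spirit to the paper's own framework than the cited H\"ormander-style argument; what it buys is a proof that manifestly extends to the weighted Sj\"ostrand classes the authors mention. What it costs is reliance on the \emph{converse} half of almost-diagonalization — recovering $c\in M^{\infty,1}_{1\otimes v_s}$ from an $L^1_{v_s}$-dominated Gabor matrix — which you correctly flag as the delicate step; it is a nontrivial theorem but a known one, and since $\Op^\w(a)\Op^\w(b)$ is a priori $L^2$-bounded (Calder\'on--Vaillancourt) and $\Op^\w:\cS'(\rdd)\to\mathcal{L}(\cS,\cS')$ is a bijection, the symbol $c$ exists as a tempered distribution to which the characterization applies. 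Your complementary oscillatory-integral sketch (cutoff, invariant integration by parts, uniform bounds on $\partial^\alpha_X c$) is essentially the H\"ormander argument the paper defers to. You also rightly note that the naive symbolic calculus with an asymptotic expansion is unavailable at $\rho=\delta=0$, which is the real reason a separate argument is needed here.
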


\subsection{Hamilton map} 
\cite{hormandersymplectic} Let $a$ be a complex quadratic form on the phase space
\[
a:\; \mathbb{R}^{2n} \longrightarrow \mathbb{C}, \quad
X \longmapsto a(X)=\langle X,QX \rangle_{\mathbb{C}^{2n}},
\]
with
$Q \in \bC^{2n\times2n}$ symmetric matrix such that $\mathrm{Re} \, Q \geq 0$. \textit{The Hamilton map} associated with $a=a(X)$ is the (complex) matrix $F$ defined by the equation
\begin{equation}\label{Hamiltonmap}
\sigma(X,FX)=a(X),
\end{equation}
where $\sigma$ is the symplectic form of $\mathbb{R}^{2n}$ extended to $\mathbb{C}^{2n}$ defined by 
\[
\sigma((x,\xi),(y,\eta))=\langle y,\xi \rangle_{\mathbb{C}^n}- \langle x,\eta \rangle_{\mathbb{C}^n}, \quad (x,\xi),(y,\eta)\in \mathbb{C}^{2n},
\]
(where recall $\langle x,y \rangle_{\mathbb{C}^n}=\sum_{j=1}^n x_jy_j$). It is useful to observe that the matrix $F$ can be expressed also as $F=JQ$, where $J$ is given in \eqref{defJ}. Therefore, if $a$ is a \textit{real-valued} quadratic form, one has (with $H_a$ the Hamilton vector field associated with $a$)
\begin{equation}
\label{lin.HVF}
H_{a}(x,\xi)=
\begin{pmatrix}
\partial_{\xi_1} a(x,\xi)\\
\vdots \\
\partial_{\xi_n} a(x,\xi) \\
-\partial_{x_1} a(x,\xi)\\
\vdots \\
-\partial_{x_n} a(x,\xi)
\end{pmatrix}
=2F
\begin{pmatrix}
x \\
\xi
\end{pmatrix}, \quad (x,\xi) \in \mathbb{R}^{2n}.
\end{equation}
and in this case, denoting by $\Phi(t;x,\xi)$ \textit{the bicharacteristic flow}
(that is the flow associated with the Hamilton vector field $H_a$), one has 
\[
\Phi(t;x,\xi)=e^{2tF}\begin{pmatrix}
x \\
\xi
\end{pmatrix}, \quad (x,\xi)\in \mathbb{R}^{2n}, \quad t \in I(x,\xi),
\]
where $I=I(x,\xi) \subseteq \mathbb{R}$ is the interval of definition of the flow starting from the point $(x,\xi) \in \mathbb{R}^{2n}$. 

\section{$\cA$-Wigner kernels}\label{sec:WKers}
The microlocal analysis of pseudodifferential operators with metaplectic Wigner distributions requires the theory of $\cA$-Wigner kernels, extending the already existing Wigner analysis considered in \cite{cordero2024understanding,cordero2023wigner}, limited to the Wigner distribution. Let us briefly synthesize the main results in this particular case. 

For every linear and continuous operator $T:\cS(\rd)\to\cS'(\rd)$ there exists a linear continuous
operator $K:\cS(\rdd)\to\cS'(\rdd)$ so that
\begin{equation}\label{defK}
	W(Tf,Tg)=K(W(f,g)), \qquad f,g\in\cS(\rd).
\end{equation}
The kernel of $K$ is called the {\em Wigner kernel} of $T$, and $K$ acts as an intertwining
operator with the cross-Wigner distribution. In terms of distributions,
the Wigner kernel of $T$ is the (unique) tempered distribution
$k\in\cS'(\bR^{4n})$ such that
\begin{equation}\label{defkWigner}
	\la W(Tf,Tg),W(u,v)\ra = \la k,W(u,v)\otimes \overline{W(f,g)}\ra, \qquad f,g,u,v\in\cS(\rd).
\end{equation}
The Schwartz kernel of $T$, i.e., the distribution $k_T\in\cS'(\rdd)$ such that
\begin{equation}
	\la Tf,g\ra =\la k_T,g\otimes\bar f\ra, \qquad f,g\in\cS(\rd),
\end{equation}
is related to the Wigner kernel by
\begin{equation}\label{WkkT}
	k(x,\xi,y,\eta)=Wk_T(x,y,\xi,-\eta).
\end{equation}
In this section, we study the possibility of performing Wigner analysis with other time-frequency representations of metaplectic type. Our analysis includes the entire spectrum of $\tau$-Wigner distributions, the STFT and, as we shall see, every metaplectic Wigner distribution.
\subsection{Definition of $\cA$-Wigner kernels and main properties}\label{sec:AWig}

Hereafter, $W_\cA$ is a fixed metaplectic Wigner distribution. Loosely speaking, we are interested in a tempered distribution $k_\cA\in\cS'(\bR^{4n})$ with the following kernel property: for every $T:\cS(\rd)\to\cS'(\rd)$ linear and continuous,
\begin{equation}\label{kAintegral}
	W_\cA(Tf,Tg)(X)=\int_{\rdd}k_\cA(X,Y)W_\cA(f,g)(Y)dY, \qquad f,g\in \cS(\rd),
\end{equation}
mimicking the definition of Wigner kernels, with $W_\cA$ replacing the cross-Wigner distribution. We will make use of the following lemma.

\begin{lemma}\label{lemmaDens}
	The $\mbox{span}(\{W_\cA(f,g):f,g\in\cS(\rd)\})$ is dense in $\cS(\rdd)$, whereas $\mbox{span}\{W_\cA(u,v)\otimes \overline{W_\cA(f,g)}:f,g,u,v\in\cS(\rd)\}$ is dense in $\cS(\bR^{4n})$.
\end{lemma}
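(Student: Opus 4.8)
\textbf{Proof proposal for Lemma \ref{lemmaDens}.}

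The plan is to reduce both density statements to the corresponding, well-known facts for the \emph{classical} (cross-)Wigner distribution, using that $\hat\cA$ is a topological isomorphism of Schwartz spaces. First I would recall the baseline fact: the span of $\{W(f,g):f,g\in\cS(\rd)\}$ is dense in $\cS(\rdd)$. This is standard and can be obtained, for instance, from the fact that finite linear combinations of products $f\otimes\bar g$ with $f,g\in\cS(\rd)$ are dense in $\cS(\rdd)$ (tensor products of Schwartz functions span a dense subspace, by a classical approximation argument with Hermite functions), together with the identity $W(f,g)=\cF_2\mathfrak{T}_{E_{1/2}}(f\otimes\bar g)$ exhibiting $W$ as (the restriction to split-rank tensors of) a fixed metaplectic operator $\widehat{\cA_{1/2}}$, which is a topological isomorphism of $\cS(\rdd)$ by Proposition \ref{propContMetap}. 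Hence $\mathrm{span}\{W(f,g)\}=\widehat{\cA_{1/2}}(\mathrm{span}\{f\otimes\bar g\})$ is the image of a dense set under an isomorphism, so it is dense.

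Next, for a general metaplectic Wigner distribution $W_\cA=\hat\cA(f\otimes\bar g)$, I would write $W_\cA(f,g)=\hat\cA\,\widehat{\cA_{1/2}}^{-1}\,W(f,g)=\widehat{\cB}\,W(f,g)$, where $\widehat{\cB}=\hat\cA\,\widehat{\cA_{1/2}}^{-1}\in\Mp(2n,\bR)$. Then
\begin{equation}
\mathrm{span}\{W_\cA(f,g):f,g\in\cS(\rd)\}=\widehat{\cB}\big(\mathrm{span}\{W(f,g):f,g\in\cS(\rd)\}\big),
\end{equation}
which is the image under the topological isomorphism $\widehat{\cB}$ of $\cS(\rdd)$ of a dense subspace, hence dense in $\cS(\rdd)$. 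This proves the first assertion. For the second assertion, one uses the analogous baseline fact that $\mathrm{span}\{F\otimes\bar G:F,G\in\cS(\rdd)\}$ is dense in $\cS(\bR^{4n})$ (again tensor products of Schwartz functions span a dense subspace). Applying the first part twice, $\mathrm{span}\{W_\cA(u,v):u,v\in\cS(\rd)\}$ is dense in $\cS(\rdd)$, and the complex conjugation map $G\mapsto\bar G$ is a (conjugate-linear) homeomorphism of $\cS(\rdd)$, so $\mathrm{span}\{\overline{W_\cA(f,g)}:f,g\in\cS(\rd)\}$ is dense in $\cS(\rdd)$ too. It remains to pass from the density of $\mathrm{span}\,D_1$ in $\cS(\rdd)$ and $\mathrm{span}\,D_2$ in $\cS(\rdd)$ to the density of $\mathrm{span}\{A\otimes B:A\in D_1,\,B\in D_2\}$ in $\cS(\bR^{4n})$; this follows by a routine two-step approximation (first approximate $F\otimes\bar G$ in $\cS(\bR^{4n})$ by an element of $\mathrm{span}\{A\otimes\bar G\}$ using density in the first variable with $\bar G$ fixed, then approximate in the second variable), using the continuity of the bilinear map $(A,B)\mapsto A\otimes B$ from $\cS(\rdd)\times\cS(\rdd)$ to $\cS(\bR^{4n})$.

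The only genuinely nontrivial input is the density of algebraic tensor products of Schwartz functions in the Schwartz space of the product domain; I expect this to be the main point to cite or justify carefully, and it is classical (e.g.\ via the nuclearity of $\cS$, or by a direct Hermite expansion argument: the Hermite functions $\{h_\alpha\otimes h_\beta\}$ form an orthonormal basis whose finite linear combinations are dense in $\cS$). Everything else is bookkeeping: metaplectic operators are isomorphisms of $\cS$, conjugation is a homeomorphism, and images of dense sets under homeomorphisms (and continuous bilinear maps with dense-range factors) remain dense.
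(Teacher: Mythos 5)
Your proof is correct, but for the second assertion it takes a different route from the paper, and the comparison is instructive. For the first assertion you factor through the classical Wigner distribution ($W_\cA = \widehat{\cB}\,W$ with $\widehat{\cB}=\hat\cA\,\widehat{\cA_{1/2}}^{-1}$) and then use $W=\widehat{\cA_{1/2}}(\,\cdot\otimes\bar\cdot\,)$; this is essentially the paper's one-line argument (density of $\mathrm{span}\{f\otimes\bar g\}$ plus the fact that $\hat\cA$ is an isomorphism of $\cS(\rdd)$), just with one extra conjugation step. For the second assertion you genuinely diverge: you first establish density of $\mathrm{span}\{W_\cA(u,v)\}$ and of $\mathrm{span}\{\overline{W_\cA(f,g)}\}$ separately in $\cS(\rdd)$, and then invoke a two-step approximation (relying on joint continuity of $(F,G)\mapsto F\otimes G$ on $\cS(\rdd)\times\cS(\rdd)$, a standard but nontrivial fact) to conclude density of $\mathrm{span}\{A\otimes B\}$ in $\cS(\bR^{4n})$. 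The paper instead observes, via Propositions \ref{propGC1} and \ref{propGC2}, that the whole expression is the image of a simple tensor under a single metaplectic operator on $\bR^{4n}$:
\begin{equation}
W_\cA(u,v)\otimes\overline{W_\cA(f,g)}=\hat\cA(u\otimes\bar v)\otimes\hat{\bar\cA}(\bar f\otimes g)=(\hat\cA\otimes\hat{\bar\cA})(u\otimes\bar v\otimes\bar f\otimes g),
\end{equation}
so the claim reduces in one step to density of $\mathrm{span}\{u\otimes\bar v\otimes\bar f\otimes g\}$ in $\cS(\bR^{4n})$ and the isomorphism property of $\hat\cA\otimes\hat{\bar\cA}$. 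This avoids the tensor-continuity lemma entirely and, more importantly, exhibits the operator $\hat\cA\otimes\hat{\bar\cA}$ that is reused immediately afterwards in Theorem \ref{thmkA} (formula \eqref{kAkT}). Your version is valid as a standalone proof, but the paper's route is both shorter and better aligned with what follows.
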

\begin{proof}
	We use Propositions \ref{propGC1} and \ref{propGC2}. Recall that $\mbox{span}(\{f\otimes\bar g:f,g\in\cS(\rd)\})$ is dense in $\cS(\rdd)$. Since $\hat\cA$ is an isomorphism of $\cS(\rdd)$, we have that $\mbox{span}(\{W_\cA(f,g):f,g\in\cS(\rd)\})$ is dense in $\cS(\rdd)$. The second assertion follows analogously, by writing
	\begin{equation}
		W_\cA(u,v)\otimes \overline{W_\cA(f,g)}=\hat\cA(u\otimes\bar v)\otimes\overline{\hat\cA(f\otimes\bar g)}=\hat\cA(u\otimes\bar v)\otimes\hat{\bar{\cA}}(\bar f\otimes g)=\hat\cB(u\otimes\bar v\otimes \bar f\otimes g),
	\end{equation}
	with $\hat\cB=\hat\cA\otimes\hat{\bar\cA}$, and by using the density of $\mbox{span}(\{u\otimes\bar v\otimes \bar f\otimes g:f,g,u,v\in\cS(\rd)\})$ in $\cS(\bR^{4n})$.
	
\end{proof} 

\begin{theorem}\label{thmkA}
	Let $T:\cS(\rd)\to \cS'(\rd)$ be a linear and continuous operator. There exists a unique tempered distribution $k_\cA\in\cS'(\bR^{4n})$ such that
	\begin{equation}\label{defkA}
		\la W_\cA(Tf,Tg),W_\cA(u,v) \ra = \la k_\cA,W_\cA(u,v)\otimes\overline{W_\cA(f,g)}\ra, \qquad f,g,u,v\in \cS(\rd).
	\end{equation}
	Specifically, if $k_T$ is the Schwartz kernel of $T$, then
	\begin{equation}\label{kAkT}
		k_\cA=(\hat\cA\otimes \hat{\bar\cA}) \mathfrak{T}_{P_+}(k_T\otimes\overline{k_T}),
	\end{equation}
	where $\mathfrak{T}_{P_+}F(x,\xi,y,\eta):=F(x,y,\xi,\eta)$.
\end{theorem}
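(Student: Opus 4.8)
The plan is to reduce the statement to the already-understood Wigner-kernel case via the structure of metaplectic operators, and then to verify the two assertions (existence--uniqueness, and the explicit formula) essentially simultaneously by a density argument. First I would record the starting point: for $T:\cS(\rd)\to\cS'(\rd)$ linear and continuous with Schwartz kernel $k_T\in\cS'(\rdd)$, the classical Wigner kernel exists and is given by \eqref{WkkT}, i.e.\ $k(x,\xi,y,\eta)=Wk_T(x,y,\xi,-\eta)$, and it satisfies \eqref{defkWigner}. Since $W_\cA(f,g)=\hat\cA(f\otimes\bar g)$, one has
\[
W_\cA(Tf,Tg)=\hat\cA\big((Tf)\otimes\overline{Tg}\big),
\]
and I would rewrite the bilinear form on the left-hand side of \eqref{defkA} by moving $\hat\cA$ and $\hat\cA$ onto the test functions via the duality $\la\hat\cA F,G\ra=\la F,\hat\cA^{-1}G\ra$ (valid on $\cS'\times\cS$ as in Proposition \ref{propContMetap}). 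This turns $\la W_\cA(Tf,Tg),W_\cA(u,v)\ra$ into $\la (Tf)\otimes\overline{Tg},\hat\cA^{-1}(u\otimes\bar v)\ra$, which is visibly of the form handled by the plain Wigner-kernel theory once we recognize $\hat\cA^{-1}(u\otimes\bar v)$ as a generic Schwartz function on $\rdd$.

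Next, for uniqueness I would invoke Lemma \ref{lemmaDens}: the span of $\{W_\cA(u,v)\otimes\overline{W_\cA(f,g)}\}$ is dense in $\cS(\bR^{4n})$, so a tempered distribution $k_\cA$ satisfying \eqref{defkA} is determined by its action on this dense set, hence is unique. For existence I would produce the candidate \eqref{kAkT} directly and check \eqref{defkA}. The computation: starting from $\la k_\cA, W_\cA(u,v)\otimes\overline{W_\cA(f,g)}\ra$ with $k_\cA=(\hat\cA\otimes\hat{\bar\cA})\mathfrak T_{P_+}(k_T\otimes\overline{k_T})$, peel off $\hat\cA\otimes\hat{\bar\cA}$ by duality (using the identity from the proof of Lemma \ref{lemmaDens}, namely $W_\cA(u,v)\otimes\overline{W_\cA(f,g)}=\hat\cB(u\otimes\bar v\otimes\bar f\otimes g)$ with $\hat\cB=\hat\cA\otimes\hat{\bar\cA}$), reducing to $\la \mathfrak T_{P_+}(k_T\otimes\overline{k_T}),\, u\otimes\bar v\otimes\bar f\otimes g\ra$; then undo the coordinate permutation $\mathfrak T_{P_+}$ to get $\la k_T\otimes\overline{k_T},\,(u\otimes\bar f)\otimes\overline{(v\otimes\bar g)}\ra$ up to reordering, which factors as $\la k_T,u\otimes\bar f\ra\,\overline{\la k_T,v\otimes\bar g\ra}=\la Tf,u\ra\,\overline{\la Tg,v\ra}$. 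On the other side, $\la W_\cA(Tf,Tg),W_\cA(u,v)\ra=\la (Tf)\otimes\overline{Tg},\hat\cA^{-1}\hat\cA(u\otimes\bar v)\ra=\la Tf,u\ra\overline{\la Tg,v\ra}$ by Moyal's identity for $W_\cA$ (equation \eqref{Moyal}), and the two agree.

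The main obstacle I anticipate is bookkeeping rather than conceptual: keeping track of the coordinate permutation $\mathfrak T_{P_+}$ and of which complex conjugations land where, since $\hat{\bar\cA}$ is defined by $\hat{\bar\cA}h=\overline{\hat\cA\bar h}$ (Proposition \ref{propGC2}), so the conjugates have to be threaded carefully through the tensor factors — exactly the manipulation already rehearsed in the proof of Lemma \ref{lemmaDens}. A secondary point is the continuity/well-definedness of $\mathfrak T_{P_+}(k_T\otimes\overline{k_T})$ as a tempered distribution on $\bR^{4n}$: the tensor product of two tempered distributions is tempered, complex conjugation and the linear coordinate change $P_+\in\GL(4n,\bR)$ preserve $\cS'(\bR^{4n})$, and $\hat\cA\otimes\hat{\bar\cA}\in\Mp(4n,\bR)$ acts as an isomorphism of $\cS'(\bR^{4n})$, so $k_\cA\in\cS'(\bR^{4n})$ with no further estimates needed. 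Finally, I would remark that the passage from \eqref{defkA} to the integral/intertwining form $W_\cA(Tf,Tg)=K_\cA W_\cA(f,g)$ — i.e.\ the existence of $K_\cA$ with kernel $k_\cA$ — follows by the Schwartz kernel theorem together with the density of $\mathrm{span}\{W_\cA(f,g)\}$ in $\cS(\rdd)$, as announced after Definition \ref{defkAintro}.
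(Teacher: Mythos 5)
Your proof is correct and follows essentially the same route as the paper's: both collapse $\la W_\cA(Tf,Tg),W_\cA(u,v)\ra$ to $\la Tf,u\ra\overline{\la Tg,v\ra}$ via unitarity/Moyal, rewrite this in terms of $k_T\otimes\overline{k_T}$ with the permutation $\mathfrak T_{P_+}$, move $\hat\cA\otimes\hat{\bar\cA}$ by duality, and settle uniqueness via the density in Lemma~\ref{lemmaDens}; you merely run the chain of identities in the opposite direction (from the candidate $k_\cA$ to the left-hand side rather than the reverse). The brief allusion to reducing to the ``plain Wigner-kernel case'' is a harmless framing remark that the actual computation does not rely on.
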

\begin{proof}
	If $f,g,u,v \in \mathcal{S}(\R^n)$, we have
	\begin{align}
		\la W_\cA(Tf,Tg),W_\cA(u,v)\ra & = \la \hat\cA(Tf\otimes \overline{Tg}), \hat\cA(u\otimes\bar v)\ra \\
		&=\la Tf,u \ra\overline{\la Tg,v\ra}\\
		&=\la k_T,u\otimes\bar f\ra\overline{\la k_T, v\otimes\bar g\ra}\\
		&=\la k_T\otimes\overline{k_T},u\otimes\bar f\otimes \bar v\otimes g\ra\\
		&=\la k_T\otimes\overline{k_T}, \mathfrak{T}_{P_+}(u\otimes\bar v\otimes\bar f\otimes g) \ra\\
		&=\la \mathfrak{T}_{P_+}(k_T\otimes\overline{k_T}), u\otimes\bar v\otimes\bar f\otimes g \ra\\
		&=\la (\hat\cA\otimes \hat{\bar\cA}) \mathfrak{T}_{P_+}(k_T\otimes\overline{k_T}), W_{\cA}(u,v)\otimes\overline{W_\cA(f,g)}\ra.
	\end{align}
	Define $k_\cA=(\hat\cA\otimes \hat{\bar\cA}) \mathfrak{T}_{P_+}(k_T\otimes\overline{k_T})$. To prove the uniqueness, let $k_\cA'\in\cS'(\bR^{4n})$ satisfy \eqref{defkA}. Then, for every $f,g,u,v\in \cS(\rd)$,
	\begin{align}
		\la k_\cA-k_\cA',W_\cA(u,v)\otimes\overline{W_\cA(f,g)}\ra=\la W_\cA(Tf,Tg),W_\cA(u,v) \ra-\la W_\cA(Tf,Tg),W_\cA(u,v) \ra=0.
	\end{align}
	Hence, the assertion follows by Lemma \ref{lemmaDens} using a standard density argument.
	
\end{proof}

\begin{definition}\label{defkA-def}
	The tempered distribution $k_\cA\in\cS'(\bR^{4n})$ of Theorem \ref{thmkA} is the {\em $\cA$-Wigner kernel} of $T$.
\end{definition}
In view of Lemma \ref{lemmaDens}, we see that \eqref{kAintegral} is a restatement of \eqref{defkA} whenever the integral is defined. Hence, we can rephrase Theorem \ref{thmkA} as follows: for every linear and continuous operator $T:\cS(\rd)\to\cS'(\rd)$ there exists a unique linear continuous operator $K_\cA:\cS(\rdd)\to\cS'(\rdd)$ such that
\begin{equation}\label{defKA}
	W_\cA(Tf,Tg)=K_\cA(W_\cA(f,g)),\qquad f,g\in\cS(\rd).
\end{equation}
We call $K_\cA$ the {\em $\cA$-Wigner operator}. By Lemma \ref{lemmaDens}, the $\cA$-Wigner kernel $k_\cA$ of $T$ is the Schwartz kernel of $K_\cA$. 
\subsection{Doubling the variables}
When passing from Schwartz to $\cA$-Wigner kernels, a natural doubling of the number of variables occurs. Recall that the structure of the Wigner distribution allows to write the Wigner kernel $k$ in \eqref{WkkT} as the Wigner transform of the Schwartz kernel, up to a (not so) harmless change of variables. However, a similar expression for the $\cA$-Wigner kernel $k_\cA$ places us in front of an unexpected complication: $W_\cA k_T$ is not defined. While in the particular case of Wigner kernels we can write
\begin{equation}\label{WigKerGood}
	\begin{array}{lcl}
	W_\cA = W & \leadsto & k(x,\xi,y,\eta)=Wk_T(x,y,\xi,-\eta),
	\end{array}
\end{equation}
for general $\cA$-Wigner kernels,
\begin{equation}
	\begin{array}{lcl}
	\text{$W_\cA$ general} & \not\leadsto & k_\cA(x,\xi,y,\eta)=W_\cA k_T(x,y,\xi,-\eta),
	\end{array}
\end{equation}
as $\hat\cA$ is not defined on $L^2(\bR^{4n})$. Thus, on our way to generalize \eqref{WkkT}, by improving \eqref{kAkT}, we are led to face an intermediate challenge: for a given metaplectic operator $\hat\cA$ acting on $L^2(\rdd)$, we have to find $\hat\cA'\in\Mp(4n,\bR)$ so that
\begin{equation}\label{WigAKerGood}
	\begin{array}{lcl}
	\text{$W_\cA$ general} & \leadsto & k_\cA(x,\xi,y,\eta)=W_{\cA'} k_T(x,y,\xi,-\eta).
	\end{array}
\end{equation}
Stated differently, this problem is the same as defining an embedding $\cA\in\Sp(2n,\bR)\hookrightarrow\cA'\in\Sp(4n,\bR)$ properly so that the right side of \eqref{WigAKerGood} holds.
\begin{proposition}
	Let $\cA\in\Sp(2n,\bR)$, consider the permutation matrices
	\begin{equation}\label{originaldefPpm}
		P_\pm=\begin{pmatrix}
			I_n & 0_n & 0_n & 0_n\\
			0_n & 0_n & I_n & 0_n\\
			0_n & I_n & 0_n & 0_n\\
			0_n & 0_n & 0_n & \pm I_n\\
		\end{pmatrix},
	\end{equation}
	and define
	\begin{equation}
		\cA'=\cD_{P_-}(\cA\otimes\bar\cA)\cD_{P_+}\in\Sp(4n,\bR),
	\end{equation}
	(see \eqref{defDEVQ} for the expressions of $\cD_{ P_{\pm}}$). Then,
	\begin{equation}\label{defAprimo}
		\cA'=\left(\begin{array}{cc|cc}
		\diag(A_{11},A_{11}) & \diag(A_{12},A_{12}) & \diag(A_{13},-A_{13}) & \diag(A_{14},-A_{14}) \\
			\diag(A_{21},-A_{21}) & \diag(A_{22},-A_{22}) & \diag(A_{23},A_{23}) &\diag(A_{24},A_{24})\\
			\hline
			\diag(A_{31},-A_{31}) & \diag(A_{32},-A_{32}) & \diag(A_{33},A_{33}) & \diag(A_{34},A_{34})\\
				\diag(A_{41},A_{41})& \diag(A_{42},A_{42}) & \diag(A_{43},-A_{43}) & \diag(A_{44},-A_{44})
		\end{array}\right).
	\end{equation}
\end{proposition}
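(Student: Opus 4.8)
The statement is purely computational, so I would split it in two parts. First, the membership $\cA'\in\Sp(4n,\bR)$ is automatic, since a product of symplectic matrices is symplectic: $\cD_{P_\pm}\in\Sp(4n,\bR)$ because $P_\pm\in\GL(4n,\bR)$ and every matrix of the form $\cD_E$ with $E\in\GL$ is symplectic, cf. \eqref{defDEVQ}, while $\cA\otimes\bar\cA\in\Sp(4n,\bR)$ is the projection of the metaplectic operator $\hat\cA\otimes\hat{\bar\cA}$ furnished by Proposition \ref{propGC1} applied to $\hat\cA$ and $\hat{\bar\cA}$, the latter being metaplectic by Proposition \ref{propGC2} (here the role of $n$ in both propositions is played by $2n$). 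Thus the only real content of the statement is the explicit block identity \eqref{defAprimo}, which I would prove by multiplying out the three factors and then regrouping blocks.

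The plan for \eqref{defAprimo} is as follows. I view $\cA$ as a $2\times2$ block matrix with $2n\times2n$ entries $\cA=\begin{pmatrix}A&B\\ C&D\end{pmatrix}$, whose blocks in turn carry the $n\times n$ block decomposition dictated by \eqref{blockA}, namely $A=\begin{pmatrix}A_{11}&A_{12}\\A_{21}&A_{22}\end{pmatrix}$ and analogously $B,C,D$ from columns $3,4$ and rows $3,4$ of \eqref{blockA}. By Proposition \ref{propGC2}, $\bar\cA=\begin{pmatrix}A&-B\\-C&D\end{pmatrix}$; by Proposition \ref{propGC1} and \eqref{tensmates}, grouping the $4n\times4n$ position and momentum halves, $\cA\otimes\bar\cA=\begin{pmatrix}\mathbf A&\mathbf B\\ \mathbf C&\mathbf D\end{pmatrix}$ with $\mathbf A=\diag(A,A)$, $\mathbf B=\diag(B,-B)$, $\mathbf C=\diag(C,-C)$, $\mathbf D=\diag(D,D)$. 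Next, $P_+$ is a symmetric permutation matrix and $P_-$ a symmetric signed permutation matrix, both involutions, so $P_\pm^{-1}=P_\pm^\top=P_\pm$ and $\cD_{P_\pm}=\diag(P_\pm,P_\pm)$; hence $\cA'=\begin{pmatrix}P_-\mathbf A P_+& P_-\mathbf B P_+\\ P_-\mathbf C P_+& P_-\mathbf D P_+\end{pmatrix}$, and everything reduces to the single formula
\[
P_-\begin{pmatrix}X&0\\0&\varepsilon X\end{pmatrix}P_+=\begin{pmatrix}\diag(X_1,\varepsilon X_1)&\diag(X_2,\varepsilon X_2)\\ \diag(X_3,-\varepsilon X_3)&\diag(X_4,-\varepsilon X_4)\end{pmatrix},\qquad \varepsilon\in\{\pm1\},
\]
valid for any $2n\times2n$ matrix $X=\begin{pmatrix}X_1&X_2\\ X_3&X_4\end{pmatrix}$ with $n\times n$ blocks. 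This is read off directly: right multiplication by $P_+$ interchanges the two inner block columns, and left multiplication by $P_-$ interchanges the two inner block rows and negates the last block row. Substituting $X=A,B,C,D$ with $\varepsilon=+1,-1,-1,+1$ respectively and reassembling the four resulting $4n\times4n$ blocks into a single $4\times4$ array of $2n\times2n$ blocks gives exactly \eqref{defAprimo}.

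The only genuine difficulty is bookkeeping, because two inequivalent $4n$-splittings of $\bR^{4n}$ are simultaneously in play: the ``tensor'' splitting inherited from $k_T\otimes\overline{k_T}$, which $P_+$ re-sorts on the input side, and the position/momentum splitting intrinsic to $\Sp(4n,\bR)$, which $P_-$, together with the sign on its last block, re-sorts on the output side, precisely so that the sought identity $k_\cA(x,\xi,y,\eta)=W_{\cA'}k_T(x,y,\xi,-\eta)$ of \eqref{WigAKerGood} holds, in analogy with \eqref{WkkT} and \eqref{kAkT}. Keeping straight the interaction of the sign produced by the last block of $P_-$ with the sign $\varepsilon$ inherited from the conjugation formula for $\bar\cA$ is where care is needed; the rest is mechanical. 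As a sanity check I would test the final expression on the trivial case $\cA=I_{4n}$, where both sides collapse to $\diag(I_n,I_n,I_n,-I_n,I_n,I_n,I_n,-I_n)$, and, if desired, verify directly that it satisfies the symplectic relations \eqref{Sp1}--\eqref{Sp3}.
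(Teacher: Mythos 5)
Your proof is correct and takes essentially the same route as the paper's — a direct block-wise computation of $\cD_{P_-}(\cA\otimes\bar\cA)\cD_{P_+}$, which the paper carries out by introducing auxiliary $2n\times 2n$ matrices $U$, $U'$, $R$ to express $P_\pm$, rather than your cleaner reduction to the single identity for $P_-\diag(X,\varepsilon X)P_+$ applied to $\mathbf A,\mathbf B,\mathbf C,\mathbf D$. You additionally supply the (routine) argument that $\cA'\in\Sp(4n,\bR)$, which the paper leaves implicit; both organizations of the bookkeeping lead to the same block expression \eqref{defAprimo}.
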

\begin{proof}
	It follows by an involved matrix multiplication. For the sake of simplicity, let us indicate
	\begin{equation}
		\cA=\left(\begin{array}{cc|cc}
			A_{11} & A_{12} & A_{13} & A_{14}\\
			A_{21} & A_{22} & A_{23} & A_{24}\\
			\hline
			A_{31} & A_{32} & A_{33} & A_{34}\\
			A_{41} & A_{42} & A_{43} & A_{44}
		\end{array}\right)=\begin{pmatrix}
			A & B\\
			C & D
		\end{pmatrix}, \qquad A,B,C,D\in\bR^{2n\times2n}.
	\end{equation}
	Moreover, if
	\begin{equation}
		U=\begin{pmatrix}
			I_n & 0_n\\
			0_n & 0_n
		\end{pmatrix}, \qquad U'=\begin{pmatrix}
			0_n & 0_n\\
			0_n & I_n
		\end{pmatrix}, \qquad R=\begin{pmatrix}
			0_n & I_n\\
			0_n & 0_n
		\end{pmatrix},
	\end{equation}
	then
	\begin{equation}\label{defPpm}
		P_\pm=\begin{pmatrix}
			U & R^\top\\
			R & \pm U'
		\end{pmatrix}.
	\end{equation}
	Consequently,
	\begin{align}
		&\cD_{P_-}(\cA\otimes\bar\cA)\cD_{P_+}\\
		&=
		\left(\begin{array}{cc|cc}
			UAU+R^\top AR & UAR^\top + R^\top AU' & UBU-R^\top BR & UBR^\top-R^\top BU'\\
			RAU-U'AR & RAR^\top -U'AU'& RBU+U'BR & RBR^\top +U'BU'\\
			\hline
			UCU-R^\top CR & UCR^\top-R^\top CU' & UDU+R^\top DR & UDR^\top+R^\top DU'\\
		RCU+U'CR & RCR^\top+U'CU' & RDU-U'DR & RDR^\top-U'DU'
		\end{array}\right)\\
		&=\left(\begin{array}{cc|cc}
		\diag(A_{11},A_{11}) & \diag(A_{12},A_{12}) & \diag(A_{13},-A_{13}) & \diag(A_{14},-A_{14}) \\
			\diag(A_{21},-A_{21}) & \diag(A_{22},-A_{22}) & \diag(A_{23},A_{23}) &\diag(A_{24},A_{24})\\
			\hline
			\diag(A_{31},-A_{31}) & \diag(A_{32},-A_{32}) & \diag(A_{33},A_{33}) & \diag(A_{34},A_{34})\\
				\diag(A_{41},A_{41})& \diag(A_{42},A_{42}) & \diag(A_{43},-A_{43}) & \diag(A_{44},-A_{44})
		\end{array}\right).
	\end{align}
	
\end{proof}

Observe that, in the case of the Wigner distribution, that is when $\cA = \cA_{1/2}$, see \eqref{Atau} with $\tau=1/2$, the diagonal blocks $A_{ij}$ of $\cA_{1/2}$, $i,j=1,\ldots,4$, that in \eqref{defAprimo} appear together with their opposites vanish, whereas the remaining blocks reduce to scalar multiples of the identity. This structural simplification explains why \eqref{WigKerGood} displays the Wigner distribution at both sides of the arrow. Other examples of metaplectic Wigner distributions whose projections enjoy this matrix structure are the STFT, 
\begin{equation}
	k_{st}(x,\xi,y,\eta)=V_{k_T}k_T(x,y,\xi,-\eta),
\end{equation}
where $k_{st}=k_{\cA_{st}}$, see \eqref{Ast}, and the $\tau$-Wigner distributions, for which
\begin{equation}
	k_{\tau}(x,\xi,y,\eta)=W_\tau k_T(x,y,\xi,-\eta),
\end{equation}
where, if $\cA_\tau$ is the projection of $W_\tau$ in \eqref{Atau}, $k_\tau=k_{\cA_{\tau}}$.

\begin{theorem}\label{thmAWigner}
	Let $W_\cA$ be a metaplectic Wigner distribution and $P_\pm$ be defined as in \eqref{originaldefPpm}. Let $\hat\cA'=\mathfrak{T}_{P_-}(\hat\cA\otimes\hat{\bar\cA})\mathfrak{T}_{P_+}$, whose projection is the matrix in \eqref{defAprimo}. Then, for every linear and continuous $T:\cS(\rd)\to\cS'(\rd)$ with Schwartz kernel $k_T$ and $\cA$-Wigner kernel $k_\cA$, it holds:
	\begin{equation}\label{WigAkA}
		k_\cA(x,\xi,y,\eta)=W_{\cA'}k_T(x,y,\xi,-\eta).
	\end{equation}
\end{theorem}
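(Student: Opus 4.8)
The plan is to combine the explicit formula for the $\cA$-Wigner kernel obtained in Theorem \ref{thmkA}, namely $k_\cA=(\hat\cA\otimes\hat{\bar\cA})\mathfrak{T}_{P_+}(k_T\otimes\overline{k_T})$, with the observation that the classical Wigner kernel already satisfies $k(x,\xi,y,\eta)=Wk_T(x,y,\xi,-\eta)$ from \eqref{WkkT}. First I would make the permutation-of-variables bookkeeping precise: introduce the change of variables $\mathfrak{T}_{P_-}$ on $\cS(\bR^{4n})$ defined by $\mathfrak{T}_{P_-}G(x,\xi,y,\eta)=G(x,y,\xi,-\eta)$ (mirroring the definition $\mathfrak{T}_{P_+}F(x,\xi,y,\eta)=F(x,y,\xi,\eta)$ already in the excerpt), so that the claimed identity \eqref{WigAkA} is exactly $k_\cA=\mathfrak{T}_{P_-}(W_{\cA'}k_T)$. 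Since $W_{\cA'}k_T=\hat\cA'(k_T\otimes\overline{k_T})$ by definition of the metaplectic Wigner distribution, and $\hat\cA'=\mathfrak{T}_{P_-}(\hat\cA\otimes\hat{\bar\cA})\mathfrak{T}_{P_+}$ by hypothesis, the whole statement reduces to checking the operator identity
\begin{equation}
\mathfrak{T}_{P_-}\,\hat\cA' = (\hat\cA\otimes\hat{\bar\cA})\,\mathfrak{T}_{P_+}
\end{equation}
applied to $k_T\otimes\overline{k_T}$, which is immediate once we know $\mathfrak{T}_{P_-}\mathfrak{T}_{P_-}=\mathrm{Id}$ (involutivity of the permutation-with-sign), because then $\mathfrak{T}_{P_-}\hat\cA'=\mathfrak{T}_{P_-}\mathfrak{T}_{P_-}(\hat\cA\otimes\hat{\bar\cA})\mathfrak{T}_{P_+}=(\hat\cA\otimes\hat{\bar\cA})\mathfrak{T}_{P_+}$.

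The key steps in order: (1) verify that $\mathfrak{T}_{P_-}$ is a metaplectic operator on $L^2(\bR^{4n})$ with projection $\cD_{P_-}$ in the sense of \eqref{TEMp} and that it is an involution (this uses $P_-^2=I_{4n}$, a direct check on the block form \eqref{defPpm}, so $\det P_-$ considerations fix the phase and $\mathfrak{T}_{P_-}^2=\pm\mathrm{Id}$; one picks the branch making it $\mathrm{Id}$, or more cleanly one just works with the pullback map $G\mapsto G\circ P_-$ which is manifestly involutive and differs from $\mathfrak{T}_{P_-}$ only by the unimodular constant $i^m$); (2) recall from Theorem \ref{thmkA} the closed form $k_\cA=(\hat\cA\otimes\hat{\bar\cA})\mathfrak{T}_{P_+}(k_T\otimes\overline{k_T})$; (3) apply $\mathfrak{T}_{P_-}$ to both sides of $\hat\cA'(k_T\otimes\overline{k_T})=W_{\cA'}k_T$, substitute $\hat\cA'=\mathfrak{T}_{P_-}(\hat\cA\otimes\hat{\bar\cA})\mathfrak{T}_{P_+}$, cancel $\mathfrak{T}_{P_-}^2$, and identify the right-hand side with $k_\cA$; (4) translate the operator identity back into the pointwise statement \eqref{WigAkA} by unwinding the definitions of $\mathfrak{T}_{P_\pm}$ on functions of $(x,\xi,y,\eta)$, paying attention to the sign flip in the last slot which is exactly the source of the $-\eta$ in \eqref{WkkT} and \eqref{WigAkA}.

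I expect the main obstacle to be purely notational rather than conceptual: keeping straight the three permutation-type operators ($\mathfrak{T}_{P_+}$, $\mathfrak{T}_{P_-}$, and the plain variable-reordering used implicitly in \eqref{WkkT}) and their phase factors, and making sure the sign flip in the $\eta$-variable is handled consistently — in particular that $\mathfrak{T}_{P_-}$ as defined by the pullback through $\cD_{P_-}$ really does send $(x,\xi,y,\eta)\mapsto(x,y,\xi,-\eta)$ and is its own inverse. A secondary technical point is that all identities are a priori between tempered distributions on $\bR^{4n}$, so strictly speaking step (3) should be read as an equality in $\cS'(\bR^{4n})$ and justified by testing against $\cS(\bR^{4n})$, using that $\hat\cA\otimes\hat{\bar\cA}$ and the $\mathfrak{T}_{P_\pm}$ are topological isomorphisms of $\cS(\bR^{4n})$ (Proposition \ref{propContMetap}); but since everything in sight is a continuous linear operator on $\cS'$, the algebra carries over verbatim and no new estimate is needed. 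Once the involutivity $\mathfrak{T}_{P_-}^2=\mathrm{Id}$ (up to the irrelevant phase we have agreed to suppress) is in place, the proof is a two-line cancellation, so the write-up should be short.
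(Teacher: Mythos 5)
Your proposal is correct and follows essentially the same route as the paper: you take the closed form $k_\cA=(\hat\cA\otimes\hat{\bar\cA})\mathfrak{T}_{P_+}(k_T\otimes\overline{k_T})$ from Theorem \ref{thmkA}, insert the involutive $\mathfrak{T}_{P_-}$, and read off the variable permutation/sign flip to obtain \eqref{WigAkA} — exactly the two-line cancellation the paper performs. The only extra content in your write-up is the careful side-remark about the phase factor of $\mathfrak{T}_{P_-}$, which the paper suppresses by convention (and which is in fact trivially $1$ here, since $\det P_-=1$), so your argument matches the paper's in substance.
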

\begin{proof}
	By \eqref{kAkT}, and using that $\mathfrak{T}_{P_-}^2=\id_{\cS'}$, we have:
	\begin{align}
		k_\cA(x,\xi,y,\eta)&=(\hat\cA\otimes\hat{\bar\cA})\mathfrak{T}_{P_+}(k_T\otimes\overline{k_T})(x,\xi,y,\eta)=\mathfrak{T}_{P_-}(\hat\cA\otimes\hat{\bar\cA})\mathfrak{T}_{P_+}(k_T\otimes\overline{k_T})(x,y,\xi,-\eta)\\
		&=W_{\cA'}k_T(x,y,\xi,-\eta).
	\end{align}
	This concludes the proof.
	
\end{proof}

\begin{remark}
	Writing the explicit expression of $W_{\cA'}$ can be quite cumbersome. However, we can always write
	\begin{align}
		W_{\cA'}k_T(x,\xi,y,\eta)=(\mathfrak{T}_{P_+}(\hat\cA\otimes\hat{\bar\cA})\mathfrak{T}_{P_+})(k_T\otimes\overline{k_T})(x,\xi,y,-\eta).
	\end{align}
	The metaplectic operator $\mathfrak{T}_{P_+}(\hat\cA\otimes\hat{\bar\cA})\mathfrak{T}_{P_+}$ acts as a sort of {\em twisted} tensor product, by applying $\hat \cA$ to the first and the third variable, and $\hat{\bar\cA}$ to the second and fourth variable. Hence, up to the sign change due to $\mathfrak{T}_{P_-}$, $W_{\cA'}$ is loosely speaking a tensor product where the variables are shuffled. The rigorous treatment of such variations of metaplectic tensor products requires a generalization of Proposition \ref{propGC1} that is beyond the scope of the present work, and is deferred to future investigation.
\end{remark}

To provide more explicit examples, let us express $W_{\cA'}$ in \eqref{WigAkA} for covariant metaplectic Wigner distributions. Let us recall that $W_\cA$ is covariant if
\begin{equation}
	W_\cA(f,g)=\cF^{-1}\Phi_{-B_\cA}\ast W(f,g),
\end{equation}
where the projection of $W_\cA$ is in the form
\begin{equation}\label{Acov}
	\cA=\begin{pmatrix}
	A_{11} & I_n-A_{11} & A_{13} & A_{13}\\
	A_{21} & -A_{21} & I-A_{11}^\top & -A_{11}^\top\\
	0_d & 0_d & I_n &I_n\\
	-I_n & I_n & 0_n & 0_n
	\end{pmatrix}, \qquad A_{11}\in\bR^{d\times d},\, A_{13},\, A_{21}\in\Sym(n,\bR),
\end{equation}
and $B_\cA$ is as in \eqref{defBA}.

\begin{corollary}\label{corCovAprimo}
	Under the notation above, if $W_\cA$ is covariant, then $W_{\cA'}$ is covariant, with
	\begin{equation}
		W_{\cA'}(f,g)=\cF^{-1}\Phi_{-B_{\cA'}}\ast W(f,g),
	\end{equation}
	with
	\begin{equation}\label{defBAprimo}
		B_{\cA'}=\begin{pmatrix}
		\diag(A_{13},-A_{13}) & \frac{1}{2} I_{2n}-\diag(A_{11},A_{11})\\
		\frac 1 2 I_{2n}-\diag(A_{11},A_{11})^\top & -\diag(A_{21},-A_{21})
	\end{pmatrix}.
	\end{equation}
	In particular, if $T:\cS(\rd)\to\cS'(\rd)$ is a continuous linear operator with Schwartz kernel $k_T$ and $\cA$-Wigner kernel $k_\cA$, then
	\begin{equation}
		k_{\cA}(x,\xi,y,\eta)=(\cF^{-1}\Phi_{-B_{\cA'}}\ast Wk_T)(x,y,\xi,-\eta)
	\end{equation}
	up to a phase factor.
\end{corollary}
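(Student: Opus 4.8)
The plan is to apply Theorem~\ref{thmAWigner}, which gives $k_\cA(x,\xi,y,\eta)=W_{\cA'}k_T(x,y,\xi,-\eta)$ with $\hat\cA'=\mathfrak{T}_{P_-}(\hat\cA\otimes\hat{\bar\cA})\mathfrak{T}_{P_+}$, and then to recognize $W_{\cA'}$ as a covariant metaplectic Wigner distribution on $\R^{4n}$ by checking the structural form \eqref{Acov} (in dimension $2n$ rather than $n$) for the matrix $\cA'$ given explicitly in \eqref{defAprimo}. Once covariance of $W_{\cA'}$ is established, Theorem~\ref{thmCovSI} applied on $\R^{2n}$ yields $W_{\cA'}(f,g)=\cF^{-1}\Phi_{-B_{\cA'}}\ast W(f,g)$ with $B_{\cA'}$ read off from formula \eqref{defBA}; the final displayed identity for $k_\cA$ then follows by substituting this expression into \eqref{WigAkA} and using Theorem~\ref{thmAWigner} once more.

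First I would write out $\cA'$ from \eqref{defAprimo} under the covariance hypothesis, i.e. substituting the block pattern \eqref{Acov}: $A_{12}=I_n-A_{11}$, $A_{14}=A_{13}$, $A_{22}=-A_{21}$, $A_{23}=I_n-A_{11}^\top$, $A_{24}=-A_{11}^\top$, $A_{31}=A_{32}=0$, $A_{33}=A_{34}=I_n$, $A_{41}=-I_n$, $A_{42}=I_n$, $A_{43}=A_{44}=0$. Plugging these into the eight relevant $\diag(\cdot,\cdot)$ entries, the third block-row of $\cA'$ becomes $(0_{2n},0_{2n},I_{2n},I_{2n})$ and the fourth block-row becomes $(-I_{2n},I_{2n},0_{2n},0_{2n})$, since $\diag(I_n,I_n)=I_{2n}$ and $\diag(-I_n,-I_n)=-I_{2n}$. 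Setting $A_{11}':=\diag(A_{11},A_{11})\in\bR^{2n\times2n}$, $A_{13}':=\diag(A_{13},-A_{13})$ and $A_{21}':=\diag(A_{21},-A_{21})$, one checks that $A_{13}'$ and $A_{21}'$ are symmetric (as block-diagonal with symmetric blocks) and that the first two block-rows of $\cA'$ are exactly $(A_{11}',\ I_{2n}-A_{11}',\ A_{13}',\ A_{13}')$ and $(A_{21}',\ -A_{21}',\ I_{2n}-(A_{11}')^\top,\ -(A_{11}')^\top)$, using $\diag(A_{12},A_{12})=\diag(I_n-A_{11},I_n-A_{11})=I_{2n}-A_{11}'$ and similarly for the $(2,3)$ and $(2,4)$ entries. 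Hence $\cA'$ has precisely the shape \eqref{Acov} with $n$ replaced by $2n$, so $W_{\cA'}$ is covariant.

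Next, by Theorem~\ref{thmCovSI} (in dimension $2n$) we get $W_{\cA'}(f,g)=\cF^{-1}\Phi_{-B_{\cA'}}\ast W(f,g)$, where $B_{\cA'}$ is obtained from \eqref{defBA} with $A_{11},A_{13},A_{21}$ replaced by $A_{11}',A_{13}',A_{21}'$, that is exactly \eqref{defBAprimo}. Substituting into \eqref{WigAkA} gives
\begin{equation}
k_\cA(x,\xi,y,\eta)=W_{\cA'}k_T(x,y,\xi,-\eta)=(\cF^{-1}\Phi_{-B_{\cA'}}\ast Wk_T)(x,y,\xi,-\eta),
\end{equation}
which is the claimed formula. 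The only point requiring care is that the displayed identity holds up to a phase factor, reflecting the usual ambiguity in the choice of metaplectic lift; this is why the statement is phrased ``up to a phase factor'', and no further argument is needed there. The main obstacle is purely bookkeeping: correctly propagating the $2\times2$ block substitutions through \eqref{defAprimo} and verifying that the resulting $4n\times4n$ matrix matches the pattern \eqref{Acov}, in particular that the chirp matrix $B_{\cA'}$ does assemble into the block-diagonal form \eqref{defBAprimo}; once this is checked, the rest is an immediate invocation of the two quoted theorems.
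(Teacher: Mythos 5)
Your proof is correct and follows essentially the same route as the paper: argue at the level of symplectic projections, substitute the covariance block pattern \eqref{Acov} into the formula \eqref{defAprimo} for $\cA'$, observe that the resulting $8n\times 8n$ matrix again has the covariance form \eqref{Acov} in dimension $2n$ (with $A_{11}'=\diag(A_{11},A_{11})$, $A_{13}'=\diag(A_{13},-A_{13})$, $A_{21}'=\diag(A_{21},-A_{21})$), and then invoke Theorem \ref{thmCovSI} in dimension $2n$ to read off $B_{\cA'}$ from \eqref{defBA}, combining with Theorem \ref{thmAWigner} for the kernel identity. Your version simply makes explicit the block-by-block substitutions that the paper's proof performs implicitly before displaying the resulting matrix.
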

\begin{proof}
	It follows by arguing at the level of projections. By writing $\cA'$ when $\cA$ is as in \eqref{Acov} using \eqref{defAprimo}, we obtain
	\begin{equation}
		\cA'=\left(\begin{array}{cc|cc|cc|cc}
			A_{11} & 0_n & I_n-A_{11} & 0_n & A_{13} & 0_n & A_{13} & 0_n\\
			0 & A_{11} & 0_n & I_n-A_{11} & 0_n & -A_{13} & 0_n & -A_{13}\\
			\hline
			A_{21} & 0_n & -A_{21} & 0_n & I_n-A_{11}^\top & 0_n & -A_{11}^\top & 0_n\\
			0_n & -A_{21} & 0_n & A_{21} & 0_n & I_n-A_{11}^\top & 0_n & -A_{11}^\top \\
			\hline
			0_n & 0_n & 0_n & 0_n & I_n & 0_n & I_n & 0_n\\
			0_n & 0_n & 0_n & 0_n & 0_n & I_n & 0_n & I_n \\
			\hline
			-I_n & 0_n & I_n & 0_n & 0_n & 0_n & 0_n & 0_n\\
			0_n & -I_n & 0_n & I_n & 0_n & 0_n & 0_n & 0_n
		\end{array}\right),
	\end{equation}
	which is in the form \eqref{Acov} with $B_{\cA'}$ as in \eqref{defBAprimo}.
\end{proof}

Along the lines of Corollary \ref{corCovAprimo}, we can argue for matrix Wigner distributions, defined in Example \ref{exampleMWDs}. Recall that for $M\in\GL(2n,\bR)$,
\begin{equation}
	W^{(M)}(f,g)(x,\xi)=W_{\cA_M}(f,g)(x,\xi)=\int_{\rd} f(M_{11}x+M_{12}y)\overline{g(M_{21} x + M_{22}y)}e^{-2\pi i\xi y}dy,
\end{equation}
and $\cA_M$ is defined as in \eqref{defMtWd}.

\begin{corollary}
	Under the notation above, if $M$ has blocks as in \eqref{blockM}, then $W_{\cA_M'}$ is the matrix Wigner distribution
	\begin{equation}
		W_{\cA_M'}(f,g)=W^{(M')}(f,g),
	\end{equation}
	up to a phase factor, where
	\begin{equation}\label{defMprimo}
		M'=\begin{pmatrix}
			\diag(M_{11},M_{11}) & \diag(M_{12},M_{12})\\
			\diag(M_{21},M_{21}) & \diag(M_{22},M_{22})
		\end{pmatrix}=M\otimes M.
	\end{equation}
	Consequently, if $T:\cS(\rd)\to\cS'(\rd)$ is a continuous linear operator with Schwartz kernel $k_T$, the corresponding $\cA_M$-Wigner kernel is, up to a phase factor,
	\begin{equation}
		k_{\cA_M'}(x,\xi,y,\eta)=W^{(M')}k_T(x,y,\xi,-\eta).
	\end{equation}
\end{corollary}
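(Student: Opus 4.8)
The plan is to work entirely at the level of the symplectic projections, following the template of the proof of Corollary~\ref{corCovAprimo}, and then to read off the conclusion about kernels from Theorem~\ref{thmAWigner}. First I would identify the $n\times n$ blocks of $\cA_M$ in the notation of \eqref{blockA} directly from \eqref{defMtWd}: one has $A_{11}=M_{11}'$, $A_{12}=M_{12}'$, $A_{23}=M_{12}^\top$, $A_{24}=M_{22}^\top$, $A_{33}=M_{11}^\top$, $A_{34}=M_{21}^\top$, $A_{41}=-M_{21}'$, $A_{42}=-M_{22}'$, while the remaining eight blocks, namely $A_{13},A_{14},A_{21},A_{22},A_{31},A_{32},A_{43},A_{44}$, all vanish.

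Next I would substitute these blocks into the general formula \eqref{defAprimo} for $\cA'=\cA_M'$. Precisely the eight vanishing blocks are the ones that in \eqref{defAprimo} enter with a sign change, so every pattern $\diag(\,\cdot\,,-\,\cdot\,)$ either becomes $0_{2n}$ or survives as an honest block-diagonal matrix, and the expression collapses to
\begin{equation*}
\cA_M'=\left(\begin{array}{cc|cc}
\diag(M_{11}',M_{11}') & \diag(M_{12}',M_{12}') & 0_{2n} & 0_{2n}\\
0_{2n} & 0_{2n} & \diag(M_{12}^\top,M_{12}^\top) & \diag(M_{22}^\top,M_{22}^\top)\\
\hline
0_{2n} & 0_{2n} & \diag(M_{11}^\top,M_{11}^\top) & \diag(M_{21}^\top,M_{21}^\top)\\
\diag(-M_{21}',-M_{21}') & \diag(-M_{22}',-M_{22}') & 0_{2n} & 0_{2n}
\end{array}\right).
\end{equation*}
The only genuine point to check is that $M'=M\otimes M$ as in \eqref{defMprimo} is invertible with $(M')^{-1}=M^{-1}\otimes M^{-1}$, i.e.\ that the $2n\times 2n$ blocks of $(M')^{-1}$ are $\diag(M_{11}',M_{11}')$, $\diag(M_{12}',M_{12}')$, $\diag(M_{21}',M_{21}')$, $\diag(M_{22}',M_{22}')$; this follows from a direct block multiplication using the identities obtained from $MM^{-1}=I_{2n}$. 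Since transposing a block-diagonal matrix transposes each block, comparing the display above with \eqref{defMtWd} applied to $M'$ shows $\cA_M'=\cA_{M'}$, hence $\hat\cA_M'=\hat\cA_{M'}$ up to a phase factor, and therefore $W_{\cA_M'}=W_{\cA_{M'}}=W^{(M')}$ up to a phase factor.

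Finally, the statement on the $\cA_M$-Wigner kernel is obtained by specializing Theorem~\ref{thmAWigner} to $\cA=\cA_M$: for $T$ with Schwartz kernel $k_T$ one gets $k_{\cA_M'}(x,\xi,y,\eta)=W_{\cA_M'}k_T(x,y,\xi,-\eta)=W^{(M')}k_T(x,y,\xi,-\eta)$, up to the same phase factor. I do not expect any real obstacle here: the argument is pure bookkeeping, the only mild pitfalls being to keep the transposes straight when passing from $M'$ to $(M')^{-1}$ and to remember that the metaplectic identifications hold only modulo a unimodular constant.
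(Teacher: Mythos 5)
Your proof is correct and follows essentially the same route as the paper: both work at the level of symplectic projections, plug the block structure of $\cA_M$ from \eqref{defMtWd} into \eqref{defAprimo}, and recognize the result as $\cA_{M'}$. You spell out two points the paper leaves implicit — that the sign-flipped blocks in \eqref{defAprimo} are exactly the zero blocks of $\cA_M$, and that $(M')^{-1}=M^{-1}\otimes M^{-1}$ so that the top-left corner really is built from the blocks of $(M')^{-1}$ as \eqref{defMtWd} requires — and, incidentally, your displayed matrix has the correct $(3,4)$ entry $\diag(M_{21}^\top,M_{21}^\top)$ where the paper's display appears to carry a typo.
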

\begin{proof}
	By arguing again at the level of symplectic projections, let $M=(M_{ij})_{i,j=1,2}$ and $M^{-1}=(M_{ij}')_{i,j=1,2}$ as in \eqref{blockM}. By \eqref{defAprimo} applied to \eqref{defMtWd}, we get
	\begin{equation}
		\cA_M'=\begin{pmatrix}
			\diag(M_{11}',M_{11}') & \diag(M_{12}',M_{12}') & 0_{2n} & 0_{2n}\\
			0_{2n} & 0_{2n} & \diag(M_{12}^\top,M_{12}^\top) & \diag(M_{22}^\top,M_{22}^\top)\\
			0_{2n} & 0_{2n} & \diag(M_{11}^\top,M_{11}^\top) & \diag(M_{12}^\top,M_{12}^\top)\\
			-\diag(M_{21}',M_{21}') & -\diag(M_{22}',M_{22}') & 0_{2n} & 0_{2n}
		\end{pmatrix},
	\end{equation}
	which is the symplectic matrix in $\Sp(4n,\bR)$ in the form \eqref{defMtWd}, corresponding to the invertible matrix $M'$ defined in \eqref{defMprimo}. This concludes the proof.
	
\end{proof}

\section{$\cA$-Wigner microlocal analysis}\label{sec.Awigmic}

\subsection{The $\cA$-Wigner operator $K_\cA$ and the Weyl quantization}
Metaplectic Wigner distributions enjoy intertwining relations with Weyl operators with symbols in $S^0_{0,0}(\rdd)$, see \cite[Theorem 5.4]{cordero2024wigner}. To be definite, for $a\in\mathcal{S}'(\rdd)$ we consider the symbols
\begin{equation}\label{sigmasigmatilde}\begin{split}
	&\sigma(r,y,\rho,\eta)=a(r,\rho),\\
	&\tilde \sigma(r,y,\rho,\eta)=\bar a(y,-\eta).
	\end{split}
\end{equation}
For the rest of this section, $W_\cA$ is a fixed metaplectic Wigner distribution, and $a\in S^0_{0,0}(\rdd)$ is a fixed symbol.

\begin{proposition}\label{propGCR1}
	Let $\sigma$ and $\tilde{\sigma}$ be the symbols defined as in \eqref{sigmasigmatilde}. Define 
	\begin{align}
		\label{def-b}
		&b(z,w)=(\sigma\circ\cA^{-1})(z,w),\\
		\label{def-tildeb}
		&\tilde b(z,w)=(\tilde \sigma \circ\cA^{-1})(z,w),\\
		\label{def-c}
		&c(z,w)=b(z,w)\tilde b(z,w).
	\end{align}
	Then, $b,\tilde b,c\in S^0_{0,0}(\bR^{4n})$ and, for $f,g\in L^2(\rd)$, the following identities hold in $L^2(\rd)$,
	\begin{align}
		\label{CGR-f1}
		&W_\cA(\Op^\mathrm{w}(a)f,g)=\Op^{\mathrm{w}}(b)W_\cA(f,g),\\
		\label{CGR-f2}
		&W_\cA(f,\Op^\w(a)g)=\Op^\w(\tilde b)W_\cA(f,g),\\
		\label{CGR-f3}
		&W_\cA(\Op^\w(a)f)=\Op^\w(c)W_\cA f.
	\end{align}
\end{proposition}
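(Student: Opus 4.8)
The plan is to establish the three identities \eqref{CGR-f1}, \eqref{CGR-f2}, \eqref{CGR-f3} by reducing them to known facts about Weyl operators, specifically the metaplectic covariance \eqref{intertOpwMetap} and the definition of $W_\cA$ as $\hat\cA$ applied to a tensor product. First I would treat \eqref{CGR-f1}. Since $W_\cA(\Op^\w(a)f,g)=\hat\cA(\Op^\w(a)f\otimes\bar g)$, the key observation is that the operator $f\mapsto \Op^\w(a)f$ acting on the first tensor factor, i.e. $f\otimes\bar g\mapsto \Op^\w(a)f\otimes\bar g$, is itself a Weyl operator on $\cS(\rdd)$: it equals $\Op^\w(\sigma)$ with $\sigma(r,y,\rho,\eta)=a(r,\rho)$ as in \eqref{sigmasigmatilde}. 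This is the standard fact that "Weyl quantization commutes with tensoring"; one checks it from \eqref{defOpwSp} using $W(u_1\otimes u_2, v_1\otimes v_2)=W(u_1,v_1)\otimes W(u_2,v_2)$ (after the appropriate reshuffling of variables), so that $\la \Op^\w(\sigma)(f\otimes\bar g), u\otimes\bar v\ra = \la \sigma, W(u\otimes\bar v, f\otimes\bar g)\ra = \la a, W(u,f)\ra \la \bar g, \bar v\ra = \la \Op^\w(a)f, u\ra\,\overline{\la g,v\ra}$. Then applying $\hat\cA$ and invoking the metaplectic intertwining \eqref{intertOpwMetap} in the form $\hat\cA\,\Op^\w(\sigma)\,\hat\cA^{-1}=\Op^\w(\sigma\circ\cA^{-1})=\Op^\w(b)$ gives
\[
W_\cA(\Op^\w(a)f,g)=\hat\cA\,\Op^\w(\sigma)(f\otimes\bar g)=\Op^\w(b)\,\hat\cA(f\otimes\bar g)=\Op^\w(b)\,W_\cA(f,g).
\]

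For \eqref{CGR-f2} I would argue symmetrically: the operator on the second factor is $g\mapsto \Op^\w(a)g$, but it enters conjugated, $\bar g\mapsto \overline{\Op^\w(a)g}$. Using that $\overline{\Op^\w(a)g}=\Op^\w(\bar a(\cdot,-\cdot))\bar g$ — more precisely that complex conjugation turns $\Op^\w(a)$ into the Weyl operator with the reflected, conjugated symbol, which accounts for the $\bar a(y,-\eta)$ in the definition of $\tilde\sigma$ — one sees that $f\otimes\bar g\mapsto f\otimes\overline{\Op^\w(a)g}$ is $\Op^\w(\tilde\sigma)$ on $\cS(\rdd)$. Conjugating \eqref{intertOpwMetap} again yields \eqref{CGR-f2}. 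Identity \eqref{CGR-f3} then follows by composing: apply \eqref{CGR-f1} with $g$ replaced by $\Op^\w(a)g$ and then \eqref{CGR-f2}, or directly note that $f\otimes\bar f\mapsto \Op^\w(a)f\otimes\overline{\Op^\w(a)g}$ at $g=f$ is $\Op^\w(\sigma)\Op^\w(\tilde\sigma)$; since $\sigma,\tilde\sigma\in S^0_{0,0}(\bR^{4n})$, Theorem \ref{compweyl} gives a product symbol in $S^0_{0,0}(\bR^{4n})$, and conjugating by $\hat\cA$ turns it into $\Op^\w((\sigma\tilde\sigma)\circ\cA^{-1})=\Op^\w(b\tilde b)=\Op^\w(c)$ — here one uses that $(\sigma\tilde\sigma)\circ\cA^{-1}=(\sigma\circ\cA^{-1})(\tilde\sigma\circ\cA^{-1})$ since composition with a fixed map distributes over pointwise products, so $c=b\tilde b$ as claimed.

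It remains to record the symbol-class statement $b,\tilde b,c\in S^0_{0,0}(\bR^{4n})$. For $\sigma$ and $\tilde\sigma$ this is immediate: they are $a$ (resp. $\bar a$) composed with a linear coordinate projection/reflection, so all derivatives stay bounded; the only $\bR^{4n}$ variables that actually appear are $(r,\rho)$ (resp. $(y,\eta)$), and derivatives in the inert variables vanish. Then $b=\sigma\circ\cA^{-1}$ and $\tilde b=\tilde\sigma\circ\cA^{-1}$ are compositions with the fixed invertible linear map $\cA^{-1}$, and by the chain rule each $\partial^\alpha(\sigma\circ\cA^{-1})$ is a finite linear combination of $(\partial^\beta\sigma)\circ\cA^{-1}$ with $|\beta|=|\alpha|$ and constant coefficients depending only on $\cA^{-1}$, hence bounded; the same for $\tilde b$. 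Finally $c=b\tilde b\in S^0_{0,0}(\bR^{4n})$ because $S^0_{0,0}$ is an algebra under pointwise multiplication (Leibniz rule), which is also consistent with Theorem \ref{compweyl}. The only genuinely delicate point — and the one I would write out carefully — is the bookkeeping of variables in the two "Weyl-commutes-with-(conjugated)-tensoring" claims: the identity $W(u_1\otimes u_2,v_1\otimes v_2)=W(u_1,v_1)\otimes W(u_2,v_2)$ holds only after permuting the $\rdd$ phase-space coordinates into the order $(r,\rho,y,\eta)$, and the reflection $\eta\mapsto-\eta$ entering $\tilde\sigma$ comes precisely from $W(\,\cdot\,,\bar g)$ versus $\overline{W(\,\cdot\,,g)}$; getting these signs and orderings right is where an error would most likely creep in, so I would state the tensor identity as a short preliminary lemma with the permutation made explicit before assembling the three formulas.
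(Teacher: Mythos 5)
The paper does not actually prove Proposition~\ref{propGCR1}: it cites it as Theorem~5.4 of \cite{cordero2024wigner} and moves on. So there is no ``paper proof'' to match against, and the question reduces to whether your blind argument is itself sound. It is. The reduction to the tensor picture is exactly the right move: a Weyl symbol on $\bR^{4n}$ depending only on $(r,\rho)$ quantizes to $\Op^\w(a)\otimes\mathrm{Id}$, and one depending only on $(y,\eta)$ quantizes to $\mathrm{Id}\otimes\Op^\w(\cdot)$; the computation $\overline{\Op^\w(a)g}=\Op^\w(\bar a(\cdot,-\cdot))\bar g$ correctly explains the sign flip in the $\eta$-variable in $\tilde\sigma$; and the intertwining \eqref{intertOpwMetap} in the form $\hat\cA\,\Op^\w(\sigma)\,\hat\cA^{-1}=\Op^\w(\sigma\circ\cA^{-1})$ turns $\sigma,\tilde\sigma$ into $b,\tilde b$.

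One point is worth emphasizing because it is the only place where a casual reader could slip. Passing from \eqref{CGR-f1} and \eqref{CGR-f2} to \eqref{CGR-f3} via $W_\cA(\Op^\w(a)f)=\Op^\w(b)\Op^\w(\tilde b)W_\cA f$ is not enough on its own, since for generic Weyl symbols $\Op^\w(b)\Op^\w(\tilde b)\neq\Op^\w(b\tilde b)$ (the Weyl product is not the pointwise product). The identity $\Op^\w(b)\Op^\w(\tilde b)=\Op^\w(b\tilde b)$ holds here precisely because $\sigma$ and $\tilde\sigma$ depend on disjoint blocks of variables, so $\Op^\w(\sigma)\Op^\w(\tilde\sigma)=\Op^\w(a)\otimes\Op^\w(\bar a(\cdot,-\cdot))=\Op^\w(\sigma\tilde\sigma)$, and conjugating by $\hat\cA$ carries this through; you make exactly this point in the ``or directly note that\ldots'' clause, which is the load-bearing part. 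The symbol-class bookkeeping ($S^0_{0,0}$ is stable under extension by inert variables, under composition with a fixed linear isomorphism, and under pointwise products) is also correct. In short: the proof is complete and correct, and if the paper wished to make the proposition self-contained rather than a citation, this is essentially the argument one would write, with the tensor-factorization and disjoint-variable product lemma stated explicitly as you propose.
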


We will use the following boundedness result for pseudodifferential operators \cite{Toft:2004aa}.
\begin{proposition}\label{propBddL2}
	Let $c\in S^0_{0,0}(\bR^{4n})$. Then, $\Op^\w(c)$ is bounded on $L^2_{v_{s}}(\rdd)$ for every $s\geq0$.
\end{proposition}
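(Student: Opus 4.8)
The statement to prove is that $\Op^\w(c)$ is bounded on $L^2_{v_s}(\rdd)$ for every $s \geq 0$, where $c \in S^0_{0,0}(\bR^{4n})$. Note carefully: the symbol $c$ lives on $\bR^{4n}$, so $\Op^\w(c)$ acts on functions on $\bR^{2n}$, and the claim is boundedness on the weighted space $L^2_{v_s}(\bR^{2n})$. This is essentially a weighted version of the Calder\'on--Vaillancourt theorem, and the cited reference is Toft's work on continuity of pseudodifferential operators on modulation spaces.

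\textbf{Approach.} The plan is to reduce weighted $L^2$-boundedness to unweighted $L^2$-boundedness via conjugation by the weight, combined with a symbolic calculus argument. First I would recall the classical Calder\'on--Vaillancourt theorem: a symbol in $S^0_{0,0}(\bR^{4n})$ gives an operator bounded on $L^2(\bR^{2n})$; this is the case $s=0$ and can be taken as known (it is the $p=q=2$, $m \equiv 1$ instance of Toft's result, or Theorem 18.6.3 in H\"ormander). For general $s \geq 0$, observe that $\Op^\w(c)$ is bounded on $L^2_{v_s}(\bR^{2n})$ if and only if the conjugated operator $M_{v_s}\Op^\w(c)M_{v_s}^{-1}$ is bounded on $L^2(\bR^{2n})$, where $M_{v_s}$ denotes multiplication by the weight $v_s(z) = \la z\ra^s$ on $\bR^{2n}$. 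So it suffices to show this conjugated operator is again (essentially) a pseudodifferential operator with symbol in $S^0_{0,0}(\bR^{4n})$, or at least one bounded on $L^2$.

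\textbf{Key steps.} Since $v_s \notin S^0_{0,0}$ (it is unbounded), I cannot directly invoke the composition theorem (Theorem \ref{compweyl}) for the product $M_{v_s}\Op^\w(c)M_{v_s}^{-1}$; instead I would localize. Cover $\bR^{2n}$ by unit cubes $Q_j$ centered at lattice points $z_j$, with a smooth partition of unity $\sum_j \chi_j^2 = 1$ subordinate to a slightly enlarged cover, where the $\chi_j$ are translates of a fixed bump and hence have derivatives bounded uniformly in $j$. On each piece, $v_s$ is comparable to $\la z_j\ra^s$ with all derivatives controlled, so $v_s \chi_j$ and $\chi_j/v_s$ behave like $\la z_j\ra^{\pm s}$ times uniformly-$S^0_{0,0}$ functions. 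Writing $M_{v_s}\Op^\w(c)M_{v_s}^{-1} = \sum_{j,k} M_{v_s}\chi_j \Op^\w(c) \chi_k M_{v_s}^{-1}$ (inserting the partition of unity twice), each term is, up to the scalar $\la z_j\ra^s \la z_k\ra^{-s}$, a composition of Weyl operators with uniformly bounded $S^0_{0,0}$ symbols, hence bounded on $L^2$ with norm $O(\la z_j\ra^s\la z_k\ra^{-s})$ — far too crude to sum. The genuine mechanism must therefore exploit \emph{almost-orthogonality}: the off-diagonal pieces with $|z_j - z_k|$ large have rapidly decaying operator norm because the symbol $c$ is smooth (pseudolocality gives decay in $|z_j-z_k|$ faster than any power, beating the $\la z_j\ra^s\la z_k\ra^{-s}$ growth which is controlled by a power of $|z_j-z_k|$), and then a Cotlar--Stein or Schur-test summation over the resulting almost-banded matrix of operators closes the estimate.

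\textbf{Main obstacle.} The hard part is precisely this almost-orthogonality bookkeeping: making rigorous that the kernel of $\Op^\w(c)$, while not compactly supported, has enough off-diagonal decay (coming only from $c \in C^\infty$ with bounded derivatives, i.e.\ testing against oscillatory factors and integrating by parts in the oscillatory integral defining $\Op^\w(c)$) to control the weight ratio $v_s(z_j)/v_s(z_k) \lesssim (1+|z_j-z_k|)^s$, uniformly in $s$ over bounded ranges, and then assembling via Cotlar--Stein. In a clean write-up, however, the right move is simply to cite Toft's theorem directly: the claim is the special case of boundedness of $\Op^\w(c): M^{2}_{v_s}(\bR^{2n}) \to M^2_{v_s}(\bR^{2n})$ on the weighted modulation space, which coincides with $L^2_{v_s}$ when the weight $v_s$ acts only on the space variable, and Toft's continuity results cover symbols in the Sj\"ostrand class $M^{\infty,1}$, which contains $S^0_{0,0}(\bR^{4n})$. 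So the proof I would actually present is: observe $S^0_{0,0}(\bR^{4n}) \subseteq M^{\infty,1}(\bR^{4n})$ (a standard fact, e.g.\ via the characterization recalled later in the paper); invoke \cite{Toft:2004aa} for boundedness on the weighted modulation space $M^{2,2}_{1\otimes v_s} = L^2_{v_s}$; and note the weight $v_s \geq 1$ belongs to the admissible class, being moderate with respect to itself. This sidesteps the technical core while remaining faithful to the cited literature.
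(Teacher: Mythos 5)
The paper gives no proof of Proposition~\ref{propBddL2}; it simply states the result with a citation to Toft's paper \cite{Toft:2004aa}. Your final route --- observe $S^0_{0,0}(\bR^{4n}) \subseteq M^{\infty,1}$, invoke Toft's boundedness on weighted modulation spaces, and identify the relevant modulation space with $L^2_{v_s}$ --- is therefore exactly what the paper does, so in that sense your proposal matches the paper's approach.

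One notational slip is worth flagging. You write ``$M^{2,2}_{1\otimes v_s} = L^2_{v_s}$,'' but in the standard convention (the one used later in the paper, e.g.\ in the definition of $\mifs$), the weight $1\otimes v_s$ acts on the \emph{frequency} variable of the STFT, giving the Sobolev-type space $H^s$; the identification with $L^2_{v_s}$ requires the weight on the \emph{spatial} variable, i.e.\ $M^{2,2}_{v_s\otimes 1}(\rdd) = L^2_{v_s}(\rdd)$. Your own parenthetical remark (``when the weight $v_s$ acts only on the space variable'') shows you understand the distinction, but the tensor factor in your formula is on the wrong side. Since $v_s\otimes 1$ is $v_s$-moderate by Peetre's inequality and $S^0_{0,0}(\bR^{4n}) = \bigcap_{s\geq 0} M^{\infty,1}_{1\otimes v_s}(\bR^{4n})$ (as the paper notes via \cite{BastianoniDecay}), Toft's result indeed applies and the conclusion holds.

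Your preliminary sketch of a direct almost-orthogonality proof (conjugation by $v_s$, dyadic/lattice localization, off-diagonal decay of the kernel of $\Op^\w(c)$ beating the polynomial weight ratio, Cotlar--Stein summation) is sound in outline and would indeed give an independent argument, but as you yourself note, you do not carry it out; it is not needed here since the citation suffices.
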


In view of Definition \ref{defkA-def}, we need the following improvement of formula \eqref{CGR-f3}, where the metaplectic Wigner distribution $W_\cA f$ is replaced by its polarization $W_\cA(f,g)$.

\begin{proposition}
	Under the assumptions of Proposition \ref{propGCR1},
	\begin{equation}
		W_\cA(\Op^\w(a)f,\Op^\w(a)g)=\Op^\w(c)W_\cA(f,g), \qquad f,g\in L^2(\rd).
	\end{equation}
\end{proposition}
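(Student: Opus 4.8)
The plan is to deduce this polarized identity from the three identities in Proposition \ref{propGCR1}, applied twice. First I would use \eqref{CGR-f1} to handle the left slot: for $f,g\in L^2(\rd)$,
\[
W_\cA(\Op^\w(a)f,\Op^\w(a)g)=\Op^\w(b)\,W_\cA(f,\Op^\w(a)g),
\]
where $b$ is as in \eqref{def-b}. Then I would apply \eqref{CGR-f2} to the second slot of the remaining term, obtaining
\[
W_\cA(f,\Op^\w(a)g)=\Op^\w(\tilde b)\,W_\cA(f,g).
\]
Substituting gives $W_\cA(\Op^\w(a)f,\Op^\w(a)g)=\Op^\w(b)\Op^\w(\tilde b)\,W_\cA(f,g)$. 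It then remains to identify $\Op^\w(b)\Op^\w(\tilde b)$ with $\Op^\w(c)$, where $c=b\tilde b$ as in \eqref{def-c}.

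The identification of the composition is the one place where care is needed, and it splits into two sub-points. On the one hand, by Theorem \ref{compweyl}, since $b,\tilde b\in S^0_{0,0}(\bR^{4n})$ the composition $\Op^\w(b)\Op^\w(\tilde b)$ equals $\Op^\w(d)$ for some symbol $d\in S^0_{0,0}(\bR^{4n})$, so the operator is at least well-defined and bounded on $L^2(\bR^{4n})$, which justifies all the manipulations above at the level of $L^2$. On the other hand, one must argue that in fact $d=c=b\tilde b$ (not merely $d=b\#\tilde b$, the full Weyl product). This is where the special structure of $b$ and $\tilde b$ enters: from \eqref{sigmasigmatilde}, $\sigma$ depends only on the variables $(r,\rho)$ and $\tilde\sigma$ only on $(y,\eta)$, so after the common change of variables $\cA^{-1}$ the symbols $b$ and $\tilde b$ depend on complementary sets of the $4n$ phase-space variables; more precisely $b$ is constant along the directions on which $\tilde b$ varies and vice versa. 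For such a pair the Weyl (Moyal) product degenerates to the pointwise product, since every term in the asymptotic expansion beyond the leading one involves a derivative of $b$ paired with a derivative of $\tilde b$ in conjugate variables, and at least one factor in each such term vanishes. I expect this to be essentially the content already used to prove \eqref{CGR-f3} from \eqref{CGR-f1}--\eqref{CGR-f2} in the non-polarized case ($f=g$), and the cleanest route is simply to invoke that same mechanism.

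The main obstacle, then, is not a deep one but a bookkeeping one: making rigorous the claim that $\Op^\w(b)\Op^\w(\tilde b)=\Op^\w(b\tilde b)$ for symbols depending on disjoint groups of variables, in the $S^0_{0,0}$ setting where one cannot rely on trace-class or Hilbert--Schmidt arguments. The safest way is to observe that this is exactly the situation already encountered in Proposition \ref{propGCR1}: formula \eqref{CGR-f3} is obtained from \eqref{CGR-f1} and \eqref{CGR-f2} by composing $\Op^\w(b)$ and $\Op^\w(\tilde b)$ and recognizing the result as $\Op^\w(c)$ with $c=b\tilde b$. Since the present statement only differs from \eqref{CGR-f3} in that we track $(f,g)$ rather than $(f,f)$, and the chain of identities \eqref{CGR-f1}--\eqref{CGR-f2} is already stated for the polarized distribution $W_\cA(f,g)$, the proof is immediate once this composition fact is in hand; the density and continuity considerations needed to pass from $\cS(\rd)$ to $L^2(\rd)$ are supplied by Proposition \ref{propBddL2} together with the $L^2$-boundedness of $W_\cA$. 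I would therefore write the proof as: apply \eqref{CGR-f1}, then \eqref{CGR-f2}, then \eqref{def-c} together with the composition of Weyl operators with symbols in complementary variables, concluding $W_\cA(\Op^\w(a)f,\Op^\w(a)g)=\Op^\w(c)W_\cA(f,g)$.
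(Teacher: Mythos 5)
Your proof is correct and follows exactly the same route as the paper: apply \eqref{CGR-f1} in the first slot, then \eqref{CGR-f2} in the second, and identify $\Op^\w(b)\Op^\w(\tilde b)=\Op^\w(c)$ with $c=b\tilde b$ via \eqref{def-c}. The extra discussion of why the Weyl product collapses to the pointwise product is the content already implicit in \eqref{CGR-f3} of Proposition \ref{propGCR1}, and invoking that, as you ultimately do, is precisely what the paper does as well.
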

\begin{proof}
	Let $f,g\in L^2(\rd)$, and let $b,\tilde b$ and $c$ be as in \eqref{def-b}, \eqref{def-tildeb} and \eqref{def-c}. By \eqref{CGR-f1} and \eqref{CGR-f2},
	\begin{equation}\begin{split}
		W_\cA(\Op^\w(a)f,\Op^\w(a)g)&=\Op^\w(b)W_\cA(f,\Op^\w(a)g)=\Op^\w(b)\Op^\w(\tilde b)W_\cA(f,g)\\
		&=\Op^\w(c)W_\cA (f,g),
	\end{split}\end{equation}
	and we are done.
	
\end{proof}

Consequently, if $a\in S^0_{0,0}(\rdd)$, then \eqref{defKA} holds for $T=\Op^\w(a)$ with $K_\cA=\Op^\w(c)$, and the $\cA$-Wigner kernel of $\Op^\w(a)$ is the Schwartz kernel of $\Op^\w(c)$, i.e., by \eqref{kersym}
\begin{equation}\label{AWignerKerOpw}
	k_\cA(X,Y)=\int_{\rdd}e^{2\pi i(X-Y)Z}c\Big(\frac{X+Y}{2},Z\Big)dZ.
\end{equation}

\subsection{The $\cA$-Wigner wave front set }
In this section, we propose a definition of Wigner wave fronts for $L^2$ functions. Classically, the definition of Wigner wave front set is related to the notion of the Gabor wave front set, given in Definition \ref{def.WFG}. 
In the literature, Wigner wave fronts have been extended by replacing the STFT with other shift-invertible metaplectic Wigner distributions, such as the cross-$\tau$-Wigner distributions with $0<\tau<1$. The reason for considering shift-invertibility is that shift-invertible distributions are basically rescaled STFT and, therefore, if $W_\cA$ is shift-invertible, and $g$ is a Schwartz window, then $W_\cA(f,g)$ is continuous and Definition \ref{def.WFG} makes sense.

In \cite{cordero2022wigner}, the authors define the so-called $\tau$-{\em Wigner wave front sets} by removing the smoothing window in Definition \ref{def.WFG}, with $W_\tau$ replacing the STFT. The lack of the mitigating effect of $g$ forced the authors to consider $f\in L^2(\rd)$. The {\em $\cA$-Wigner wave front set} is defined as in \eqref{defWFtau-intro} for functions in $L^2(\rd)$. Here, we define the $\cA$-Wigner wave front set and the $\cA$-Wigner wave front set of order $s\geq 0$ as follows. 

\begin{definition}\label{defWFtaus}
Let $f \in L^2(\mathbb{R}^n)$, $\hat\cA \in \Mp(2n,\mathbb{R})$ and $s \geq 0$. 
We say that $0 \neq X_0 \notin WF_\cA^s(f)$, 
\emph{the $\cA$-Wigner wave front set of order $s$} of $f$, 
if there exists an open cone $\Gamma_{X_0}\subseteq \dot{\R}^{2n}$ containing $X_0$ such that $W_\cA f \in L^2_{v_s}(\Gamma_{X_0})$.
\end{definition}

The $\cA$-wave front sets (of order $s$) associated to the $\tau$-Wigner distributions $W_\tau$, are called {\em $\tau$-Wigner wave front set (of order $s$)}, and they are denoted by $WF_\tau(f)$, resp. $WF^s_\tau(f)$. If $\tau=1/2$, we simply write $WF_\mathrm{w}(f)$, the {\em Wigner wave front set of $f$}, similar notation will be used for $WF^s_\mathrm{w}(f)$. 

\begin{remark}
The definition of the $\tau$-Wigner wave front set is a rephrase of the integral definition given in \cite[Definition 5.3]{cordero2022wigner}  for $\cA=\cA_\tau$, see \eqref{Atau}. 
%
\end{remark}

Now, as stated in Theorem 5.4 of \cite{cordero2022wigner}, given  $f \in L^2(\mathbb{R}^n)$ and $\hat \cA \in \Mp(2n,\mathbb{R})$
\[
{WF_\cA (f)}=\emptyset \Rightarrow f \in \mathcal{S}(\mathbb{R}^n).
\]
On the other hand, the $\cA$-Wigner wave front set of order $s \geq 0$ of $f$ roughly encodes the {\em $L^2_{v_s}$-microlocal concentration} of the corresponding $\cA$-Wigner distribution {$W_\cA f$}, and, as expected, this leads to the following proposition. 
\begin{proposition}\label{prop.WFsifffL2vs}
    Let $f \in L^2(\rd)$, $\cA \in \Sp(2n,\mathbb{R})$ and let $s \geq 0$. Then, we have 
    \[
    WF^s_\cA(f)=\emptyset \iff W_\cA f \in L_{v_s}^2(\mathbb{R}^{2n}).
    \]
\end{proposition}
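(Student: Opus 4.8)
The statement is an equivalence, and the nontrivial direction is the forward implication; the reverse is immediate. The plan is to exploit the compactness of the unit sphere together with the fact that the failure of $WF^s_\cA(f)=\emptyset$ is recorded cone by cone.

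First I would observe the easy direction: if $W_\cA f\in L^2_{v_s}(\rdd)$, then for \emph{every} $X_0\in\dot\R^{2n}$ we may take $\Gamma_{X_0}=\dot\R^{2n}$ in Definition \ref{defWFtaus}, since $\chi_{\dot\R^{2n}}W_\cA f=W_\cA f\in L^2_{v_s}(\rdd)$. Hence no point lies in $WF^s_\cA(f)$, i.e. $WF^s_\cA(f)=\emptyset$.

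For the forward direction, suppose $WF^s_\cA(f)=\emptyset$. By definition, for each $X_0\in\mathbb{S}^{2n-1}$ there is an open cone $\Gamma_{X_0}$ containing $X_0$ with $W_\cA f\in L^2_{v_s}(\Gamma_{X_0})$. The traces $\{\Gamma_{X_0}\cap\mathbb{S}^{2n-1}\}_{X_0\in\mathbb{S}^{2n-1}}$ form an open cover of the compact set $\mathbb{S}^{2n-1}$, so there is a finite subcover indexed by $X_1,\dots,X_N$, and correspondingly $\dot\R^{2n}=\bigcup_{j=1}^N\Gamma_{X_j}$ because every nonzero $X$ lies on the ray through some point of $\mathbb{S}^{2n-1}$ and each $\Gamma_{X_j}$ is conic. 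Then I would estimate
\begin{equation}
\|W_\cA f\|_{L^2_{v_s}(\rdd)}^2=\int_{\rdd}\chi_{\bigcup_j\Gamma_{X_j}}(X)\,|W_\cA f(X)|^2 v_s(X)^2\,dX\le \sum_{j=1}^N\|W_\cA f\|_{L^2_{v_s}(\Gamma_{X_j})}^2<\infty,
\end{equation}
using subadditivity of the indicator of a union. Here I use that $W_\cA f\in L^2(\rdd)$ already (Moyal's identity / the $L^2$-boundedness of $W_\cA$), so the integrand is measurable and the decomposition is legitimate; the sum is finite because each summand is finite by the choice of the cones. This yields $W_\cA f\in L^2_{v_s}(\rdd)$.

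The only mild subtlety — hardly an obstacle — is passing from the pointwise cone condition to the finite cover: one must note that the cones in Definition \ref{defWFtaus} are genuinely conic (invariant under positive dilations), so that intersecting with $\mathbb{S}^{2n-1}$ loses no information and a finite cover of the sphere lifts back to a finite cover of $\dot\R^{2n}$. Everything else is the trivial bookkeeping of splitting an $L^2_{v_s}$-norm over finitely many (possibly overlapping) pieces.
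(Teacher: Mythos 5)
Your proof is correct and follows essentially the same route as the paper: the key step in both is the compactness of $\mathbb{S}^{2n-1}$ yielding a finite cover of $\dot{\R}^{2n}$ by cones on which $W_\cA f \in L^2_{v_s}$. The only (cosmetic) difference is that the paper assembles the finite cover via a subordinate partition of unity, while you use the more direct subadditivity bound $\chi_{\bigcup_j \Gamma_{X_j}} \le \sum_j \chi_{\Gamma_{X_j}}$, which slightly streamlines the final estimate.
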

\begin{proof}
    If $W_\cA f \in L_{v_s}^2(\mathbb{R}^{2n})$ we obviously have that $WF^s_\cA(f)=\emptyset$. Conversely, if $WF^s_\cA(f)=\emptyset$, for every $X \in \mathbb{S}^{2n-1}$ there exists an open cone $\Gamma_{X}$ that contains $X$ such that 
    \[
  W_\cA f \in L^2_{v_s}(\Gamma_X).
    \]
    By the compactness of the sphere $\mathbb{S}^{2n-1}$ there exists $X_1,\dots, X_N \in \mathbb{S}^{2n-1}$
    and $\Gamma_{X_1}, \dots, \Gamma_{X_N}$ relative open cones such that
    \begin{equation}\label{coverofcones}
    \dot{\mathbb{R}}^{2n}=\bigcup_{j=1}^N \Gamma_{X_j}
    \end{equation}
    and 
    \begin{equation}\label{L2cone}
 W_\cA f \in L^2_{v_s}(\Gamma_{X_j}), \quad \forall j=1,\dots,n.
    \end{equation}
    Therefore, by choosing a partition of unity $\lbrace \phi_j \rbrace_{j=1}^N$ subordinate to the cover \eqref{coverofcones}, we may write 
    \[
    W_\cA f=\sum_{j=1}^N\phi_j W_\cA f=\sum_{j=1}^N \phi_j\chi_{\Gamma_{X_j}}W_\cA f
    \]
  and the result follows by applying \eqref{L2cone} on every addendum of the right hand-side. 
\end{proof}


\subsection{$\cA$-Wigner microlocality for pseudodifferential operators}\label{sec:MWFs}

This subsection is devoted to proving the so-called \emph{microlocality} of pseudodifferential operators, in terms of the $\cA$-Wigner wave front set, that is Theorem \ref{theo.microlocalitycA}.
For the purpose, we first need the equivalent of \cite[Lemma 5.3]{cordero2023wigner} for $\cA$-Wigner kernels. Its proof requires the following version of Calderón--Vaillancourt Theorem \cite{CalderonVaillancourt}.
\begin{theorem}\label{CVtheorem}
	Let $d\in S^0_{0,0}(\bR^{3n})$. Then, the operator
	\begin{equation}
		Tf(x)=\int_{\rdd}e^{2\pi i(x-y)\zeta}d(x,y,\zeta)f(y)dyd\zeta,
	\end{equation}
	is bounded on $L^2(\rd)$.
\end{theorem}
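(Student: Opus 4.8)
The plan is to run a Cotlar--Stein almost-orthogonality argument on a phase-space decomposition of $T$; the only genuine difficulty is that symbols in $S^0_{0,0}$ carry no decay in the spatial variables, so one cannot integrate by parts on the full operator and must first localize in phase space. As a preliminary step I would pass to the left (Kohn--Nirenberg) form of $T$: setting
\begin{equation}\label{eq.leftsymbolCV}
	p(x,\zeta)=\iint_{\rdd}e^{-2\pi i w\cdot v}\,d(x,x+w,\zeta+v)\,dv\,dw
\end{equation}
(an oscillatory integral), repeated integration by parts --- using that $(1-\Delta_v)^N$ and $(1-\Delta_w)^N$ reproduce the phase $e^{-2\pi i w\cdot v}$ up to the factors $\langle w\rangle^{2N}$ and $\langle v\rangle^{2N}$ respectively, together with the uniform bounds on $\partial^\alpha d$ --- shows that $p\in S^0_{0,0}(\rdd)$ and that $Tf(x)=\int_{\rd}e^{2\pi i x\zeta}p(x,\zeta)\hat f(\zeta)\,d\zeta$. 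Hence it suffices to prove the $L^2$-boundedness of $T$ in this left-quantized form.

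Next I would fix $\psi,\chi\in C^\infty_c(\rd)$ supported in the unit ball with $\sum_{j\in\mathbb{Z}^n}\psi(\cdot-j)\equiv 1$ and $\sum_{m\in\mathbb{Z}^n}\chi(\cdot-m)\equiv 1$, and set $p_{j,m}(x,\zeta)=\psi(x-j)\chi(\zeta-m)\,p(x,\zeta)$, so that $T=\sum_{(j,m)\in\mathbb{Z}^{2n}}T_{j,m}$, with $T_{j,m}$ the left-quantized operator with symbol $p_{j,m}$. The goal is the almost-orthogonality estimates
\begin{equation}\label{eq.CVao}
	\|T_{j,m}^{*}T_{j',m'}\| + \|T_{j,m}T_{j',m'}^{*}\| \lesssim_{N}\langle j-j'\rangle^{-N}\langle m-m'\rangle^{-N}, \qquad N\ge 0,
\end{equation}
with $\|\cdot\|$ the operator norm on $L^2(\rd)$ and constants uniform in the indices. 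Here the two terms are treated dually. The frequency-side kernel of $T_{j,m}^{*}T_{j',m'}$ carries the factor $\psi(x-j)\psi(x-j')$ in its (now absolutely convergent) $x$-integral, hence vanishes unless $|j-j'|\le 2$; when $|j-j'|\le 2$, integrating by parts in $x$ against the phase $e^{2\pi i x\cdot(\eta-\zeta)}$ --- with $|\zeta-m|\le 1$, $|\eta-m'|\le 1$ forced by the cutoffs and the $S^0_{0,0}$-bounds controlling the amplitude --- produces a factor $\langle \eta-\zeta\rangle^{-N}\lesssim\langle m-m'\rangle^{-N}$, and a Schur estimate on the resulting frequency kernel bounds the first term. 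Dually, $T_{j,m}T_{j',m'}^{*}$ carries the factor $\chi(\zeta-m)\overline{\chi(\zeta-m')}$ in its $\zeta$-integral, hence vanishes unless $|m-m'|\le 2$, and integration by parts in $\zeta$ against $e^{2\pi i(x-x')\cdot\zeta}$, followed by a Schur estimate, bounds the second term.

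With \eqref{eq.CVao} in hand, choosing $N$ large enough that $\sum_{(j',m')}(\langle j-j'\rangle^{-N}\langle m-m'\rangle^{-N})^{1/2}<\infty$ uniformly in $(j,m)$, the Cotlar--Stein lemma yields that $T=\sum_{j,m}T_{j,m}$ is bounded on $L^2(\rd)$. I expect the main obstacle to be the bookkeeping behind \eqref{eq.CVao}: in each composition one cutoff is transported unchanged through the product and its overlap with the neighbouring copy forces the vanishing of far-apart indices, while the other cutoff meets its partner only after conjugation and must be made to yield rapid decay through the oscillatory factor; the symbol reduction \eqref{eq.leftsymbolCV} is routine but must be carried out with some care, since $S^0_{0,0}$ leaves no decay to spare.
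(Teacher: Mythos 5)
The paper does not supply a proof of this result: it is cited as a known theorem, attributed to the original article of Calder\'on and Vaillancourt, so there is no in-text argument to compare against. Your proposal is a correct, self-contained proof via the now-standard Cotlar--Stein mechanism, and the steps are all sound: the oscillatory-integral reduction from the amplitude $d(x,y,\zeta)$ to a left (Kohn--Nirenberg) symbol $p(x,\zeta)\in S^0_{0,0}(\rdd)$, regularized by integrations by parts in $v$ and $w$ against the phase $e^{-2\pi i w\cdot v}$ to gain $\langle w\rangle^{-2N}\langle v\rangle^{-2M}$; the unit-scale phase-space partition of unity; and the two dual almost-orthogonality estimates in which one localizing cutoff forces a support restriction on the index pair while the other produces rapid decay by oscillation. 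One small observation: what the computation actually gives for $T^*_{j,m}T_{j',m'}$ (and dually for $T_{j,m}T^*_{j',m'}$) is a support condition $|j-j'|\le 2$ together with algebraic decay $\langle m-m'\rangle^{-N}$, not genuine product decay in both index differences; since a support condition dominates any algebraic decay, the product form you wrote is implied and the Cotlar--Stein summability hypothesis follows either way. The remaining routine points to spell out are the uniform boundedness of the pieces $T_{j,m}$ (the diagonal case of your almost-orthogonality estimate) and the convergence of $\sum_{j,m}T_{j,m}f$ to $Tf$ for $f\in\cS(\rd)$.
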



\begin{lemma}\label{lemma53}
	Let $a\in S^0_{0,0}(\rdd)$ be the symbol of the pseudodifferential operator $\Op^\w(a)$. Let $k_\cA$ be the $\cA$-Wigner kernel of $\Op_\cA(a)$, and $c$ be defined as in \eqref{def-c}. 
\noindent Then, for any integer $N\geq0$,
	\begin{equation}\label{defkN}
		k_N(X,Y):=\la X-Y\ra^{2N} k_\cA(X,Y),
	\end{equation}
	is the kernel of an operator $\Op^\w(c_N)$ with $c_N\in S^0_{0,0}(\rdd)$. Moreover, assume $\phi,\varphi\in\mathcal{C}^\infty(\rd)$ with bounded derivatives of any order and support in two disjoint open cones in $\rd\setminus\{0\}$ for large $z$. Then, for every $s\in \mathbb{R}$ the operator $P_s$ with kernel
	\begin{equation}\label{def-kL}
		\tilde k_s(X,Y)=\phi(X)\la X\ra^s k_\cA(X,Y)\varphi(Y)
	\end{equation}
	is bounded on $L^2(\rd)$.
\end{lemma}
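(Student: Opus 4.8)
The plan is to treat the two assertions separately, reducing each to the Calderón–Vaillancourt theorem (Theorem~\ref{CVtheorem}) after computing the relevant kernels explicitly. For the first assertion, recall from \eqref{AWignerKerOpw} that $k_\cA(X,Y)=\int_{\rdd}e^{2\pi i(X-Y)Z}c\big(\tfrac{X+Y}{2},Z\big)dZ$, so that $k_\cA$ is the Weyl kernel of $\Op^\w(c)$ with $c\in S^0_{0,0}(\bR^{4n})$ by Proposition~\ref{propGCR1}. Multiplying by $\la X-Y\ra^{2N}$ and integrating by parts in $Z$: since $\la X-Y\ra^{2N}e^{2\pi i(X-Y)Z}=\big(\text{a polynomial differential operator of order }2N\text{ in }\partial_Z\big)e^{2\pi i(X-Y)Z}$ (up to constants $(2\pi i)^{-1}$), transferring the $Z$-derivatives onto $c$ via integration by parts produces $k_N(X,Y)=\int_{\rdd}e^{2\pi i(X-Y)Z}c_N\big(\tfrac{X+Y}{2},Z\big)dZ$ with $c_N$ a finite sum of derivatives $\partial_Z^\alpha c$, $|\alpha|\le 2N$. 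Each such derivative remains in $S^0_{0,0}(\bR^{4n})$ because $c$ does and the class is closed under differentiation. Hence $k_N$ is the Weyl kernel of $\Op^\w(c_N)$ with $c_N\in S^0_{0,0}(\rdd)$ (here $\rdd$ is the phase space $\bR^{4n}$ appropriate to $4n$-dimensional $X$), which is the first claim.

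For the second assertion, write the kernel of $P_s$ as $\tilde k_s(X,Y)=\phi(X)\la X\ra^s k_\cA(X,Y)\varphi(Y)$. The idea is to insert the factor $\la X-Y\ra^{2N}\la X-Y\ra^{-2N}$ and use the first part: $\tilde k_s(X,Y)=\phi(X)\la X\ra^s\la X-Y\ra^{-2N}\cdot k_N(X,Y)\cdot\varphi(Y)$, where $k_N(X,Y)=\int e^{2\pi i(X-Y)Z}c_N(\tfrac{X+Y}{2},Z)dZ$ as above. Writing $k_N$ out, $P_s$ becomes an oscillatory-integral operator
\[
P_sf(X)=\int_{\rdd\times\rdd}e^{2\pi i(X-Y)Z}\,d_s(X,Y,Z)\,f(Y)\,dY\,dZ,
\]
with amplitude $d_s(X,Y,Z)=\phi(X)\la X\ra^s\la X-Y\ra^{-2N}c_N(\tfrac{X+Y}{2},Z)\varphi(Y)$. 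To conclude via Theorem~\ref{CVtheorem}, I must show $d_s\in S^0_{0,0}$ in the variables $(X,Y,Z)$, i.e.\ that $d_s$ together with all its derivatives is bounded. The factor $c_N(\tfrac{X+Y}{2},Z)$ and its derivatives are bounded since $c_N\in S^0_{0,0}$; $\phi,\varphi$ and their derivatives are bounded by hypothesis. The delicate point is the product $\phi(X)\la X\ra^s\la X-Y\ra^{-2N}\varphi(Y)$: on the support of $\phi(X)\varphi(Y)$, for large $|X|$ the points $X$ and $Y$ lie in \emph{disjoint open cones} (for $|X|,|Y|$ large), so there is $\delta>0$ with $|X-Y|\ge\delta(|X|+|Y|)\gtrsim\delta|X|$ there; hence $\la X\ra^s\la X-Y\ra^{-2N}\lesssim\la X\ra^{s-2N}$, which is bounded once $2N\ge s$. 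Choosing $N$ with $2N\ge\max\{s,0\}$ (and $N$ an integer, as required in the first part), and checking that differentiating the whole product only improves or preserves the decay (each $\partial_X$ or $\partial_Y$ on $\la X-Y\ra^{-2N}$ gains a factor $\la X-Y\ra^{-1}$, each on $\la X\ra^s$ loses at most one power of $\la X\ra^{-1}$), we get $d_s\in S^0_{0,0}(\bR^{3\cdot 2n})$ and Theorem~\ref{CVtheorem} yields boundedness of $P_s$ on $L^2$.

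The main obstacle is the bookkeeping in the last step: one must verify carefully that \emph{all} mixed derivatives $\partial_X^{\alpha}\partial_Y^{\beta}\partial_Z^{\gamma}d_s$ are bounded on the relevant support, uniformly, and in particular that the cone-separation estimate $|X-Y|\gtrsim|X|+|Y|$ survives differentiation and is genuinely available wherever $\phi(X)\varphi(Y)\neq0$ for large arguments (for small $|X|,|Y|$ everything is bounded trivially by compactness, so one needs a smooth cutoff splitting these regimes). A secondary technical point is that $k_\cA$, hence $\tilde k_s$, is a priori only a tempered distribution, so the oscillatory-integral manipulations and the application of Theorem~\ref{CVtheorem} should be justified either by a standard regularization/limiting argument or by interpreting all identities distributionally; since $c, c_N\in S^0_{0,0}$ this is routine but should be acknowledged. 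Once these points are settled, both conclusions follow.
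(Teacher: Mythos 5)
Your proof follows essentially the same approach as the paper's: integration by parts in $Z$ to transfer the weight $\la X-Y\ra^{2N}$ onto $c$ (the paper uses explicitly $c_N = (1-\tfrac{1}{4\pi^2}\Delta_Z)^N c$, which is your ``polynomial differential operator of order $2N$''), and then for the second part, inserting $\la X-Y\ra^{2N}\la X-Y\ra^{-2N}$, writing $P_s$ as an oscillatory integral with an amplitude $d_{s,N}$, using the cone-disjointness of $\supp\phi$ and $\supp\varphi$ to trade $\la X\ra^s$ for $\la X-Y\ra^s$ on the joint support, choosing $N\geq s/2$, and invoking Calder\'on--Vaillancourt. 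Your treatment of the cone-separation estimate is in fact a touch more explicit than the paper's inequality chain (which is stated somewhat abruptly), and your closing remarks about the small-$|X|,|Y|$ region and the distributional interpretation are accurate caveats that the paper handles implicitly; no genuine gap.
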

\begin{proof}
	We follow the same pattern of the proof of \cite[Lemma 5.3]{cordero2023wigner}. Let
	\begin{equation}\label{def-cN}
		c_N(X,Z):=\Big(1-\frac{1}{(2\pi)^2}\Delta_Z\Big)^Nc(X,Z).
	\end{equation}
	Clearly, $c_N\in S^0_{0,0}(\bR^{4n})$ because $c\in S^0_{0,0}(\bR^{4n})$. We prove that $k_N$ defined as in \eqref{defkN} is the Schwartz kernel of $\Op^\w(c_N)$. For $f\in\cS(\rdd)$, by using \eqref{AWignerKerOpw}, we have
	\begin{align}
		\int_{\rdd}k_N(X,Y)f(Y)dY&=\int_{\rdd}\la X-Y\ra^{2N} k_\cA(X,Y)f(Y)dY\\
		&=\int_{\rdd}\int_{\rdd}\la X-Y\ra^{2N}e^{2\pi i(X-Y)Z}c\Big(\frac{X+Y}{2},Z\Big)dZ f(Y)dY,
	\end{align}
	which entails that
	\begin{align}
		k_N(X,Y)=\int_{\rdd}\la X-Y\ra^{2N}e^{2\pi i(X-Y)Z}c\Big(\frac{X+Y}{2},Z\Big)dZ.
	\end{align}
	An integration by parts using the classical identity for oscillatory integrals
	\begin{equation}
		\la X-Y\ra^{2N}e^{2\pi i(X-Y)Z}=\Big(1-\frac{1}{4\pi^2}\Delta_Z\Big)^Ne^{2\pi i(X-Y)Z}
	\end{equation}
	(see e.g. \cite[Proposition 0.3]{rodino_book}) yields to
	\begin{equation}\label{kN-duringproof}
		k_N(X,Y)=\int_{\rdd} e^{2\pi i(X-Y)Z}c_N\Big(\frac{X+Y}{2},Z\Big)dZ,
	\end{equation}
	with $c_N$ defined as in \eqref{def-cN}. This concludes the proof of the first part of this Lemma. The second part goes as follows. Let $N\geq0$ to be fixed. By plugging the corresponding \eqref{defkN} into \eqref{def-kL} we find
	\begin{equation}\label{kL-duringproof}
		\tilde k_s(X,Y)=\phi(X)\la X\ra^s\la X-Y\ra^{-2N}k_N(X,Y)\varphi(Y).
	\end{equation}
	The operator with kernel $\tilde k_s$ is 
	\begin{align}
		P_{s}f(X)&=\int_{\bR^{2n}} \tilde k_s(X,Y)f(Y)dY=\int_{\bR^{2n}} \phi(X)\la X\ra^L\la X-Y\ra^{-2N}k_N(X,Y)\varphi(Y)f(Y)dY\\
		&\overset{\eqref{kN-duringproof}}{=}\int_{\bR^{2n}} \phi(X)\la X\ra^s\la X-Y\ra^{-2N}\int_{\rdd} e^{2\pi i(X-Y)Z}c_N\Big(\frac{Z+Y}{2},Z\Big)dZ\varphi(Y)f(Y)dY\\
		&=\int_{\bR^{4n}}e^{2\pi i(X-Y)Z}\underbrace{\phi(X)\la X\ra^s\la X-Y\ra^{-2N}c_N\Big(\frac{X+Y}{2},Z\Big)\varphi(Y)}_{=:d_{s,N}(X,Y,Z)}f(Y)dYdZ.
	\end{align}
	Since $\f$ and $\phi$ are smooth, $d_{s,N}\in \mathcal{C}^\infty(\bR^{3n})$. By the assumption, $\supp \, \phi\cap\supp \, \f\subseteq B_R$ for some $R>0$. Hence, if $X\in\supp \, \phi$ and $Y\in\supp \, \f$,
	\begin{equation}\label{ineqImp}
		\phi(X)\f(Y)\la X\ra\lesssim\chi(X)\f(Y)\la X-Y\ra\la Y\ra\lesssim (1+R^2)^{1/2}\chi(X)\f(Y)\la X-Y\ra.
	\end{equation}
	Consequently, there exists $c_{0,0,0}>0$ such that for every $X,Y,Z\in\rdd$,
	\begin{align}
		|d_{s,N}(X,Y,Z)|&= \Big|\phi(X)\la X\ra^s\la X-Y\ra^{-2N}c_N\Big(\frac{X+Y}{2},Z\Big)\varphi(Y)\Big|\\
		&\lesssim \Big|\phi(X)\la X-Y\ra^{s-2N}c_N\Big(\frac{X+Y}{2},Z\Big)\varphi(Y)\Big|\leq c_{0,0,0}
	\end{align}
	up to choose, say, $N\geq s/2 $. Arguing similarly, we can prove that for every $\alpha,\beta,\gamma\in\bN_0^{2n}$ there exists $c_{\alpha,\beta,\gamma}>0$ such that
	\begin{equation}
	|\partial^\alpha_X\partial^\beta_Y\partial_Z^\gamma d_{s,N}(X,Y,Z)|\leq c_{\alpha,\beta,\gamma}, \qquad X,Y,Z\in\rdd
	\end{equation}
	 i.e., $d_{s,N}\in S^0_{0,0}(\bR^{2n})$. The boundedness of $P_s$ on $L^2(\rd)$ follows then by Theorem \ref{CVtheorem}.

\end{proof}

We are now in the position to give the proof of Theorem \ref{theo.microlocalitycA}, which provide the so-called microlocal property for $\cA$-Wigner wave front sets (once again for any symplectic transformation $\cA$). 

\begin{proof}[Proof of Theorem \ref{theo.microlocalitycA}]
	We use the same technique that proves \cite[Theorem 1.12]{cordero2022wigner}. Let $X_0\notin WF_\cA(f)$. We have to prove that $X_0\notin WF_\cA(\Op^\w(a))$. Precisely, if $X_0\notin WF_\cA(f)$ there exists an open conic neighborhood $\Gamma_{X_0}$ of $X_0$ such that 
	\begin{equation}\label{assump1}
		\int_{\Gamma_{X_0}}\la X\ra^{2s}|W_\cA f(X)|^2dX<\infty, \quad \forall s\geq 0,
	\end{equation}
	and we want to prove that 
	\begin{equation}
		I=\int_{\Gamma_{X_0}'}\la X\ra^{2s}|W_\cA(\Op^\w(a)f)(X)|^2dX<\infty, \quad \forall s\geq0,
	\end{equation}
	for a suitable open conic neighborhood {$\Gamma_{X_0}'$} of $X_0$.
	
	\noindent Thanks to the $\cA$-Wigner analysis of $\Op^\w(a)$ developed so far, we may express $W_\cA(\Op^\w(a)f)$ as
	\begin{equation}
		W_\cA(\Op^\w(a)f)=\Op^\w(c)W_\cA f,
	\end{equation}
	where $c$ is defined as in \eqref{def-c}. Let us consider an open conic neighborhood $\Lambda_{X_0}$ of $X_0$ such that $\Lambda_{X_0}\cap \mathbb{S}^{2n-1}$ is strictly contained in $\Gamma_{X_0}\cap \mathbb{S}^{2n-1}$. Consider a smooth cut-off $\psi\in\mathcal{C}^\infty(\rdd)$, homogeneous of degree 0, such that $\psi|_{\Lambda_{X_0}}\equiv1$, $0\leq\psi\leq1$ and $\supp(\psi)\cap B_R^c\subseteq\Gamma_{X_0}$ for a suitable large $R>0$. Then,
	\begin{equation}
		I\lesssim \underbrace{\int_{\Gamma_{X_0}'}\la X\ra^{2s}|\Op^\w(c)(\psi W_\cA f)(X)|^2dX}_{=:I_1} + \underbrace{\int_{\Gamma_{X_0}'}\la X\ra^{2s}|\Op^\w(c)((1-\psi) W_\cA f)(X)|^2dX}_{=:I_2}.
	\end{equation}
	Thanks to Proposition \ref{propBddL2}, the assumption \eqref{assump1} and the support property of $\psi$, we can estimate
	\begin{equation}
		I_1\leq\norm{\Op^\w(c)(\psi W_\cA f)}_{L^2_{v_s}}\leq \norm{\psi W_\cA f}_{L^2_{v_s}}^2\lesssim \int_{\Gamma_{X_0}}\la X\ra^{2s}|W_\cA f(X)|^2dX<\infty.
	\end{equation}
	The estimate of $I_2$ requires the introduction of a new cut-off function. Let $\Lambda_{X_0}'$ and $\Gamma_{X_0}'$ be open conic neighborhoods of $ X_0 $ with $\Gamma_{X_0}'\subset\subset \Lambda_{X_0}'\subset\subset\Lambda_{X_0}\subset\subset\Gamma_{X_0}$. Let $\phi\in\mathcal{C}^\infty(\rdd)$ be a smooth cut-off, homogeneous of degree 0, with $0\leq\phi\leq1$, $\phi|_{\Gamma_{X_0}'}\equiv1$ and $\supp(\phi)\cap {B_{R'}^c}\subseteq\Lambda_{X_0}'$ for $R'>0$ sufficiently large. {Here, $B_{R'}^c=\rdd\setminus B_{R'}$}. Observe that the assumptions of the second part of Lemma \ref{lemma53} are satisfied by $\phi$ and $\f=1-\psi$. 
	
\noindent By construction of $\phi$, 
	\begin{equation}
		I_2\lesssim \int_{\rdd}|\phi(X)\la X\ra^{s}\Op^\w(c)((1-\psi) W_\cA f)(X)|^2dX.
	\end{equation}
	Now, the operator $P_s$ defined by
	\[
	L^2(\mathbb{R}^{2n}) \ni F \mapsto P_sF:=\phi \, \la \cdot \ra^{s}\Op^\w(c)((1-\psi)F)
	\]
	has kernel in the form \eqref{def-kL}, and therefore, it is bounded on $L^2(\rdd)$ by Lemma \ref{lemma53}. In conclusion,
	\begin{equation}
		I_2\lesssim \norm{P_s(W_\cA)}^2\lesssim\norm{W_\cA f}^2=\norm{f}^4<\infty,
	\end{equation}
	and we are done.
	
\end{proof}

\begin{remark}\label{rmk.stillholdscrosswigner}
Note that the microlocalization arguments used in the proof does not depend on whether we are using the ($\cA$-)Wigner distribution or the cross-($\cA$-)Wigner distribution. In other words, if we replace $W_\cA f$ with $W_\cA(f,g)$ in $I_2$, the same technique still applies.
\end{remark}

\begin{remark}\label{psimicros}
{Let us point out that the microlocal inclusion \eqref{microlocalitycA} holds for any $s \geq 0$,
that is we have the microlocality for symbols in $S_{0,0}^0(\mathbb{R}^n)$ and for the $\cA$-Wigner wave front set at any stratum. }
More precisely, by repeating the proof {of Theorem \ref{theo.microlocalitycA}},
if $a \in S_{0,0}^0(\mathbb{R}^{2n})$ and $f \in L^2(\mathbb{R}^n)$, 
we get that for
each $s \geq 0$
\[
		WF^s_\cA(\Op^\w(a)f)\subseteq WF^s_\cA(f).
\] 
\end{remark}
\section{Wigner microlocality for FIOs with quadratic phase}\label{sec:WMFIOs}
The aim of this section is to prove Theorem \ref{theo.microFIO}, which gives the $\cA$-Wigner microlocality property \eqref{microlocalitycA} for quadratic FIO \cite{cordero2023wigner,cordero2025wigner,cordero2014generalized}, defined as follows. 
\begin{definition}\label{def.quadraticfio}
Let $a \in S_{0,0}^0(\mathbb{R}^{2n})$, let $S \in \Sp(n,\mathbb{R})$ a symplectic map of the form 
\begin{equation}
S= \begin{pmatrix}
A & B \\
C & D
\end{pmatrix}, \quad \mathrm{det} \, A \neq 0,
\end{equation}
where $A,B,C,D \in \bR^{n\times n}$, and let $\Phi(x,\xi)$ 
be the associated quadratic phase, defined by 
\begin{equation}\label{quadphase}
\Phi(x,\xi)=\frac{1}{2} CA^{-1}x \cdot x + A^{-1} x \cdot \xi -\frac{1}{2} A^{-1} B\xi\cdot \xi, \quad (x,\xi) \in \mathbb{R}^{2n}.
\end{equation}
The {\em quadratic FIO} associated with the symplectic map $S$ having phase $\Phi$ and amplitude $a$
is the linear operator 
$\mathcal{K}: \mathcal{S}(\mathbb{R}^n) \rightarrow \mathcal{S}'(\mathbb{R}^n)$
defined by
\begin{equation}\label{FIOI}
\mathcal{K}f(x)= \int_{\mathbb{R}^n} e^{2\pi i \Phi(x,\xi)} a(x,\xi) \hat{f}(\xi)d\xi, \quad f \in \mathcal{S}(\mathbb{R}^n).
\end{equation}
\end{definition}
For the rest of this section, $\cK$ will be a fixed quadratic FIO, associated with $S$ as in \eqref{blockS}, with phase $\Phi$ of the form \eqref{quadphase} and amplitude $a \in S_{0,0}^0(\mathbb{R}^{2n})$. Its Wigner kernel will be denoted by $\tilde\kappa$. We also recall the following representation formula (see \cite{cordero2023wigner}). 
\begin{theorem}\label{theo.K.FIO}
We have:
\begin{equation}\label{K.FIO}
W(\mathcal{K}f,\mathcal{K}g)(x,\xi)=\tilde{\mathcal{K}} W(f,g) (x,\xi)=\int_{\mathbb{R}^{2n}}\tilde{\kappa}(x,\xi,y,\eta)W(f,g)(y,\eta)dyd\eta,
\end{equation}
where the Wigner kernel $\tilde{\kappa}$ is given by 
\[
\tilde{\kappa}(x,\xi,y,\eta)= \int_{\mathbb{R}^{2n}}e^{-2\pi i(z \cdot (\xi-\Phi_x(x,\eta))+r \cdot (y-\Phi_\eta(x,\eta)))}\tilde{a}(x,\eta,z,r) dz dr,
\]
with
\[
\tilde{a}(x,\eta,z,r)=a(x+z/2,\eta+r/2)\overline{a(x-z/2,\eta-r/2)}\in S^0_{0,0}(\mathbb{R}^{2n}).
\]
In addition, for each $N \in \mathbb{N}$, one has 
\[
\begin{split}
\tilde{\kappa}(x,\xi,y,\eta) & =\frac{1}{\langle 2\pi(\xi-\Phi_x(x,\eta),y-\Phi_\eta(x,\eta))\rangle^{2N}} \int_{\mathbb{R}^{2n}}e^{-2\pi i(z \cdot (\xi-\Phi_x(x,\eta))+r \cdot (y-\Phi_\eta(x,\eta)))}\\
&\hspace{6cm} \times (1-\Delta_{z,r})^N\tilde{a}(x,\eta,z,r) dz dr \\
&=\frac{1}{\langle 2\pi(\xi-\Phi_x(x,\eta),y-\Phi_\eta(x,\eta))\rangle^{2N}}h_N(x,\xi,S(y,\eta)),
\end{split}
\]
where $h_N$ is the kernel of a Weyl operator with symbol in $S_{0,0}^0(\mathbb{R}^{4n})$.
\end{theorem}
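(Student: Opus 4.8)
The plan is to derive both formulas directly from the general theory of Wigner kernels recalled at the beginning of Section \ref{sec:WKers}. Since $\mathcal{K}$ is linear and continuous from $\cS(\R^n)$ to $\cS'(\R^n)$, it admits a unique Wigner kernel $\tilde{\kappa}\in\cS'(\R^{4n})$, which by \eqref{WkkT} is given by $\tilde{\kappa}(x,\xi,y,\eta)=Wk_{\mathcal{K}}(x,y,\xi,-\eta)$, where $k_{\mathcal{K}}$ is the Schwartz kernel of $\mathcal{K}$; the identity $W(\mathcal{K}f,\mathcal{K}g)=\tilde{\mathcal{K}}W(f,g)$ then holds by construction of the Wigner kernel. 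So everything reduces to computing $Wk_{\mathcal{K}}$. From \eqref{FIOI} and the definition of the Fourier transform, the Schwartz kernel is the oscillatory integral $k_{\mathcal{K}}(x,y)=\int_{\R^n}e^{2\pi i(\Phi(x,\theta)-y\cdot\theta)}a(x,\theta)\,d\theta$.

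\emph{First formula.} I would substitute this into $Wk_{\mathcal{K}}(x,y,\xi,-\eta)=\int_{\R^{2n}}e^{-2\pi i(\xi z-\eta r)}k_{\mathcal{K}}(x+z/2,y+r/2)\overline{k_{\mathcal{K}}(x-z/2,y-r/2)}\,dz\,dr$, obtaining a $4n$-fold oscillatory integral in $(z,r)$ and in the two frequency variables $\theta,\theta'$ coming from the two copies of $k_{\mathcal{K}}$. Setting $\theta=\zeta+\rho/2$, $\theta'=\zeta-\rho/2$ and using that $\Phi$ is a quadratic form — so that $\Phi((x,\zeta)+(z/2,\rho/2))-\Phi((x,\zeta)-(z/2,\rho/2))=\partial_x\Phi(x,\zeta)\cdot z+\partial_\xi\Phi(x,\zeta)\cdot\rho$ — the total phase reduces to $z\cdot(\partial_x\Phi(x,\zeta)-\xi)+r\cdot(\eta-\zeta)+\rho\cdot(\partial_\xi\Phi(x,\zeta)-y)$; integrating in $r$ produces $\delta(\eta-\zeta)$, hence forces $\zeta=\eta$, and what remains — after relabeling $\rho$ as $r$ — is precisely the first asserted formula, with $\tilde a(x,\eta,z,r)=a(x+z/2,\eta+r/2)\overline{a(x-z/2,\eta-r/2)}$, which lies in $S^0_{0,0}$ because $a$ does. (These formal steps are justified as usual for oscillatory integrals with $S^0_{0,0}$ amplitudes.)

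\emph{Second formula.} Since $\tilde a$, hence $(1-\Delta_{z,r})^N\tilde a$, lies in $S^0_{0,0}$, a routine integration by parts based on $\langle 2\pi(\alpha,\beta)\rangle^{2N}e^{-2\pi i(z\cdot\alpha+r\cdot\beta)}=(1-\Delta_{z,r})^Ne^{-2\pi i(z\cdot\alpha+r\cdot\beta)}$, with $\alpha=\xi-\Phi_x(x,\eta)$ and $\beta=y-\Phi_\eta(x,\eta)$, extracts the factor $\langle 2\pi(\alpha,\beta)\rangle^{-2N}$ and leaves $\int e^{-2\pi i(z\cdot\alpha+r\cdot\beta)}b_N(x,\eta,z,r)\,dz\,dr$ with $b_N=(1-\Delta_{z,r})^N\tilde a\in S^0_{0,0}$. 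The crux is to recognize this as the Schwartz kernel of a Weyl operator evaluated at $\big((x,\xi),S(y,\eta)\big)$. For this I would use that $\Phi$ is the generating function of $S$: by \eqref{quadphase} and the symplectic relations \eqref{Sp1}--\eqref{Sp3} — in particular that $CA^{-1}$ is symmetric and $D=(A^{-1})^\top+CA^{-1}B$ — one checks that $\{\alpha=0,\ \beta=0\}$ is exactly the graph $\{(x,\xi)=S(y,\eta)\}$ and, more precisely, that setting $(p,q)=(x,\xi)-S(y,\eta)$ one has $\alpha=q-CA^{-1}p$ and $\beta=-A^{-1}p$. Since $\det A\neq0$, the map $(p,q)\mapsto(\alpha,\beta)$ is a linear isomorphism, so there is an invertible linear change of the integration variable $(z,r)\mapsto\zeta\in\R^{2n}$, with constant Jacobian, turning $z\cdot\alpha+r\cdot\beta$ into $\zeta\cdot\big((x,\xi)-S(y,\eta)\big)$; moreover $x$ is the first block of $(x,\xi)$ and $\eta$ the second block of $S^{-1}(S(y,\eta))$, so $(x,\eta)$ depends linearly on $U:=(x,\xi)$ and $V:=S(y,\eta)$. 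Hence the remaining integral takes the form $\int_{\R^{2n}}e^{2\pi i(U-V)\zeta}d_N(U,V,\zeta)\,d\zeta$ with $d_N\in S^0_{0,0}(\R^{6n})$, i.e. the kernel of an amplitude operator with $S^0_{0,0}$ amplitude; by the standard reduction of such operators to Weyl quantizations it coincides with the kernel $h_N$ of $\Op^\w(\sigma_N)$ for some $\sigma_N\in S^0_{0,0}(\R^{4n})$, which is the claim.

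The main obstacles are the two linear-algebraic reductions — the collapse $\zeta=\eta$ for the first formula and the invertibility of $(p,q)\mapsto(\alpha,\beta)$ for the second — which are precisely the statement that $\Phi$ generates $S$ and therefore rest entirely on \eqref{quadphase}, \eqref{Sp1}--\eqref{Sp3} and $\det A\neq0$; once these are settled, checking that the amplitudes remain in $S^0_{0,0}$ and invoking the amplitude-to-Weyl reduction is routine.
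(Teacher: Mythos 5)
The paper does not prove this theorem; it is \emph{recalled} from \cite{cordero2023wigner} with no proof given in the text. Your reconstruction, however, is correct and is precisely the computation one would expect the cited reference to carry out: starting from $\tilde\kappa(x,\xi,y,\eta)=Wk_{\mathcal K}(x,y,\xi,-\eta)$ via \eqref{WkkT}, substituting the oscillatory-integral form of $k_{\mathcal K}$, using the center/difference change of variables $\theta=\zeta+\rho/2$, $\theta'=\zeta-\rho/2$ together with the exact identity $\Phi(u+v)-\Phi(u-v)=2\nabla\Phi(u)\cdot v$ for the quadratic form $\Phi$, collapsing the $r$-integral to $\delta(\eta-\zeta)$, and finally relabeling $\rho\mapsto r$. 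The linear-algebraic facts you invoke are all genuine consequences of the symplectic relations with $\det A\neq0$: $CA^{-1}$ and $A^{-1}B$ are symmetric, $D=(A^{-1})^\top+CA^{-1}B$, and hence, writing $(p,q)=(x,\xi)-S(y,\eta)$, one gets $\xi-\Phi_x(x,\eta)=q-CA^{-1}p$ and $y-\Phi_\eta(x,\eta)=-A^{-1}p$; so the map $(p,q)\mapsto(\alpha,\beta)$ is an isomorphism and the remaining integral is an amplitude operator in the variables $U=(x,\xi)$, $V=S(y,\eta)$ with $S^0_{0,0}$ amplitude, which reduces to a Weyl kernel $h_N(U,V)$ by the standard amplitude-to-Weyl reduction. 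One small bookkeeping point worth making explicit: after the linear change $(z,r)\mapsto\zeta$ the phase comes out as $e^{-2\pi i\,\zeta\cdot(U-V)}$, not $e^{2\pi i(U-V)\zeta}$; this is harmless (replace $\zeta\to-\zeta$) but should be noted. Otherwise the argument is complete and sound.
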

Therefore, the following auxiliary result holds. 
\begin{lemma}\label{lemma53FIO}
If $\phi,\varphi\in S_{0,0}^0(\rd)$ supported in two disjoint open cones in $\rdd\setminus\{0\}$ for large $X$, we have that for every $s\geq0$ the operator $P_s$ with Schwartz kernel
	\begin{equation}
		\tilde \kappa_s(X,Y)=\phi(X)\la X\ra^s \tilde{\kappa}(X,Y)\varphi(S(Y))
	\end{equation}
	is bounded on $L^2(\rd)$.
\end{lemma}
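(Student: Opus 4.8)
The plan is to mimic the second half of the proof of Lemma~\ref{lemma53}, substituting the FIO representation of Theorem~\ref{theo.K.FIO} for the $\Psi$DO representation \eqref{AWignerKerOpw}, and absorbing the twist $\varphi(S(Y))$ by a symplectic change of variables. Fix an integer $N$, to be taken large depending on $s$. By Theorem~\ref{theo.K.FIO}, writing $X=(x,\xi)$, $Y=(y,\eta)$ and
\[
v(X,Y):=\big(\xi-\Phi_x(x,\eta),\ y-\Phi_\eta(x,\eta)\big)\in\rdd,
\]
one has $\tilde\kappa(X,Y)=\la 2\pi v(X,Y)\ra^{-2N}\,h_N(X,S(Y))$, where $h_N$ is the Schwartz kernel of a Weyl operator with symbol $c_N\in S^0_{0,0}(\bR^{4n})$, i.e. $h_N(X,W)=\int_{\rdd}e^{2\pi i(X-W)\zeta}c_N\big(\tfrac{X+W}{2},\zeta\big)\,d\zeta$. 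The first, and essentially the only new, point is that the ``defect'' $v$ is a non-degenerate linear substitute for $X-S(Y)$: the quadratic phase \eqref{quadphase} generates the canonical map $S$, which means precisely that $v(X,Y)=0\iff X=S(Y)$; since $v\colon\bR^{4n}\to\rdd$ is linear and clearly surjective, its kernel is exactly the $2n$-dimensional graph $\{X=S(Y)\}$, whence $v(X,Y)=M\,(X-S(Y))$ for a matrix $M\in\GL(2n,\bR)$ (one finds $M=\begin{pmatrix}-CA^{-1}&I_n\\-A^{-1}&0_n\end{pmatrix}$, invertible because $\det A\neq0$, using the symplectic relations \eqref{Sp1}--\eqref{Sp3}). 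In particular $\la 2\pi v(X,Y)\ra\approx\la X-S(Y)\ra$.

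In the operator $P_sf(X)=\int_{\rdd}\tilde\kappa_s(X,Y)f(Y)\,dY$ I would then substitute $Z=S(Y)$. Since $\det S=1$, the pullback $R\colon f\mapsto f\circ S^{-1}$ is unitary on $L^2(\rdd)$, and a direct computation gives $P_sf=\tilde P_s(Rf)$, where
\[
\tilde P_s g(X)=\int_{\rdd}\!\int_{\rdd}e^{2\pi i(X-Z)\zeta}\,d_{s,N}(X,Z,\zeta)\,g(Z)\,dZ\,d\zeta,\qquad d_{s,N}(X,Z,\zeta):=\phi(X)\,\la X\ra^s\,\la 2\pi M(X-Z)\ra^{-2N}\,c_N\!\big(\tfrac{X+Z}{2},\zeta\big)\,\varphi(Z).
\]
It therefore suffices to show that $\tilde P_s$ is bounded on $L^2(\rdd)$.

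At this stage the argument coincides with the second half of the proof of Lemma~\ref{lemma53}. Since $\phi$ and $\varphi$ are supported, for large argument, in two disjoint open cones of $\rdd\setminus\{0\}$, on $\supp\phi\times\supp\varphi$ one has $\la X\ra\lesssim\la X-Z\ra$ (the analogue of \eqref{ineqImp}); combined with $\la 2\pi M(X-Z)\ra\approx\la X-Z\ra$ this yields
\[
\la X\ra^s\,\la 2\pi M(X-Z)\ra^{-2N}\ \lesssim\ \la X-Z\ra^{s-2N}\ \lesssim\ 1
\]
as soon as $N\ge s/2$, and the same uniform bound --- with only faster decay --- holds for every derivative $\partial_X^\alpha\partial_Z^\beta\partial_\zeta^\gamma d_{s,N}$; together with $c_N\in S^0_{0,0}(\bR^{4n})$ this gives $d_{s,N}\in S^0_{0,0}(\bR^{6n})$. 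By the Calderón--Vaillancourt theorem (Theorem~\ref{CVtheorem}, applied with $2n$ in place of $n$), $\tilde P_s$ is bounded on $L^2(\rdd)$, and consequently $\|P_sf\|_{L^2}=\|\tilde P_s(Rf)\|_{L^2}\lesssim\|Rf\|_{L^2}=\|f\|_{L^2}$, which is the claim.

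The main obstacle --- the only genuine departure from Lemma~\ref{lemma53} --- is the reduction carried out above: the Wigner kernel of a quadratic FIO, unlike that of a $\Psi$DO, is concentrated along the graph of $S$ rather than the diagonal, so one must first recognize that the decay factor in Theorem~\ref{theo.K.FIO} controls exactly $\la X-S(Y)\ra^{-2N}$ and then straighten that graph by the symplectic substitution $Z=S(Y)$, exploiting $\det S=1$ to stay inside $L^2(\rdd)$. The verification that $v(X,Y)=M(X-S(Y))$ with $M$ invertible is the one computation that must be done with care; everything after it is routine and runs exactly as in Lemma~\ref{lemma53}.
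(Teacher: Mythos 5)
Your proposal is correct and follows essentially the same strategy as the paper's proof: use the kernel representation of Theorem~\ref{theo.K.FIO}, recognize that the decay factor is comparable to $\la X-S(Y)\ra^{-2N}$, exploit the disjoint conic supports of $\phi$ and $\varphi$ via Peetre's inequality, and conclude with Calder\'on--Vaillancourt. The two places you are more explicit are both harmless improvements over the paper's write-up: you verify by direct computation (using the block identities for $S$ with $\det A\neq 0$) that $v(X,Y)=M(X-S(Y))$ with $M\in\GL(2n,\bR)$, where the paper instead invokes \cite[Lemma 6.1.4]{cordero2020time}; and you make the volume-preserving substitution $Z=S(Y)$ explicit, straightening the graph of $S$ so that the resulting amplitude $d_{s,N}$ sits cleanly in the Calder\'on--Vaillancourt form $\int e^{2\pi i(X-Z)\zeta}d_{s,N}(X,Z,\zeta)\,dZ\,d\zeta$ acting on $g=f\circ S^{-1}$, whereas the paper absorbs this step implicitly and writes the post-substitution exponential somewhat loosely.
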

\begin{proof}
To prove the Lemma we proceed as in Lemma \ref{lemma53},
using the property of the Wigner kernel expressed in Theorem \ref{theo.K.FIO} as follows.

\noindent For every $N \in \mathbb{N}$ we may write 
\[
\tilde \kappa_s(X,Y)=\phi(X)\la X\ra^s\frac{1}{\langle 2\pi(\xi-\Phi_x(x,\eta),y-\Phi_\eta(x,\eta))\rangle^{2N}}h_N(X,S(Y))\varphi(S(Y)),
\]
where $h_N$ is the Schwartz kernel of a pseudodifferential operator
$A_N=\mathrm{Op}^w(a_N)$.
Therefore
\[
\tilde{\kappa}_s(X,Y)= \int e^{i(X-Y)Z}b_{s,N}(X,Y,Z)dZ,
\]
where 
\[
b_{s,N}(X,Y,Z)=\phi(X)\la X\ra^s\frac{1}{\langle 2\pi(\xi-\Phi_x(x,\eta),y-\Phi_\eta(x,\eta))\rangle^{2N}}a_N((X+S(Y))/2,Z)\varphi(S(Y)).
\]
Let us write $S(Y)=(S_1(Y),S_2(Y))$. By applying \cite[Lemma 6.1.4]{cordero2020time} we have
\[
\begin{split}
\abs{b_{s,N}(X,Y,Z)} &\lesssim \abs{\phi(X)\la X\ra^s\frac{1}{\langle (\xi-S_2(Y),x-S_1(Y))\rangle^{2N}}a_N((X+S(Y))/2,Z)\varphi(Y)} \\
&= \abs{\phi(X)\la X\ra^s\frac{1}{\langle X-S(Y)\rangle^{2N}}a_N((X+S(Y))/2,Z)\varphi(S(Y))}.
\end{split}
\]
Hence, by using once more Peetre's inequality and the fact 
that $\mathrm{supp} \, \phi \cap \mathrm{supp} \, \varphi \subseteq B_R$ we get 
\[
\abs{b_{s,N}(X,Y,Z)} \lesssim \abs{\phi(X)\frac{\la (X-S(Y))\ra^s}{\langle (X-S(Y))\rangle^{2N}}a_N((X+S(Y))/2,Z)\varphi(S(Y))},
\]
and so, for $N \in \mathbb{N}$ sufficiently large,
\[
\abs{b_{s,N}(X,Y,Z)} \leq C,
\]
for some constant $C>0$. 
Analogously for every
$\alpha, \beta, \gamma \in \mathbb{N}_0^{2n}$ there exists $c_{\alpha,\beta,\gamma}>0$ such that
	\begin{equation}
	|\partial^\alpha_X\partial^\beta_Y\partial_Z^\gamma b_{s,N}(X,Y,Z)|\leq c_{\alpha,\beta,\gamma}, \qquad X,Y,Z\in\rdd
	\end{equation}
	 i.e., $b_{s,N}\in S^0_{0,0}(\bR^{2n})$. Thus, once again, the boundedness of $P_s$ on $L^2(\rd)$ follows by Theorem \ref{CVtheorem}.
	 
\end{proof}


We are ready to give the proof of Theorem \ref{theo.microFIO}.

\begin{proof}[Proof of Theorem \ref{theo.microFIO}]
The proof follows the same pattern of Theorem \ref{theo.microlocalitycA},
essentially by using the $L^2$ boundedness property of the operator $\tilde{\cK}$ 
of {Theorem \ref{theo.K.FIO}} and
Lemma \ref{lemma53FIO}.

\noindent To prove \eqref{microlocalityFIO} we need to show that
\[
Y_0 \notin WF_\mathrm{w}(f) \Longrightarrow X_0 \notin WF_\mathrm{w}(\mathcal{K}f),
\]
{where $X_0=S(Y_0)$. Assume that $Y_0\notin WF_\mathrm{w}(f)$.} By definition
there exists an open conic neighborhood $\Gamma_{Y_0}$ of $Y_0$ such that 
\begin{equation}\label{assump1FIO}
\int_{\Gamma_{Y_0}}\la Y \ra^{2s}\abs{Wf(Y)}^2dY <+\infty, \quad \forall s\geq 0.
\end{equation}
The goal is to prove that
\begin{equation}
\int_{\Gamma_{X_0}}\la X\ra^{2s}\abs{W(\mathcal{K}f)(X)}^2dX <+\infty, \quad \forall s\geq 0,
\end{equation}
for a suitable open conic neighborhood $\Gamma_{X_0}$ of $X_0$ (to be chosen later).
Recall that by \eqref{K.FIO} we know that 
\[
W(\mathcal{K}f)=\tilde{\mathcal{K}} Wf,
\]
with $\tilde{\mathcal{K}}: L^2(\mathbb{R}^{2n}) \rightarrow L^2(\mathbb{R}^{2n})$
linear bounded operator.
Let us consider an open conic neighborhood $\Lambda_{Y_0} \subseteq \Gamma_{Y_0}$ of $Y_0$ such that
the intersection with the unit sphere $\Lambda_{Y_0}\cap \mathbb{S}^{2n-1}$
is strictly contained in $\Gamma_{Y_0}\cap \mathbb{S}^{2n-1}$. Consider a smooth cut-off $\psi\in\mathcal{C}^\infty(\rdd)$, homogeneous of degree 0, such that $\psi|_{\Lambda_{Y_0}}\equiv1$, $0\leq\psi\leq1$. Then,
	\begin{align}
		 \int_{\Gamma_{X_0}}\la X\ra^{2s}\abs{W(\mathcal{K}f)(X)}^2dX&\lesssim \underbrace{\int_{\Gamma_{X_0}}\la X\ra^{2s}\abs{\tilde{\mathcal{K}}(\psi Wf)(X)}^2dX}_{=:I_1} \\
         &+ \underbrace{\int_{\Gamma_{X_0}}\la X \ra^{2s}\abs{\tilde{\mathcal{K}}((1-\psi) Wf)(X)}^2dX}_{=:I_2}.
	\end{align}
	Hence, since $\tilde{\kappa}(X,Y)=h(X,S(Y))$, where $h$ is the kernel of a pseudodifferential operator with symbol in $S_{0,0}^0(\mathbb{R}^{2n})$ (see \cite{cordero2023wigner}) we get
	\[
	I_1 \leq \| \tilde{\mathcal{K}}(\psi Wf)\|_{L^2_{v_s}} \lesssim \|\psi Wf\|_{L^2_{v_s}}<+\infty.
	\]
The estimate of {$I_2$} requires the introduction of a new cut-off function. 
Let $\Lambda_{X_0}'$ and $\Gamma_{X_0}'$ be open conic neighborhoods of $ X_0 $ with $\Gamma_{X_0}'\subseteq \Lambda_{X_0}'\subseteq S(\Lambda_{Y_0})\subseteq S(\Gamma_{Y_0})$.
Let $\phi\in\mathcal{C}^\infty(\rdd)$ be a smooth cut-off, homogeneous of degree 0,
with $0\leq\phi\leq1$, $\phi|_{\Gamma_{X_0}'}\equiv1$.
Therefore, since the assumptions of Lemma \ref{lemma53FIO} are satisfied
by $\phi$ and $\varphi=(1-\psi)\circ S^{-1}$, by repeating the same argument of Theorem \ref{theo.microlocalitycA} we get that $I_2 <+\infty$.
This concludes the proof of the Theorem.
\end{proof}
\begin{remark}
Note that, as for the pseudodifferential case (see Remark \ref{psimicros}), 
with the notation above, for all $s\in \mathbb{R}$ one has
\[
WF^s_\mathrm{w}(\mathcal{K}f) \subseteq S(WF_\mathrm{w}^s(f)).
\]
\end{remark}

\section{Cross Wigner microlocal analysis}\label{sec.cross}
\subsection{The cross $\cA$-Wigner wave front set}
In order to study \emph{the ghost frequencies}, or more broadly, the possible interaction with two signals,
we now consider the (cross) Wigner Wave front set for $f,g \in L^2(\mathbb{R}^n)$, defined in Definition \ref{def.crossWF}, as a measure of the interaction between $f$ and $g$ in terms of their symplectic correlation $W_\cA(f,g)$. For the rest of this section, $W_\cA$ is a fixed metaplectic Wigner distribution. 

The propagation of the joint phase space concentration of $f$ and $g$ in this context is preserved according to the following result.

\begin{proposition}
Let $a,b \in S_{0,0}^0(\mathbb{R}^{2n})$ and consider $f,g \in L^2(\mathbb{R}^{n})$. Then 
\[
WF_\cA(\Op^\w(a)f,\Op^\w(b)g) \subseteq WF_\cA(f,g).
\]
\end{proposition}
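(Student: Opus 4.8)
The plan is to reduce the statement to the already-proved microlocal inclusion for a single Weyl operator, Theorem~\ref{theo.microlocalitycA}, by producing one pseudodifferential operator with $S^0_{0,0}$-symbol that intertwines $W_\cA(f,g)$ with $W_\cA(\Op^\w(a)f,\Op^\w(b)g)$. Concretely, applying \eqref{CGR-f1} with the symbol $a$ in the first slot and then \eqref{CGR-f2} with the symbol $b$ in the second slot yields
\[
W_\cA(\Op^\w(a)f,\Op^\w(b)g)=\Op^\w(b_1)W_\cA(f,\Op^\w(b)g)=\Op^\w(b_1)\Op^\w(\tilde b_2)W_\cA(f,g),
\]
where $b_1=\sigma_a\circ\cA^{-1}\in S^0_{0,0}(\bR^{4n})$ and $\tilde b_2=\tilde\sigma_b\circ\cA^{-1}\in S^0_{0,0}(\bR^{4n})$ are the symbols provided by Proposition~\ref{propGCR1} associated with $a$ and $b$ respectively (here $\sigma_a(r,y,\rho,\eta)=a(r,\rho)$ and $\tilde\sigma_b(r,y,\rho,\eta)=\bar b(y,-\eta)$, in the notation of \eqref{sigmasigmatilde}). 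By the composition result Theorem~\ref{compweyl}, $\Op^\w(b_1)\Op^\w(\tilde b_2)=\Op^\w(c)$ for some $c\in S^0_{0,0}(\bR^{4n})$, so that $W_\cA(\Op^\w(a)f,\Op^\w(b)g)=\Op^\w(c)W_\cA(f,g)$; this is formally the situation treated in the proof of Theorem~\ref{theo.microlocalitycA}, with $W_\cA f$ replaced by $W_\cA(f,g)$.

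Next I would observe that, since $f,g\in L^2(\rd)$, Moyal's identity \eqref{Moyal} gives $W_\cA(f,g)\in L^2(\rdd)$ with $\norm{W_\cA(f,g)}=\norm{f}\,\norm{g}$; hence for $s\le0$ the membership $W_\cA(f,g)\in L^2_{v_s}(\Gamma)$ is automatic on every cone $\Gamma$, so the condition defining $WF_\cA(f,g)$ in Definition~\ref{def.crossWF} amounts to the existence of a single open cone $\Gamma_{X_0}$ with $W_\cA(f,g)\in L^2_{v_s}(\Gamma_{X_0})$ for all $s\ge0$ (and likewise for $\Op^\w(a)f,\Op^\w(b)g$). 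Fixing $X_0\notin WF_\cA(f,g)$ and such a $\Gamma_{X_0}$, I would then run the cut-off argument of Theorem~\ref{theo.microlocalitycA} verbatim, which is legitimate by Remark~\ref{rmk.stillholdscrosswigner}: pick nested conic neighborhoods $\Gamma_{X_0}'\subset\subset\Lambda_{X_0}'\subset\subset\Lambda_{X_0}\subset\subset\Gamma_{X_0}$ of $X_0$ and degree-zero cut-offs $\psi,\phi$ as there, write $\Op^\w(c)W_\cA(f,g)=\Op^\w(c)(\psi W_\cA(f,g))+\Op^\w(c)((1-\psi)W_\cA(f,g))$, estimate the $\la\cdot\ra^s$-weighted $L^2(\Gamma_{X_0}')$-norm of the first term by Proposition~\ref{propBddL2} together with the support of $\psi$, and estimate the second term using that $F\mapsto\phi\,\la\cdot\ra^{s}\Op^\w(c)((1-\psi)F)$ has Schwartz kernel $\phi(X)\la X\ra^{s}k(X,Y)(1-\psi)(Y)$ with $k$ the kernel of $\Op^\w(c)$ and $\phi,1-\psi$ supported in disjoint cones for large argument, hence is bounded on $L^2(\rdd)$ by the second part of Lemma~\ref{lemma53} (whose proof only uses $c\in S^0_{0,0}(\bR^{4n})$). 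Since $\Gamma_{X_0}'$ does not depend on $s$, this shows $X_0\notin WF_\cA(\Op^\w(a)f,\Op^\w(b)g)$, hence the claimed inclusion.

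I do not expect any genuine obstacle: the proof is a transcription of that of Theorem~\ref{theo.microlocalitycA}, the only bookkeeping points being that two distinct symbols $a,b$ occupy the two slots (harmless, since \eqref{CGR-f1} and \eqref{CGR-f2} are applied independently and Theorem~\ref{compweyl} keeps the composition inside $S^0_{0,0}$) and that $WF_\cA(f,g)$ is defined with $s$ ranging over all of $\bR$ (reduced to $s\ge0$ by $L^2$-membership of $W_\cA(f,g)$); in the final estimate the bound $\norm{f}^4$ of Theorem~\ref{theo.microlocalitycA} is simply replaced by $\norm{f}^2\norm{g}^2$.
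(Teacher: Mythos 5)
Your proposal is correct and follows essentially the same route as the paper: apply \eqref{CGR-f1} and \eqref{CGR-f2} (the paper cites this from \cite[Theorem 5.4 (ii)]{cordero2024wigner}, which is what Proposition~\ref{propGCR1} reproduces) to obtain $W_\cA(\Op^\w(a)f,\Op^\w(b)g)=\Op^\w(c)W_\cA(f,g)$ with $c\in S^0_{0,0}(\bR^{4n})$ via Theorem~\ref{compweyl}, then invoke the microlocalization argument of Theorem~\ref{theo.microlocalitycA} (cf.\ Remark~\ref{rmk.stillholdscrosswigner}). Your extra observation that the $s<0$ range in Definition~\ref{def.crossWF} is handled automatically by Moyal's identity is a useful explicit check that the paper leaves implicit.
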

\begin{proof}
By \cite[Theorem 5.4 (ii)]{cordero2024wigner} and Theorem \ref{compweyl} we have
\[
\begin{split}
W_\cA(\Op^\w(a)f,\Op^\w(b)g)&=\Op^\w(\tilde{a})\Op^\w(\tilde{b})W_\cA(f,g)=\Op^\w(c)W_\cA(f,g),
\end{split}
\]
with $\tilde{a},\tilde{b},c \in S_{0,0}^0(\R^{4n})$. Hence, the result follows as in Theorem \ref{theo.microlocalitycA}.
\end{proof}
\subsection{Wigner Microlocal Analysis of Schr\"odinger Interaction}
The goal of this section is to study \textit{the phase space concentration} due
to the interaction of two states that are Schr\"odinger evolutions of two different initial data. 
In the first place, it is fundamental to recall the Sch\"rodinger evolution induced by a \emph{real} quadratic form. Let $a(X)$ be a \textit{real-valued} quadratic form defined by
\[
a(X)=\langle X,QX \rangle_{\R^{2n}}, \quad X \in \R^{2n},
\]  
where $Q \in \bR^{2n\times2n}$ is a symmetric matrix. 
If we consider the evolution induced by $a(X)$ or, more precisely, by $\Op^\w(a)$, with initial datum $u_0 \in L^2(\R^n)$, namely 
\begin{equation}\label{eq.standardschrodinger}
\begin{cases}
i\partial_t u+\Op^\w(a)u=0 & \text{on $\R \times \R^n$},\\
u(0,\cdot)=u_0 & \text{on $\R^n$}
\end{cases}
\end{equation}
we have that the solution of such a problem at time $t\in\bR$ can be expressed as (see \cite{hormandersymplectic}) 
\[
u(t,\cdot)=e^{it\Op^\w(a)}u_0=\widehat{S_t}u_0,
\]
where $S_t \in \Sp(n,\R)$ is given by $S_t=e^{2tF}$, with $F$ the Hamilton map associated with $a$ (see \eqref{Hamiltonmap}).

Let us consider now two different {real} quadratic forms
\[
a_j(X)=\langle X,Q_j X \rangle_{\R^{2n}}, \quad X=(x,\xi) \in \R^{2n}
\]
where $Q_j \in \bR^{2n\times2n}$ is symmetric and let $\sigma_j \in S_{0,0}^0(\mathbb{R}^{2n})$, with $j=1,2$. 
More generally, we may fix $u_{0,1},u_{0,2} \in L^2(\mathbb{R}^n)$, and consider the (quantized) Hamiltonians
\[
H_j=\mathrm{Op}^\w(a_j)+\mathrm{Op}^\w(\sigma_j), \quad j=1,2,
\]
and the corresponding Cauchy problems \eqref{eq.cauchyj}. Their propagators are {\em generalized metaplectic operators}, see \cite{cordero2014generalized,cordero2015integral}, i.e., they can be written as 
\begin{equation}\label{eq.solrepform}
u_j(t,\cdot)=e^{itH_j}u_{0,j}=\widehat{S_{j,t}}\mathrm{Op}^\w(b_{j,t})u_{0,j}, \quad t\in\R,
\end{equation}
for suitable $\widehat{S_{j,t}} \in \Mp(n,\R)$, for $j=1,2$ and $b_{j,t} \in S^{0}_{0,0}(\R^{2n})$. In particular, the metaplectic operators $\widehat{S_{j,t}}$ have projections $S_{j,t}$ that describe the solution of the classical equations of motion with Hamiltonians $a_j(x,\xi)$ in phase-space.

As anticipated in the Introduction, there are two different cases that we may analyze: the case where $a_1=a_2$ and then the case where $a_1\neq a_2$. 
We start with $a_1=a_2$. By writing $S_t=S_{1,t}=S_{2,t}$, we have
\[
u_1(t,\cdot)=\widehat{S_t}\mathrm{Op}^\w(b_{1,t})u_{0,1}, \quad u_2(t,\cdot)=\widehat{S_t}\mathrm{Op}^\w(b_{2,t})u_{0,2}, \quad t\in\bR.
\]
We are ready to prove Theorem \ref{theo.crossa1equala2}.

\begin{proof}[Proof of Theorem \ref{theo.crossa1equala2}]
By abuse, we denote $u_j=u_j(t,\cdot)$, $j=1,2$. Let $X_0 \notin S_t \, WF_\w(u_{0,1},u_{0,2})$, we must prove that $X_0 \notin WF_\w(u_1,u_2)$. To get that, we first note that \eqref{eq.solrepform} entails
\[
W(u_1,u_2)=W(\widehat{S_t}\mathrm{Op}^\w(b_{1,t})u_{0,1},\widehat{S_t}\mathrm{Op}^\w(b_{2,t})u_{0,2}).
\]
Hence, by Proposition \ref{covpropWigner}, we have
\[
W(u_1,u_2)(X)=W(\mathrm{Op}^\w(b_{1,t})u_{0,1},\mathrm{Op}^\w(b_{2,t})u_{0,2})(S_t^{-1}(X)), \quad X \in \bR^{2n}.
\]
Moreover, by \eqref{CGR-f1} and \eqref{CGR-f2}, since $b_{1,t},b_{2,t} \in S_{0,0}^0(\bR^{2n})$, we have that there exist $\tilde{b}_{1,t},\tilde{b}_{2,t} \in S_{0,0}^0(\bR^{4n})$ such that 
\begin{equation}\label{eq.repformpert}
W(u_1,u_2)(X)=\Op^\w(\tilde{b}_{1,t})\Op^\w(\tilde{b}_{2,t})W(u_{0,1},u_{0,2})(S_t^{-1}(X)), \quad X \in \bR^{2n}.
\end{equation}
Thus, by Theorem \ref{compweyl} we have 
\[
W(u_1,u_2)(X)=\Op^\w(d_t)W(u_{0,1},u_{0,2})(S_t^{-1}(X)), \quad X \in \bR^{2n},
\]
where $d_t \in S_{0,0}^0(\R^{4n})$.

Now, assume that $ 0 \neq X_0\notin S_tWF_\w(u_{0,1},u_{0,2})$ and denote by $Y_0=S_t^{-1}(X_0)$. By definition there exists $\Gamma_{Y_0}$ conic neighborhood of $Y_0$ such that 
\[
\int_{\Gamma_{Y_0}}\langle Y \rangle^{2s} \abs{W(u_{0,1},u_{0,2})(Y)}^2dY <+\infty, \quad \forall s \geq 0.
\]
Our aim is to prove that,
\[
\int_{\Gamma_{X_0}}\langle X \rangle^{2s} \abs{W(u_1,u_2)(X)}^2dX <+\infty, \quad \forall s \geq 0,
\]
where $\Gamma_{X_0}$ is the conic neighborhood of $X_0$ defined by $\Gamma_{X_0}:=S_t\Gamma_{Y_0}$.
 
By the representation formula \eqref{eq.repformpert}, by applying the change of variables $X=S_t(Y)$ and using that $\langle Y \rangle \approx \langle S_t(Y) \rangle$ one has 
\[
\begin{split}
\int_{\Gamma_{X_0}}\langle X \rangle^{2s} \abs{W(u_1,u_2)(X)}^2dX &=\int_{\Gamma_{X_0}}\langle X \rangle^{2s} \abs{\Op^\w(d_t)W(u_{0,1},u_{0,2})(S_t^{-1}(X))}^2dX \\
&=\int_{\Gamma_{Y_0}}\langle S_t(Y) \rangle^{2s} \abs{\Op^\w(d_t)W(u_{0,1},u_{0,2})(Y)}^2dY\\
& \lesssim \int_{\Gamma_{Y_0}}\langle Y \rangle^{2s} \abs{\Op^\w(d_t)W(u_{0,1},u_{0,2})(Y)}^2dY, \quad \forall s \geq 0.
\end{split}
\]
The result follows using the same microlocalization argument of Theorems \ref{theo.microlocalitycA} (cf. Remark \ref{rmk.stillholdscrosswigner}).

\end{proof}


We need the following Proposition.
\begin{proposition}\label{prop.ghost}
Let $f,g \in L^2(\mathbb{R}^n)$. Then
\begin{equation}\label{eq.ghost}
WF_{\w}(f+g) \subseteq WF_{\w}(f) \cup WF_{\w}(g) \cup WF_{\w}(f,g).
\end{equation}
\end{proposition}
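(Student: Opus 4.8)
The plan is to exploit the sesquilinearity of the cross-Wigner distribution together with a finite intersection of conic neighborhoods. First I would expand, using the bilinear--antilinear structure of \eqref{defWigner-gg},
\[
W(f+g)=W(f+g,f+g)=Wf+Wg+W(f,g)+W(g,f),
\]
and record the elementary identity $W(g,f)(X)=\overline{W(f,g)(X)}$ for every $X\in\mathbb{R}^{2n}$ (change of variables $y\mapsto -y$ in the integral). In particular $|W(g,f)|=|W(f,g)|$ pointwise, so for any cone $\Gamma$ and any $s$, the condition $W(f,g)\in L^2_{v_s}(\Gamma)$ is equivalent to $W(g,f)\in L^2_{v_s}(\Gamma)$; hence $W(g,f)$ is controlled by the hypothesis $X_0\notin WF_\w(f,g)$ and does not require a separate assumption.

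Next I would argue on complements. Fix $0\neq X_0\notin WF_\w(f)\cup WF_\w(g)\cup WF_\w(f,g)$. By Definition \ref{defWFtau-intro} (with $\cA=\cA_{1/2}$) and Definition \ref{def.crossWF} there are open cones $\Gamma^{(1)},\Gamma^{(2)},\Gamma^{(3)}\subseteq\dot{\mathbb{R}}^{2n}$, each containing $X_0$, such that $Wf\in\bigcap_{s\geq0}L^2_{v_s}(\Gamma^{(1)})$, $Wg\in\bigcap_{s\geq0}L^2_{v_s}(\Gamma^{(2)})$ and $W(f,g)\in L^2_{v_s}(\Gamma^{(3)})$ for every $s\in\mathbb{R}$. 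I would then set $\Gamma:=\Gamma^{(1)}\cap\Gamma^{(2)}\cap\Gamma^{(3)}$, which is again an open conic neighborhood of $X_0$ (a finite intersection of such neighborhoods), and note that restricting to the smaller set $\Gamma$ only improves integrability, so all four terms $Wf$, $Wg$, $W(f,g)$, $W(g,f)$ lie in $L^2_{v_s}(\Gamma)$ for every $s\geq0$.

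Finally I would conclude by the triangle inequality: since $\chi_\Gamma W(f+g)=\chi_\Gamma Wf+\chi_\Gamma Wg+\chi_\Gamma W(f,g)+\chi_\Gamma W(g,f)$,
\[
\norm{W(f+g)}_{L^2_{v_s}(\Gamma)}\leq\norm{Wf}_{L^2_{v_s}(\Gamma)}+\norm{Wg}_{L^2_{v_s}(\Gamma)}+\norm{W(f,g)}_{L^2_{v_s}(\Gamma)}+\norm{W(g,f)}_{L^2_{v_s}(\Gamma)}<\infty
\]
for every $s\geq0$, whence $X_0\notin WF_\w(f+g)$. I do not expect a genuine obstacle here; the only points deserving a line of care are that a finite intersection of open conic neighborhoods of $X_0$ is again one, and the conjugation identity $W(g,f)=\overline{W(f,g)}$ that ties the mixed term to the cross wave front set. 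The same reasoning applies verbatim to $WF^s_\w$ for fixed $s\geq0$, and, using the sesquilinearity $W_\cA(f,g)=\hat\cA(f\otimes\bar g)$, to any metaplectic Wigner distribution $W_\cA$ in place of $W$.
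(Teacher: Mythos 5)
Your proof is correct and follows the same route as the paper's: expand $W(f+g)$ by sesquilinearity (the paper groups the mixed terms as $2\,\mathrm{Re}\,W(f,g)$, you write $W(f,g)+W(g,f)=W(f,g)+\overline{W(f,g)}$, which is the same thing), take the intersection $\Gamma=\Gamma^{(1)}\cap\Gamma^{(2)}\cap\Gamma^{(3)}$, and conclude by the triangle inequality. One caution on your closing aside: for a general metaplectic $W_\cA$ the conjugation identity $W_\cA(g,f)=\overline{W_\cA(f,g)}$ need not hold, so to repeat the argument verbatim the union on the right would need to include $WF_\cA(g,f)$ as well; for $\cA=\cA_{1/2}$ this extra set coincides with $WF_\w(f,g)$, which is precisely why the proposition is formulated for the classical Wigner distribution.
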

\begin{proof}
This follows from the fact that
\begin{equation}\label{PolarizationFormula}
W(f+g)=Wf+Wg+2\mathrm{Re} \, W(f,g).
\end{equation}
Indeed, if $0 \neq X_0 \notin WF_\w(f) \cup WF_\w(g) \cup WF_\w(f,g)$ there exist
$\Gamma_1,\Gamma_2, \Gamma_3$ conic neighborhoods of $X_0$ such that  
\[
\chi_{\Gamma_1}Wf, \; \chi_{\Gamma_2}Wg, \; \chi_{\Gamma_3}W(f,g) \in L_{v_s}^2(\mathbb{R}^{2n}), \quad \forall s \geq 0.
\]
Since $X_0\neq0$ is contained in $\Gamma:=\Gamma_1 \cap \Gamma_2 \cap \Gamma_3$, $\Gamma$ is an open cone, and by the polarization formula \eqref{PolarizationFormula}, we obtain that
\[
W(f+g)  \in L^2_{v_s}(\Gamma), \quad \forall s \geq 0.
\]
\end{proof}

The following consequence of Theorem \ref{theo.crossa1equala2} clarifies that when the same initial datum evolves under two distinct Schr\"odinger dynamics, their superposition does not produce any additional energy interaction, the energy distribution simply propagates according to the underlying canonical flow.
\begin{corollary}
Let $u_0 \in L^2(\R^n)$ and $t\in\bR$. Then, under the notation of Theorem \ref{theo.crossa1equala2} with $u_{0,1}=u_{0,2}=u_0$, one has
\[
WF_\w(u_1(t,\cdot)+u_2(t,\cdot)) \subseteq S_t \, WF_\w(u_0).
\]
\end{corollary}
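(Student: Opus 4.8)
The plan is to deduce the corollary directly from the ghost-frequency estimate of Proposition \ref{prop.ghost} together with Theorem \ref{theo.crossa1equala2}. First I would apply \eqref{eq.ghost} with $f=u_1(t,\cdot)$ and $g=u_2(t,\cdot)$, which reduces the claim to showing that each of the three sets
\[
WF_\w(u_1(t,\cdot)),\qquad WF_\w(u_2(t,\cdot)),\qquad WF_\w(u_1(t,\cdot),u_2(t,\cdot))
\]
is contained in $S_t\,WF_\w(u_0)$. Since in the present situation the two evolutions share the same quadratic Hamiltonian (the case $a_1=a_2$, with $S_t=S_{1,t}=S_{2,t}$), the cross term is handled at once by Theorem \ref{theo.crossa1equala2} applied with $u_{0,1}=u_{0,2}=u_0$: it gives $WF_\w(u_1(t,\cdot),u_2(t,\cdot))\subseteq S_t\,WF_\w(u_0,u_0)$.

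For the two diagonal terms I would use the elementary identity $W_\w(h,h)=W_\w h$, immediate from \eqref{defWigner-gg}, which yields $WF_\w(h)=WF_\w(h,h)$ for every $h\in L^2(\rd)$. Applying Theorem \ref{theo.crossa1equala2} once more, now with both evolutions taken equal to $u_j(t,\cdot)$ (that is, choosing $\sigma_1=\sigma_2$ and $u_{0,1}=u_{0,2}=u_0$), gives $WF_\w(u_j(t,\cdot))=WF_\w(u_j(t,\cdot),u_j(t,\cdot))\subseteq S_t\,WF_\w(u_0,u_0)$ for $j=1,2$. Since $WF_\w(u_0,u_0)=WF_\w(u_0)$ by the same identity, all three sets lie in $S_t\,WF_\w(u_0)$, and taking their union finishes the argument. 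Alternatively, the diagonal contributions can be obtained without invoking Theorem \ref{theo.crossa1equala2}: by the representation \eqref{eq.solrepform} one has $u_j(t,\cdot)=\widehat{S_t}\,\Op^\w(b_{j,t})u_0$ with $b_{j,t}\in S^0_{0,0}(\rdd)$, so Proposition \ref{covpropWigner} gives $WF_\w(u_j(t,\cdot))=S_t\,WF_\w(\Op^\w(b_{j,t})u_0)$ and Theorem \ref{theo.microlocalitycA} (with $\cA=\cA_{1/2}$) bounds the latter set by $S_t\,WF_\w(u_0)$.

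There is no genuine obstacle here: the statement is a short bookkeeping consequence of results already established. The only point deserving a moment of care is the passage between the diagonal wave front sets $WF_\w(u_j(t,\cdot))$ and the cross wave front sets occurring in Theorem \ref{theo.crossa1equala2}, that is, the identities $WF_\w(h)=WF_\w(h,h)$ and $WF_\w(u_0,u_0)=WF_\w(u_0)$; both follow instantly from the definitions of $W_\w$ and of the (cross) Wigner wave front set. The conceptual substance — that superposing two Schr\"odinger flows of one and the same initial datum creates no additional phase-space interaction, so that the energy simply propagates along the classical flow $S_t$ — is entirely carried by Theorem \ref{theo.crossa1equala2}.
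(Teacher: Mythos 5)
Your proof is correct and follows essentially the same route as the paper's one-line argument, which simply cites Proposition \ref{prop.ghost} together with Theorem \ref{theo.crossa1equala2}; you spell out what the paper leaves implicit, namely that the diagonal contributions $WF_\w(u_j(t,\cdot))$ are also controlled by applying Theorem \ref{theo.crossa1equala2} in the degenerate case $\sigma_1=\sigma_2$ (or, alternatively, via covariance and Theorem \ref{theo.microlocalitycA}), and that $WF_\w(h,h)=WF_\w(h)$.
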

\begin{proof}
The result follows directly by Proposition \ref{prop.ghost} and Theorem \ref{theo.crossa1equala2}.
\end{proof}

More generally, we now prove that the same results hold in the case of 
the $\cA$-Wigner wave front set, if $W_\cA$ is \emph{covariant}.
In this case, recall by Theorem \ref{thmCovSI}, that there exists $\Sigma \in \mathcal{S}'(\R^{2n})$ such that 
\[
W_\cA(f,g)=W(f,g) \ast \Sigma=:Q_\Sigma(f,g), \quad f,g \in \mathcal{S}(\mathbb{R}^n).
\]
Moreover, if we define $\Sigma_t(X):=\Sigma(S_t (X))$, we denote by $W_{\cA_t}=Q_{\Sigma_t}$ the corresponding Wigner distribution (cf. \cite{cordero2023wigner}).
Our generalization follows basically by the following lemma (see \cite[Lemma 7.1]{cordero2024wigner}).
\begin{lemma}\label{lemma.At}
Let $W_\cA$ be covariant. Let $a$ be a real quadratic form and let $S_t=e^{2tF}$, with $F$ the Hamilton map associated with $a$. 
Then 
\[
WF_\cA(\widehat{S_t}f,\widehat{S_t}g)(X)=WF_{\cA_t}(f,g)(S_t^{-1}(X)), \quad f,g \in L^2(\bR^n), \ \ X \in \mathbb{R}^{2n}.
\]
\end{lemma}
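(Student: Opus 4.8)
The plan is to establish the pointwise identity
\[
W_\cA(\widehat{S_t}f,\widehat{S_t}g)(X)=W_{\cA_t}(f,g)\bigl(S_t^{-1}X\bigr),\qquad X\in\rdd,
\]
and then transfer it to wave front sets using that, for each fixed $t$, $S_t\in\Sp(n,\bR)$ is a fixed linear isomorphism of $\rdd$ with $|\det S_t|=1$. The first step is to unwind covariance: by Theorem \ref{thmCovSI}, writing $\Sigma=\cF^{-1}\Phi_{-B_\cA}$ one has $W_\cA(f,g)=W(f,g)\ast\Sigma=Q_\Sigma(f,g)$, and, since $\Sigma_t=\Sigma\circ S_t$, also $W_{\cA_t}(f,g)=W(f,g)\ast\Sigma_t=Q_{\Sigma_t}(f,g)$. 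Both convolutions are genuine $L^2(\rdd)$ functions — consistently with Moyal's identity — and are most safely read on the Fourier side, where they amount to multiplication of $\widehat{W(f,g)}\in L^2$ by the bounded chirp $\Phi_{-B_\cA}$. The second step is Proposition \ref{covpropWigner} applied to the standard Wigner distribution: $W(\widehat{S_t}f,\widehat{S_t}g)=W(f,g)\circ S_t^{-1}$.

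The computation is then short. Since $|\det S_t|=1$, convolution commutes with pullback by $S_t^{-1}$: for $h\in L^2(\rdd)$,
\[
(h\circ S_t^{-1})\ast\Sigma=(h\circ S_t^{-1})\ast(\Sigma_t\circ S_t^{-1})=(h\ast\Sigma_t)\circ S_t^{-1}.
\]
Taking $h=W(f,g)$ and combining with the two previous identities yields $W_\cA(\widehat{S_t}f,\widehat{S_t}g)=(W(f,g)\ast\Sigma_t)\circ S_t^{-1}=W_{\cA_t}(f,g)\circ S_t^{-1}$, i.e.\ the desired pointwise identity (valid a.e., both sides being $L^2$ functions). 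Note that only covariance of $W_\cA$, and not shift-invertibility, enters here.

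The last step is to pass to wave front sets. Because $S_t^{-1}$ is a linear bijection commuting with positive dilations, it maps open cones bijectively onto open cones, and $\langle S_t^{-1}Y\rangle\approx\langle Y\rangle$ with constants depending only on $S_t$, hence $\langle S_t^{-1}Y\rangle^{s}\approx\langle Y\rangle^{s}$ for every $s$. Thus, for an open cone $\Gamma$ around $X_0\neq0$, the change of variables $X=S_tY$ (Jacobian $1$) shows that $\int_{\Gamma}\langle X\rangle^{2s}|W_\cA(\widehat{S_t}f,\widehat{S_t}g)(X)|^2\,dX$ is comparable to $\int_{S_t^{-1}\Gamma}\langle Y\rangle^{2s}|W_{\cA_t}(f,g)(Y)|^2\,dY$; finiteness for all $s$ of one is equivalent to finiteness for all $s$ of the other. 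Hence $X_0\notin WF_\cA(\widehat{S_t}f,\widehat{S_t}g)$ if and only if $S_t^{-1}X_0\notin WF_{\cA_t}(f,g)$, which is exactly $WF_\cA(\widehat{S_t}f,\widehat{S_t}g)=S_t\,WF_{\cA_t}(f,g)$. The only delicate point is the rigorous justification of the distributional manipulations with $\Sigma$ — convolving a tempered distribution with $L^2$ functions and commuting it with a linear pullback — when $f,g$ are merely in $L^2$; this is handled by passing to the Fourier transform, where $\Sigma$ becomes the bounded chirp $\Phi_{-B_\cA}$ and all operations are elementary.
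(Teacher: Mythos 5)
Your proof is correct, and it is worth noting that the paper does not actually prove this lemma: it is cited from Lemma~7.1 of the reference \cite{cordero2024wigner}, so there is no in-paper argument to compare against. You have correctly read the displayed identity as a pointwise equality of Wigner distributions, $W_\cA(\widehat{S_t}f,\widehat{S_t}g)(X)=W_{\cA_t}(f,g)(S_t^{-1}X)$ (the ``$WF$'' in the statement is evidently a slip, since one evaluates functions, not sets, at $X$; the way the lemma is used in the proof of Theorem~\ref{theo.crossa1equala2cA}, where it feeds directly into equation~\eqref{eq.repformpertcA}, confirms this reading). Your argument rests on exactly the three ingredients one would expect: Theorem~\ref{thmCovSI} giving $W_\cA(f,g)=W(f,g)\ast\Sigma$ with $\Sigma=\cF^{-1}\Phi_{-B_\cA}$, the symplectic covariance of the classical Wigner distribution (Proposition~\ref{covpropWigner}), and the elementary identity $(u\circ A)\ast(v\circ A)=(u\ast v)\circ A$ for $|\det A|=1$, applied with $A=S_t^{-1}$ and $v=\Sigma_t$. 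Passing to the Fourier side to justify the convolution manipulations, where $\Sigma$ becomes the bounded chirp $\Phi_{-B_\cA}$, is the right way to handle the fact that $f,g$ are only in $L^2$. The observation that only covariance (and not shift-invertibility) is needed is also correct and matches the hypotheses. Finally, the passage from the pointwise identity to the corresponding statement about wave front sets, using the unimodular change of variables $Y=S_t^{-1}X$ together with $\langle S_tY\rangle\approx\langle Y\rangle$ and the fact that $S_t^{-1}$ permutes open cones, is the standard argument used elsewhere in the paper (for instance in the proof of Theorem~\ref{theo.crossa1equala2}), and you have carried it out accurately.
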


\begin{theorem}\label{theo.crossa1equala2cA}
If $\hat\cA \in \Mp(2n,\bR)$ under the notation of Theorem \ref{theo.crossa1equala2} we have 
\[
WF_\cA(u_1(t,\cdot),u_2(t,\cdot)) \subseteq S_t WF_{\cA_t}(u_{0,1},u_{0,2}).
\]
\end{theorem}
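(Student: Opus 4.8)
The plan is to mimic the proof of Theorem \ref{theo.crossa1equala2}, replacing the covariance formula for the Wigner distribution (Proposition \ref{covpropWigner}) by its metaplectic counterpart, Lemma \ref{lemma.At}. Since $a_1=a_2$, the two underlying classical flows coincide, so, writing $S_t:=S_{1,t}=S_{2,t}$, the representation \eqref{eq.solrepform} gives
\[
u_1(t,\cdot)=\widehat{S_t}\,\Op^\w(b_{1,t})u_{0,1},\qquad u_2(t,\cdot)=\widehat{S_t}\,\Op^\w(b_{2,t})u_{0,2},
\]
with $b_{1,t},b_{2,t}\in S^0_{0,0}(\rdd)$. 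The idea is to peel off the metaplectic factor $\widehat{S_t}$ first, paying the price of replacing $W_\cA$ by $W_{\cA_t}$ and transforming phase space by $S_t$, and then to absorb the two bounded pseudodifferential factors using the microlocality of Weyl operators for the cross wave front set.

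Concretely, I would first apply Lemma \ref{lemma.At} to the pair $\big(\Op^\w(b_{1,t})u_{0,1},\,\Op^\w(b_{2,t})u_{0,2}\big)\in L^2(\rd)\times L^2(\rd)$, obtaining
\[
WF_\cA\big(u_1(t,\cdot),u_2(t,\cdot)\big)=S_t\,WF_{\cA_t}\big(\Op^\w(b_{1,t})u_{0,1},\,\Op^\w(b_{2,t})u_{0,2}\big),
\]
where $S_t(\cdot)$ denotes the image of a conic set under the linear symplectic isomorphism $S_t$. Next I would invoke the Proposition on Weyl microlocality for the cross wave front set stated at the beginning of Section~\ref{sec.cross}, namely $WF_\cB(\Op^\w(a)f,\Op^\w(b)g)\subseteq WF_\cB(f,g)$ for an arbitrary metaplectic Wigner distribution $W_\cB$ and symbols $a,b\in S^0_{0,0}(\rdd)$, applied here with $\cB=\cA_t$, symbols $b_{1,t},b_{2,t}$ and states $u_{0,1},u_{0,2}$. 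This is legitimate because $W_{\cA_t}=Q_{\Sigma_t}$, with $\Sigma_t(X)=\Sigma(S_tX)$, is again a \emph{covariant} metaplectic Wigner distribution, as recalled before Lemma \ref{lemma.At}. It yields
\[
WF_{\cA_t}\big(\Op^\w(b_{1,t})u_{0,1},\,\Op^\w(b_{2,t})u_{0,2}\big)\subseteq WF_{\cA_t}(u_{0,1},u_{0,2}).
\]
Since $Y\mapsto S_tY$ preserves inclusions among conic sets, combining the two displays gives $WF_\cA(u_1(t,\cdot),u_2(t,\cdot))\subseteq S_t\,WF_{\cA_t}(u_{0,1},u_{0,2})$, which is the assertion.

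I do not anticipate a genuine obstacle: once Lemma \ref{lemma.At} and the Weyl-microlocality Proposition are in hand, the argument is a two-step assembly, exactly parallel to the case $a_1=a_2$ treated in Theorem \ref{theo.crossa1equala2}, with Lemma \ref{lemma.At} playing the role that Proposition \ref{covpropWigner} played there. The only point deserving a word of care is checking that $W_{\cA_t}$ indeed satisfies the hypotheses of the Weyl-microlocality Proposition --- that is, that $\cA_t$ is symplectic and $W_{\cA_t}$ covariant, which follows from Theorem \ref{thmCovSI} together with $S_t\in\Sp(n,\bR)$ --- and noting that, consistently with Lemma \ref{lemma.At}, the statement implicitly presupposes $W_\cA$ covariant.
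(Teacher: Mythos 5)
Your proposal is correct and takes essentially the same route as the paper: Lemma \ref{lemma.At} to peel off the metaplectic factor $\widehat{S_t}$ (at the cost of replacing $\cA$ by $\cA_t$ and twisting by $S_t^{-1}$), then microlocality of Weyl operators with $S^0_{0,0}$ symbols to absorb the bounded perturbations. The only cosmetic difference is in the second step: the paper writes out the representation formula $W_\cA(u_1,u_2)(X)=\Op^\w(d'_t)W_{\cA_t}(u_{0,1},u_{0,2})(S_t^{-1}X)$ via Proposition \ref{propGCR1} and Theorem \ref{compweyl} and then reruns the cone/cut-off argument of Theorem \ref{theo.crossa1equala2}, whereas you cite the cross-$\cA$-Wigner microlocality Proposition at the start of Section \ref{sec.cross} as a black box — slightly more modular, same underlying computation. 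Two small points worth flagging for precision: Lemma \ref{lemma.At} as printed is an identity between Wigner distributions evaluated at $X$ (the ``$WF$'' in it is a typo), so passing from it to the set-theoretic equality $WF_\cA(\widehat{S_t}f,\widehat{S_t}g)=S_t\,WF_{\cA_t}(f,g)$ still requires the standard observation that $S_t$ maps open cones to open cones and that $\langle S_t^{-1}X\rangle\approx\langle X\rangle$; and, as you rightly note, the statement of Theorem \ref{theo.crossa1equala2cA} implicitly assumes $W_\cA$ covariant, since $\cA_t$ is only defined in that regime and Lemma \ref{lemma.At} needs it.
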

\begin{proof}
By Lemma \ref{lemma.At} and Proposition \ref{propGCR1} we have that 
\begin{equation}\label{eq.repformpertcA}
W_\cA(u_1,u_2)(X)=\Op^\w(\tilde{b'}_{1,t})\Op^\w(\tilde{b'}_{2,t})W_{\cA_t}(u_{0,1},u_{0,2})(S_t^{-1}(X)), \quad X \in \bR^{2n}.
\end{equation}
Thus, again by Theorem \ref{compweyl} we have (with $d'_t \in S_{0,0}^0(\R^{4n})$) 
\[
W_\cA(u_1,u_2)(X)=\Op^\w(d'_t)W_{\cA_t}(u_{0,1},u_{0,2})(S_t^{-1}(X)), \quad X \in \bR^{2n}.
\]
Therefore, the result follows as in Theorem \ref{theo.crossa1equala2}.
\end{proof}

Finally, we consider the case of two distinct quadratic forms $a_1 \neq a_2$ and denote by $u_1$ and $u_2$ the corresponding solutions to \eqref{eq.cauchyj}. We prove Theorem \ref{theo.crossa1neqa2}, which essentially says that the phase space concentration of the interaction between $u_1$ and $u_2$ can be expressed in terms of the phase space concentration of the initial datum $u_0$ and a metaplectic operator that depends on the Hamilton maps $F_1$ and $F_2$  associated with $a_1$ and $a_2$, respectively. Note also that this result is stated for general metaplectic Wigner distributions, without the assumptions of covariance. 
\begin{proof}[Proof of Theorem \ref{theo.crossa1neqa2}]
In this case, by Proposition \ref{propGC2} and Proposition \ref{propGC1}, we have
\[
\begin{split}
W_\cA(u_1(t,\cdot),u_2(t,\cdot))&=\hat\cA(u_1(t,\cdot) \otimes \overline{u_2(t,\cdot)})\\
&=\hat\cA(\widehat{S_{1,t}}\mathrm{Op}^\w(b_{1,t})u_{0,1} \otimes \overline{\widehat{S_{2,t}}\mathrm{Op}^\w(b_{2,t})u_{0,2}})\\
&=\hat\cA(\widehat{S_{1,t}}\mathrm{Op}^\w(b_{1,t})u_{0,1} \otimes \widehat{\overline{S_{2,t}}} \mathrm{Op}^\w(b_{2,t}'')\overline{u_{0,2}}) \\
&=\hat\cA\widehat{\cS_t}\mathrm{Op}^\w(b_t)(u_{0,1} \otimes \overline{u_{0,2}}),
\end{split}
\] 
where $\widehat\cB_t=\widehat{S_{1,t}} \otimes \widehat{\overline{S_{2,t}}}$, $b''_{2,t} \in S_{0,0}^0(\R^{2n})$ and $b_t \in S_{0,0}^0(\R^{4n})$.
Now, using the interwining between the Weyl quantization and metaplectic operators \eqref{intertOpwMetap} we get 
\[
W_\cA(u_1,u_2)=\mathrm{Op}^{\mathrm{w}}(b_t \circ (\cA\cS_t)^{-1}) W_{\cC_t}(u_{0,1},u_{0,2}).
\]
Therefore, once again, the result follows as in Theorem \ref{theo.crossa1equala2}.
\end{proof}
\begin{remark}
Once again, all the microlocal inclusions that we proved in the previous theorem hold at any stratum, that is by replacing the $WF_\cA$ with the $WF_\cA^s$, for $s \geq 0$. 
\end{remark}

\begin{remark}
Note that if the initial data $u_{0,1}, u_{0,2} \in \mathcal{S}(\mathbb{R}^n)$, then for all 
$t\in \R$ we have
\[
WF_{\mathcal A}\big(u_1(t,\cdot),u_2(t,\cdot)\big)=\emptyset.
\]
\end{remark}

Recall the definition of shift-invertibility in Definition \ref{def292} and the definition of $E_{\mathcal{A}}$ in \eqref{EA}. By \cite[Proposition 4.4]{cordero2023characterization}, if $W_\cA$ is covariant then $W_{\cA_t}$ originating the wavefront $W_{\cA_t}$ in Theorem \ref{theo.crossa1equala2cA} is also covariant for every $t\in\bR$. Here, we prove that also the shift-invertibility of $W_{\mathcal{A}}$ pass to $\mathcal{C}_t$ for every $t$. This simplifies considerably the computation of $W_{\mathcal{C}_t}$, as exemplified in Example \ref{Exbelow} below.

\begin{theorem}\label{thmGG58}
	Under the notation of Theorem \ref{theo.crossa1neqa2}, if $W_{\cA}$ is a shift-invertible metaplectic Wigner distribution, then $W_{\mathcal{C}_t}$ is shift-invertible for every $t\in\bR$ with $E_{\mathcal{C}_t}=E_{\mathcal{A}}S_{1,t}$.
\end{theorem}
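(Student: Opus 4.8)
The plan is to reduce the statement to a single block computation. Recall from \eqref{EA} that shift-invertibility of a metaplectic Wigner distribution $W_\cB$, with projection $\cB\in\Sp(2n,\bR)$ having $n\times n$ blocks $(B_{ij})_{i,j=1}^4$ as in \eqref{blockA}, means exactly that $E_\cB=\begin{pmatrix}B_{11}&B_{13}\\B_{21}&B_{23}\end{pmatrix}\in\GL(2n,\bR)$. Hence it suffices to prove the identity $E_{\cC_t}=E_\cA S_{1,t}$ and then observe that the right-hand side is invertible.

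First I would fix notation, writing $S_{j,t}=\begin{pmatrix}A_j&B_j\\C_j&D_j\end{pmatrix}$ for $j=1,2$ (suppressing the subscript $t$) and letting $\cA$ have $n\times n$ blocks $(A_{ij})_{i,j=1}^4$. By Proposition \ref{propGC2} one has $\overline{S_{2,t}}=\begin{pmatrix}A_2&-B_2\\-C_2&D_2\end{pmatrix}$, so by Proposition \ref{propGC1} the $4n\times 4n$ symplectic matrix $\cS_t=S_{1,t}\otimes\overline{S_{2,t}}$ has the block form
\[
\cS_t=\left(\begin{array}{cc|cc}A_1&0_n&B_1&0_n\\0_n&A_2&0_n&-B_2\\\hline C_1&0_n&D_1&0_n\\0_n&-C_2&0_n&D_2\end{array}\right).
\]
The crucial observation is that the first and third block-columns of $\cS_t$ are supported only on block-rows $1$ and $3$, and there they carry exactly the blocks of $S_{1,t}$ — the $\overline{S_{2,t}}$-data (including its sign flips) lives entirely in block-columns $2$ and $4$ and is therefore irrelevant for the computation of $E_{\cC_t}$.

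Next I would carry out the block multiplication $\cC_t=\cA\cS_t$ for the four blocks in block-rows $1,2$ and block-columns $1,3$, obtaining
\[
(\cC_t)_{11}=A_{11}A_1+A_{13}C_1,\qquad (\cC_t)_{13}=A_{11}B_1+A_{13}D_1,
\]
\[
(\cC_t)_{21}=A_{21}A_1+A_{23}C_1,\qquad (\cC_t)_{23}=A_{21}B_1+A_{23}D_1,
\]
so that
\[
E_{\cC_t}=\begin{pmatrix}(\cC_t)_{11}&(\cC_t)_{13}\\(\cC_t)_{21}&(\cC_t)_{23}\end{pmatrix}=\begin{pmatrix}A_{11}&A_{13}\\A_{21}&A_{23}\end{pmatrix}\begin{pmatrix}A_1&B_1\\C_1&D_1\end{pmatrix}=E_\cA\,S_{1,t}.
\]

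Finally, since $W_\cA$ is shift-invertible, $E_\cA\in\GL(2n,\bR)$, and $S_{1,t}\in\Sp(n,\bR)\subset\GL(2n,\bR)$; hence $E_{\cC_t}=E_\cA S_{1,t}$ is a product of invertible $2n\times 2n$ matrices and therefore invertible, which is precisely shift-invertibility of $W_{\cC_t}$, with the asserted formula for $E_{\cC_t}$. I do not expect any genuine obstacle: the only point demanding care is the bookkeeping of block indices and the verification that the conjugated factor $\overline{S_{2,t}}$ does not enter $E_{\cC_t}$, which is immediate from the sparsity pattern of the metaplectic tensor product in Proposition \ref{propGC1}.
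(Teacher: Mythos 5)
Your proof is correct, and the block computation checks out: the sparsity pattern of $\cS_t=S_{1,t}\otimes\overline{S_{2,t}}$ in block-columns $1$ and $3$ carries only the $S_{1,t}$ data, so $E_{\cC_t}=E_\cA S_{1,t}$ follows at once. The paper reaches the same identity by a slightly different route: rather than multiplying $\cA\cS_t$ block by block and extracting the $(1,1),(1,3),(2,1),(2,3)$ entries directly, it right-multiplies by the permutation matrix $P_+$ from \eqref{originaldefPpm}, which makes $E_\cA$ literally the top-left $2n\times 2n$ block of $\cA P_+$, and uses the conjugation $P_+\cS_t P_+=\diag(S_{1,t},\overline{S_{2,t}})$ to turn the tensor product into a genuine block-diagonal, after which the product is read off in one step. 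Both arguments encode exactly the same arithmetic; the paper's version bundles the index bookkeeping into a permutation conjugation that is reusable and makes the ``$\overline{S_{2,t}}$ drops out'' phenomenon visible structurally, while yours is more elementary and self-contained, requiring nothing beyond the explicit tensor-product block form in Proposition \ref{propGC1} and Proposition \ref{propGC2}. Either way the final step is the same: $E_\cA\in\GL(2n,\bR)$ by shift-invertibility of $W_\cA$ and $S_{1,t}\in\Sp(n,\bR)\subset\GL(2n,\bR)$, so $E_{\cC_t}$ is invertible.
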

\begin{proof}
	We argue at the level of symplectic projections. Let $\cA=\pi^{Mp}(\hat\cA)$ and $\mathcal{C}_t=S_{1,t}\otimes\overline{S_{2,t}}$, as in Theorem \ref{theo.crossa1neqa2}. Let us consider the permutation $P_+$ defined in \eqref{originaldefPpm}. The right-multiplication of $\cA$ by $P_+$, i.e., $\cA P_+$ swaps the two central columns and rows of $\cA$
	\begin{equation}\label{conjAPpiu}
		\cA P_+=\begin{pmatrix}
			E_{\cA} & \ast\\
			\ast & \ast
		\end{pmatrix},
	\end{equation}
	thereby highlighting the submatrix $E_\cA$ defining shift-invertibility, here the $\ast$ denote the other $2n\times 2n$ blocks of $\cA P_+$, that do not play any role in this proof. 
	Moreover, the conjugation action of $P_+$ on $S_{1,t}\otimes \overline{S_{2,t}}$ gives
	\begin{equation}
		P_+(S_{1,t}\otimes \overline{S_{2,t}})P_+=\diag(S_{1,t},\overline{S_{2,t}}).
	\end{equation}
	Therefore,
	\begin{align}
		\mathcal{C}_t&=\cA\mathcal{S}_t=\cA(S_{1,t}\otimes \overline{S_{2,t}})=\cA P_+\diag(S_{1,t},\overline{S_{2,t}})P_+=\begin{pmatrix}
			E_{\cA} & \ast\\
			\ast & \ast
		\end{pmatrix}
		\begin{pmatrix}
			S_{1,t} & 0_{2d}\\
			0_{2d} & \overline{S_{2,t}}
		\end{pmatrix}P_+\\
		&=\begin{pmatrix}
			E_{\cA}S_{1,t} & \ast\\
			\ast & \ast
		\end{pmatrix}P_+.
	\end{align}
	That is, $E_{\mathcal{C}_t}=E_{\cA}S_{1,t}$. Since $S_{1,t}\in\GL(2n,\bR)$, the assertion follows.
\end{proof}

We conclude this section by applying these results to study the Schrödinger evolution of the concentration properties of the Wigner distribution for the following simple and interesting models.

\begin{example}
In dimension $n=1$, we consider the free particle given by the Schr\"odinger evolution
\begin{equation}\label{eq.freepart}
	\begin{cases}
		i\frac{1}{2\pi}\partial_tu=-\frac{1}{4\pi^2}\Delta u,\\
		u(0,\cdot)=u_0\in L^2(\bR).
	\end{cases}
\end{equation}
In this case, propagator is the metaplectic operators $\widehat{S_{t}}$ with projection
\begin{equation}
	S_{t}=\begin{pmatrix}
		1 & 2t\\
		0 & 1
	\end{pmatrix}.
\end{equation}
Therefore, denoting by $u(t,\cdot)=e^{it\frac{1}{2\pi}\Delta}u_0=\widehat{S_t}u_0$ the solution of \eqref{eq.freepart}, for all $t \in \R$, yields 
\begin{equation}
WF_\w(u(t,\cdot))\subseteq \lbrace (x+2t\xi,\xi); \ (x,\xi) \in WF_\w(u_0)\rbrace.
\end{equation}
\end{example}

\begin{example}
If we consider, the Schr\"odinger evolution induced by the harmonic oscillator (again in $n=1$)
\begin{equation}\label{eq.harosc}
	\begin{cases}
		i\frac{1}{2\pi}\partial_tu=-\frac{1}{8\pi^2}\left(\Delta -x^2\right)u,\\
		u(0,\cdot)=u_0\in L^2(\bR),
	\end{cases}
\end{equation}
since in this case 
\begin{equation}
S_{t}=\begin{pmatrix}
		\cos(t) & \sin(t)\\
		-\sin(t) & \cos(t)
	\end{pmatrix},
\end{equation}
denoting $u(t,\cdot)$ the solution of \eqref{eq.harosc} at time $t \geq 0$, we get that, for all $t \geq 0$,
\begin{equation}
WF_\w(u(t,\cdot))\subseteq \lbrace (\cos(t)x+\sin(t)\xi,-\sin(t)x+\cos(t)\xi); \ (x,\xi) \in WF_\w(u_0)\rbrace.
\end{equation}
\end{example}

\begin{remark}
Let us point out that the inclusions given in the previous two examples, by Theorem \ref{theo.crossa1equala2} still hold for the cross Wigner wave front set of $u_1,u_2$, where $u_1, u_2$ are the solution of the Schr\"odinger equation \eqref{eq.freepart} (or \eqref{eq.harosc}) corresponding to the Hamiltonians $H_j=\frac{1}{4\pi^2}\mathrm{Op}^\w(a)+\mathrm{Op}^\w(\sigma_j)$, with $a=\xi^2$ (or $a=(x^2+\xi^2)/2$) and $\sigma_j \in S^{0}_{0,0}(\R^{2n})$, for $j=1,2$.
\end{remark}

\begin{example}\label{Exbelow}
In dimension $n=1$, we consider the evolution under two Schr\"odinger equations in the form \eqref{eq.standardschrodinger} of two initial data $u_{0,1},u_{0,2}\in L^2(\bR)$. Specifically, we consider the free particle
\begin{equation}
	\begin{cases}
		i\frac{1}{2\pi}\partial_tu_1=-\frac{1}{8\pi^2}\Delta u_1,\\
		u_1(0,x)=u_{0,1}(x),
	\end{cases}
\end{equation}
and the harmonic oscillator
\begin{equation}
	\begin{cases}
		i\frac{1}{2\pi}\partial_tu_2=-\frac{1}{8\pi^2}\left(\Delta -x^2\right)u_2,\\
		u_2(0,x)=u_{0,2}(x).
	\end{cases}
\end{equation}
Again, the propagators for these equations are the metaplectic operators $\widehat{S_{1,t}},\widehat{S_{2,t}}$ with projections
\begin{equation}
	S_{1,t}=\begin{pmatrix}
		1 & 2t\\
		0 & 1
	\end{pmatrix}\qquad \text{and} \qquad 
	S_{2,t}=\begin{pmatrix}
		\cos(t) & \sin(t)\\
		-\sin(t) & \cos(t)
	\end{pmatrix},
\end{equation}
respectively \cite{cordero2025sparse}. Let us consider the Wigner wave front of $u_1$ and $u_2$, whose evolution is governed by the matrix $\mathcal{C}_t$ in Theorem \ref{theo.crossa1neqa2}. Explicitly,
\begin{equation}
	\mathcal{C}_t=\begin{pmatrix}
		1/2 & \cos(t)/2 & 0 & \sin(t)/2\\
		t & \sin(t)/2 & 1/2 & -\cos(t)/2\\
		2t & -\sin(t) & 1 & \cos(t)\\
		-1 & \cos(t) & 0 & \sin(t)
	\end{pmatrix}.
\end{equation}
The metaplectic Wigner distribution associated to $\mathcal{C}_t$ is, up to a phase, given by 
\begin{equation}\label{defCtGG1}
	W_{\mathcal{C}_t}(u_{0,1},u_{0,2})(x,\xi)=W(u_{0,1},\widehat{\delta_{\mathcal{C}_t}}u_{0,2})(x,\xi-2tx),
\end{equation}
where
\begin{equation}
	\widehat{\delta_{\mathcal{C}_t}}u_{0,2}(y)=\begin{cases}
	\frac{1}{|\sin(t)|^{1/2}}e^{-i\pi[2t+\cot(t)]y^2}\int_{-\infty}^\infty e^{i\pi\cot(t)s^2}u_{0,2}(s)e^{2\pi isy/\sin(t)}ds, & \text{if $t\neq 2k\pi$, $k\in\bZ$},\\
	u_{0,2}((-1)^{k+1}y) & \text{if $t=2k\pi$, $k\in\bZ$}.
	\end{cases}
\end{equation}
To compute $W_{\mathcal{C}_t}$ we used that $\mathcal{C}_t$ is shift-invertible for every $t\in\bR$ by Theorem \ref{thmGG58}. Then, \eqref{defCtGG1} follows by \cite[Corollary 4.4]{cordero2024metaplectic} and \cite[Lemma 3.6]{cordero2024unified}, where $W_{\mathcal{C}_t}$ is computed in terms of the short-time Fourier transform and of the Wigner distribution.
\end{example}

\section{Conclusion: a bridge between Wigner and Gabor wave front set}\label{sec:conclusion}
The theory developed in this work heavily relies on the assumption $f\in L^2(\rd)$ in Definition \ref{defWFtau-intro}, because of which we have been able to define $WF_\cA(f)$ for any metaplectic Wigner distribution $W_\cA$. This is due to the fact that $W_\cA f\in \mathcal{C}(\rdd)$ for every $f\in L^2(\rd)$, and it makes sense to consider the integrals
\begin{equation}
	\int_{\Gamma}\la X\ra^{ps}|W_\cA f(X)|^pdX
\end{equation} 
for every $p>0$, $s\in\bR$ and open cone $\Gamma$. On the other hand, the classical microlocal analysis via the Gabor wave front $WF_G(f)$ can be formulated for $f\in\cS'(\rd)$, as it is given in terms of the STFT $V_gf$, for a window $g\in\cS(\rd)\setminus\{0\}$, typically a Gaussian, see Definition \ref{def.WFG}. This definition is well-posed thanks to the smoothing effect of $g$, which guarantees $V_gf\in\mathcal{C}(\rdd)\cap\cS'(\rdd)$. 

In Wigner analysis, we are interested in considering $Wf$, thus avoiding resorting to an additional $g$, both because of the physical meaning of $Wf$ and to get information on the ghost-frequencies, that would be otherwise suppressed by the smoothing effect of the window. However, regardless of ghost frequencies, we may be still interested in obtaining information on the order of growth or decay of $W_\cA f$ (or on the $L^2$ integrability) along cones, also when $f\in\cS'(\rd)$. Reasonably, for fixed $p>0$, we may define $\widetilde{WF_\cA}(f)$ for $f\in\cS'(\rd)$ by replacing condition \eqref{condWFA2} with
\begin{equation}\label{condWFA}
W_\cA f\ast\Phi \in \bigcap_{s \geq 0} L^p_{v_s}(\Gamma_{X_0}),
\end{equation}
where, for instance, $\Phi(X)=2^{n/2}e^{-2\pi|X|^2}=Wg(X)$, being $g(y)=e^{-\pi|y|^2}$. For $p=2$, condition \eqref{condWFA} is clearly not equivalent to \eqref{condWFA2} and the question arises of what kind of analysis we would get with such a correction. The answer is easy to give. In the case of the classical Wigner distribution, i.e., $\widetilde{WF_\cA}=\widetilde{WF_\w}$, the Gaussian filtering $Wf\ast \Phi$ yields the so-called Husimi distribution \cite{Husimi}:
\begin{equation}
	Wf\ast\Phi (X)= |V_gf(X)|^2,
\end{equation}
thereby retrieving $\widetilde{WF_\w}=WF_G$, for $p=1$. In conclusion, the Wigner analysis of wave fronts introduced by Cordero and Rodino intersects Gabor wave front sets when extended to tempered distribution by Gaussian smoothing. This strengthens Wigner analysis as a parallel strategy for the study of evolution operators, propagation of singularities and energy concentration. By working on $Wf$, the gap left by the absence of a window function is filled by the physical interpretability of the Wigner distribution $Wf$ as a quasi-probability distribution, together with its ability to reveal finer microlocal features otherwise suppressed by windowed transforms. At the same time, we do not move far from the classical framework: the familiar Gabor setting and its classical results are recovered through a simple Gaussian convolution.

\section*{Acknowledgements}
The authors thank prof. Luigi Rodino for reading the manuscript and providing significant suggestions and improvements.
The authors are affiliated to the Gruppo Nazionale per Analisi Matematica, la Probabilità e le loro Applicazioni (GNAMPA). Gianluca Giacchi is also supported by the SNSF starting grant ``Multiresolution methods for unstructured data” (TMSGI2 211684).

\bibliographystyle{abbrv}

\begin{thebibliography}{10}

\bibitem{Bai2012}
R.-F. Bai, B.-Z. Li, and Q.-Y. Cheng.
\newblock Wigner-{V}ille distribution associated with the linear canonical
  transform.
\newblock {\em J. Appl. Math.}, 2012(1):740161, 2012.

\bibitem{BastianoniDecay}
F.~Bastianoni and E.~Cordero.
\newblock Characterization of smooth symbol classes by {G}abor matrix decay.
\newblock {\em J. Fourier Anal. Appl.}, 28(1):Paper No. 3, 20, 2022.

\bibitem{MR2432025}
C.~Bastos, O.~Bertolami, N.~C. Dias, and J.~a.~N. Prata.
\newblock Weyl-{W}igner formulation of noncommutative quantum mechanics.
\newblock {\em J. Math. Phys.}, 49(7):072101, 24, 2008.

\bibitem{MR2718929}
C.~Bastos, N.~C. Dias, and J.~N. Prata.
\newblock Wigner measures in noncommutative quantum mechanics.
\newblock {\em Comm. Math. Phys.}, 299(3):709--740, 2010.

\bibitem{bayer2020linear}
D.~Bayer, E.~Cordero, K.~Gr{\"o}chenig, and S.~I. Trapasso.
\newblock Linear perturbations of the {W}igner transform and the {W}eyl
  quantization.
\newblock {\em Advances in Microlocal and Time-Frequency Analysis}, pages
  79--120, 2020.

\bibitem{MR4182445}
D.~Bayer, E.~Cordero, K.~Gr\"{o}chenig, and S.~I. Trapasso.
\newblock Linear perturbations of the {W}igner transform and the {W}eyl
  quantization.
\newblock In {\em Advances in microlocal and time-frequency analysis}, Appl.
  Numer. Harmon. Anal., pages 79--120. Birkh\"{a}user/Springer, Cham, [2020]
  \copyright 2020.

\bibitem{MR4286055}
A.~B\'{e}nyi and K.~A. Okoudjou.
\newblock {\em Modulation spaces---with applications to pseudodifferential
  operators and nonlinear {S}chr\"{o}dinger equations}.
\newblock Applied and Numerical Harmonic Analysis. Birkh\"{a}user/Springer, New
  York, [2020] \copyright 2020.

\bibitem{rodino_book}
P.~Boggiatto, E.~Buzano, and L.~Rodino.
\newblock {\em {Global Hypoellipticity and Spectral Theory}}.
\newblock Number~92 in Math Research. Akademie-Verlag, Berlin, 1996.

\bibitem{boggiatto2010time}
P.~Boggiatto, G.~De~Donno, and A.~Oliaro.
\newblock Time-frequency representations of {W}igner type and
  pseudo-differential operators.
\newblock {\em Trans. American Math. Soc.}, 362(9):4955--4981, 2010.

\bibitem{CalderonVaillancourt}
A.~P. Calder{\'o}n and R.~Vaillancourt.
\newblock A class of bounded pseudo-differential operators.
\newblock {\em Proc. Natl. Acad. Sci. U.S.A.}, 69(5):1185--1187, 1972.

\bibitem{cappiellohanarmonic}
M.~Cappiello, L.~Rodino, and P.~Wahlberg.
\newblock Propagation of singularities for anharmonic {S}chr\"odinger
  equations.
\newblock {\em J. Math. Phys.}, 66(4):Paper No. 041503, 33, 2025.

\bibitem{cappielloFios}
M.~Cappiello, R.~Schulz, and P.~Wahlberg.
\newblock Shubin type {F}ourier integral operators and evolution equations.
\newblock {\em J. Pseudo-Differ. Oper. Appl.}, 11(1):119--139, 2020.

\bibitem{MR3571901}
E.~Cordero, M.~de~Gosson, and F.~Nicola.
\newblock Time-frequency analysis of {B}orn-{J}ordan pseudodifferential
  operators.
\newblock {\em J. Funct. Anal.}, 272(2):577--598, 2017.

\bibitem{cordero2023symplectic}
E.~Cordero and G.~Giacchi.
\newblock Symplectic analysis of time-frequency spaces.
\newblock {\em J. Math. Pures Appl.}, 177:154--177, 2023.

\bibitem{cordero2024excursus}
E.~Cordero and G.~Giacchi.
\newblock Excursus on modulation spaces via metaplectic operators and related
  time-frequency representations.
\newblock {\em Sampling Theory, Signal Processing, and Data Analysis}, 22(1):9,
  2024.

\bibitem{cordero2024metaplectic}
E.~Cordero and G.~Giacchi.
\newblock Metaplectic {G}abor frames and symplectic analysis of time-frequency
  spaces.
\newblock {\em Appl. Comput. Harmon. Anal.}, 68:101594, 2024.

\bibitem{cordero2024understanding}
E.~Cordero, G.~Giacchi, and E.~Pucci.
\newblock Understanding of linear operators through {W}igner analysis.
\newblock {\em J. Math. Anal. Appl.}, 543(1):128955, 2025.

\bibitem{cordero2025sparse}
E.~Cordero, G.~Giacchi, E.~Pucci, and S.~I. Trapasso.
\newblock {Sparse Gabor representations of metaplectic operators: controlled
  exponential decay and Schr\"odinger confinement}.
\newblock {\em arXiv preprint arXiv:2508.02226}, 2025.

\bibitem{cordero2023wigner}
E.~Cordero, G.~Giacchi, and L.~Rodino.
\newblock Wigner representation of {S}chr\"odinger propagators.
\newblock {\em arXiv preprint arXiv:2311.18383}, 2023.

\bibitem{cordero2024wigner}
E.~Cordero, G.~Giacchi, and L.~Rodino.
\newblock Wigner analysis of operators. {P}art {I}{I}: {S}chr\"odinger
  equations.
\newblock {\em Comm. Math. Phys.}, 405(7):156, 2024.

\bibitem{cordero2025wigner}
E.~Cordero, G.~Giacchi, and L.~Rodino.
\newblock Wigner analysis of operators. {P}art {I}{I}{I}: Controlling ghost
  frequencies.
\newblock {\em arXiv preprint arXiv:2412.01960}, 2024.

\bibitem{cordero2024unified}
E.~Cordero, G.~Giacchi, and L.~Rodino.
\newblock A unified approach to time-frequency representations and generalized
  spectrogram.
\newblock {\em J. Fourier Anal. Appl.}, 31(9), 2025.

\bibitem{cordero2014generalized}
E.~Cordero, K.~Gr\"{o}chenig, F.~Nicola, and L.~Rodino.
\newblock Generalized metaplectic operators and the {S}chr\"{o}dinger equation
  with a potential in the {S}j\"{o}strand class.
\newblock {\em J. Math. Phys.}, 55(8):081506, 17, 2014.

\bibitem{cordero2010time}
E.~Cordero, F.~Nicola, and L.~Rodino.
\newblock Time-frequency analysis of {F}ourier integral operators.
\newblock {\em Commun. Pure Appl. Anal.}, 9(1):1--21, 2010.

\bibitem{MR3391896}
E.~Cordero, F.~Nicola, and L.~Rodino.
\newblock Gabor representations of evolution operators.
\newblock {\em Trans. Amer. Math. Soc.}, 367(11):7639--7663, 2015.

\bibitem{cordero2015integral}
E.~Cordero, F.~Nicola, and L.~Rodino.
\newblock Integral representations for the class of generalized metaplectic
  operators.
\newblock {\em J. Fourier Anal. Appl.}, 21(4):694--714, 2015.

\bibitem{MR3392649}
E.~Cordero, F.~Nicola, and L.~Rodino.
\newblock Schr\"{o}dinger equations with rough {H}amiltonians.
\newblock {\em Discrete Contin. Dyn. Syst.}, 35(10):4805--4821, 2015.

\bibitem{cordero2020time}
E.~Cordero and L.~Rodino.
\newblock {\em {T}ime-{F}requency {A}nalysis of {O}perators}, volume~75 of {\em
  De Gruyter Studies in Mathematics}.
\newblock De Gruyter, Berlin, [2020] \copyright 2020.

\bibitem{cordero2022wigner}
E.~Cordero and L.~Rodino.
\newblock Wigner analysis of operators. {P}art {I}: Pseudodifferential
  operators and wave fronts.
\newblock {\em Appl. Comput. Harmon. Anal.}, 58:85--123, 2022.

\bibitem{cordero2023characterization}
E.~Cordero and L.~Rodino.
\newblock Characterization of modulation spaces by symplectic representations
  and applications to {S}chr\"odinger equations.
\newblock {\em J. Funct. Anal.}, 284(9):109892, 2023.

\bibitem{cordero2020linear}
E.~Cordero and S.~I. Trapasso.
\newblock Linear perturbations of the {W}igner distribution and the {C}ohen
  class.
\newblock {\em Anal. Appl. (Singap.)}, 18(3):385--422, 2020.

\bibitem{MR3471302}
S.~Coriasco, K.~Johansson, and J.~Toft.
\newblock Global wave-front properties for {F}ourier integral operators and
  hyperbolic problems.
\newblock {\em J. Fourier Anal. Appl.}, 22(2):285--333, 2016.

\bibitem{MR3643624}
M.~de~Gosson.
\newblock {\em The {W}igner {T}ransform}.
\newblock Advanced Textbooks in Mathematics. World Scientific Publishing Co.
  Pte. Ltd., Hackensack, NJ, 2017.

\bibitem{de2021quantum}
M.~A. de~Gosson.
\newblock {\em Quantum Harmonic Analysis: An Introduction}, volume~4.
\newblock Walter de Gruyter GmbH \& Co KG, 2021.

\bibitem{MR2843220}
N.~C. Dias, M.~de~Gosson, F.~Luef, and J.~a.~N. Prata.
\newblock A pseudo-differential calculus on non-standard symplectic space;
  spectral and regularity results in modulation spaces.
\newblock {\em J. Math. Pures Appl. (9)}, 96(5):423--445, 2011.

\bibitem{PrataDias2018}
N.~C. Dias and J.~a.~N. Prata.
\newblock Wigner functions on non-standard symplectic vector spaces.
\newblock {\em J. Math. Phys.}, 59(1):012108, 27, 2018.

\bibitem{folland1989harmonic}
G.~B. Folland.
\newblock {\em Harmonic {A}nalysis in {P}hase {S}pace}.
\newblock Princeton {U}niversity {P}ress, 1989.

\bibitem{grochenig2013foundations}
K.~Gr\"{o}chenig.
\newblock {\em {Foundations of Time-Frequency Analysis}}.
\newblock Applied and Numerical Harmonic Analysis. Birkh\"{a}user Boston, Inc.,
  Boston, MA, 2001.

\bibitem{MR2294795}
K.~Gr\"{o}chenig.
\newblock Time-frequency analysis of {S}j\"{o}strand's class.
\newblock {\em Rev. Mat. Iberoam.}, 22(2):703--724, 2006.

\bibitem{grochenig2024benedicks}
K.~Gr{\"o}chenig and I.~Shafkulovska.
\newblock Benedicks-type uncertainty principle for metaplectic time-frequency
  representations.
\newblock {\em arXiv preprint arXiv:2405.12112}, 2024.

\bibitem{grochenig2025characterization}
K.~Gr{\"o}chenig and I.~Shafkulovska.
\newblock A characterization of metaplectic time-frequency representations.
\newblock {\em arXiv preprint arXiv:2505.03218}, 2025.

\bibitem{grochenig2025more}
K.~Gr{\"o}chenig and I.~Shafkulovska.
\newblock More uncertainty principles for metaplectic time-frequency
  representations.
\newblock {\em arXiv preprint arXiv:2503.13324}, 2025.

\bibitem{MR2356420}
A.~Holst, J.~Toft, and P.~Wahlberg.
\newblock Weyl product algebras and modulation spaces.
\newblock {\em J. Funct. Anal.}, 251(2):463--491, 2007.

\bibitem{hormander1495quadratic}
L.~H\"ormander.
\newblock Quadratic hyperbolic operators.
\newblock {\em Microlocal analysis and applications (Montecatini Terme, 1989),
  Lecture Notes in Math}, 1495:118--160, 1989.

\bibitem{hormandersymplectic}
L.~H\"ormander.
\newblock Symplectic classification of quadratic forms, and general {M}ehler
  formulas.
\newblock {\em Math. Z.}, 219(3):413--449, 1995.

\bibitem{hormander1}
L.~H\"ormander.
\newblock {\em The {A}nalysis of {L}inear {P}artial {D}ifferential {O}perators
  {I}}.
\newblock Classics in Mathematics. Springer, Berlin, Heidelberg, 2003.

\bibitem{hormander3}
L.~H\"ormander.
\newblock {\em The {A}nalysis of {L}inear {P}artial {D}ifferential {O}perators
  {I}{I}{I}}.
\newblock Classics in Mathematics. Springer, Berlin, Heidelberg, 2007.

\bibitem{Husimi}
K.~Husimi.
\newblock Some formal properties of the density matrix.
\newblock {\em Phys. Math. Soc. Jpn., 3rd series}, 22(4):264--314, 1940.

\bibitem{janssen1985bilinear}
A.~J. E.~M. Janssen.
\newblock Bilinear phase‐plane distribution functions and positivity.
\newblock {\em J. Math. Phys.}, 26(8):1986--1994, 08 1985.

\bibitem{Kirkhood1933}
J.~G. Kirkwood.
\newblock Quantum statistics of almost classical assemblies.
\newblock {\em Phys. Rev.}, 44:31--37, Jul 1933.

\bibitem{Minh1}
T.~M. Lai.
\newblock Modified ambiguity function and {W}igner distribution associated with
  quadratic-phase {F}ourier transform.
\newblock {\em J. Fourier Anal. Appl.}, 30(1):Paper No. 6, 31, 2024.

\bibitem{lernermetrics}
N.~Lerner.
\newblock {\em Metrics on the phase space and non-selfadjoint
  pseudo-differential operators}, volume~3 of {\em Pseudo-Differential
  Operators. Theory and Applications}.
\newblock Birkh\"auser Verlag, Basel, 2010.

\bibitem{MPT2025}
M.~Malagutti, A.~Parmeggiani, and D.~Tramontana.
\newblock {Propagation of Shubin--Sobolev singularities of Weyl-quantizations
  of complex quadratic forms}.
\newblock {\em Math. Ann.}, 2025.

\bibitem{Minh:2024aa}
L.~T. Minh.
\newblock Novel two-dimensional {W}igner distribution and ambiguity function in
  the framework of the two-dimensional nonseparable linear canonical transform.
\newblock {\em Multidimens. Syst. Signal Process.}, 35(1):11--35, 2024.

\bibitem{Pei2001}
S.-C. Pei and J.-J. Ding.
\newblock Relations between fractional operations and time-frequency
  distributions, and their applications.
\newblock {\em IEEE Trans Signal Process.}, 49(8):1638--1655, 2001.

\bibitem{PravdaRodinoWahlberg}
K.~Pravda-Starov, L.~Rodino, and P.~Wahlberg.
\newblock Propagation of {G}abor singularities for {S}chr\"odinger equations
  with quadratic {H}amiltonians.
\newblock {\em Math. Nachr.}, 291(1):128--159, 2018.

\bibitem{Rihaczek1968}
A.~Rihaczek.
\newblock Signal energy distribution in time and frequency.
\newblock {\em IEEE Trans. Inf. Theor.}, 14(3):369--374, 1968.

\bibitem{cordero2023metaplectic}
Y.~D. Sergeyev, D.~E. Kvasov, and A.~Astorino, editors.
\newblock {\em Metaplectic Gabor Frames of Wigner-Decomposable Distributions},
  Cham, 2025. Springer Nature Switzerland.

\bibitem{MR1266757}
J.~Sj\"{o}strand.
\newblock An algebra of pseudodifferential operators.
\newblock {\em Math. Res. Lett.}, 1(2):185--192, 1994.

\bibitem{MR2208883}
D.~Tataru.
\newblock Phase space transforms and microlocal analysis.
\newblock In {\em Phase space analysis of partial differential equations.
  {V}ol. {II}}, Pubbl. Cent. Ric. Mat. Ennio Giorgi, pages 505--524. Scuola
  Norm. Sup., Pisa, 2004.

\bibitem{Toft:2004aa}
J.~Toft.
\newblock Continuity properties for modulation spaces, with applications to
  pseudo-differential calculus, {II}.
\newblock {\em Ann. Glob. Anal. Geom.}, 26(1):73--106, 2004.

\bibitem{wahlberg2018propagation}
P.~Wahlberg.
\newblock Propagation of polynomial phase space singularities for
  {S}chr\"{o}dinger equations with quadratic {H}amiltonians.
\newblock {\em Math. Scand.}, 122(1):107--140, 2018.

\bibitem{MR4182467}
P.~Wahlberg.
\newblock The {G}abor wave front set of compactly supported distributions.
\newblock In {\em Advances in microlocal and time-frequency analysis}, Appl.
  Numer. Harmon. Anal., pages 507--520. Birkh\"{a}user/Springer, Cham, [2020]
  \copyright 2020.

\bibitem{wigner1932quantum}
E.~Wigner.
\newblock On the quantum correction for thermodynamic equilibrium.
\newblock {\em Phys. Rev.}, 40(5):749, 1932.

\bibitem{ZhangLCWD}
Z.~Zhang.
\newblock Linear canonical {W}igner distribution based noisy {L}{F}{M} signals
  detection through the output {S}{N}{R} improvement analysis.
\newblock {\em IEEE Trans Signal Process.}, 67(21):5527--5542, 2019.

\bibitem{zhang2019uncertainty}
Z.~Zhang.
\newblock Uncertainty principle for real functions in free metaplectic
  transformation domains.
\newblock {\em J. Fourier Anal. Appl.}, 25(6):2899--2922, 2019.

\bibitem{ZHANG2023108846}
Z.~Zhang and Y.~He.
\newblock Wigner distribution associated with the symplectic coordinates
  transformation.
\newblock {\em Signal Process.}, 204:108846, 2023.

\bibitem{ZhangNewWigner}
Z.~Zhang and M.~Luo.
\newblock New integral transforms for generalizing the {W}igner distribution
  and ambiguity function.
\newblock {\em IEEE Signal Process. Lett.}, 22(4):460--464, 2015.

\bibitem{zhang2021scaled}
Z.-C. Zhang, X.~Jiang, S.-Z. Qiang, A.~Sun, Z.-Y. Liang, X.-Y. Shi, and A.-Y.
  Wu.
\newblock Scaled {W}igner distribution using fractional instantaneous
  autocorrelation.
\newblock {\em Optik}, 237:166691, 2021.

\end{thebibliography}

\end{document}